\pdfoutput=1
\documentclass{amsart}

\usepackage[mathscr]{eucal}
\usepackage{amssymb}
\usepackage[usenames,dvipsnames]{xcolor} 
\usepackage[normalem]{ulem}
\usepackage{wasysym}
\usepackage{amsthm}
\usepackage{bbold}
\usepackage{comment}
\usepackage{enumitem}
\usepackage{amsmath}
\usepackage{tikz-cd}
\usepackage{stmaryrd} 
\usepackage{etoolbox} 
\usepackage{microtype}
\usepackage{adjustbox}
\usepackage{mathtools}
\usepackage{lipsum}
\usepackage[unicode]{hyperref} 

\definecolor{dark-red}{rgb}{0.5,0.15,0.15}
\definecolor{dark-blue}{rgb}{0.15,0.15,0.6}
\definecolor{dark-green}{rgb}{0.15,0.6,0.15}

\hypersetup{
    colorlinks, linkcolor=OliveGreen,
    citecolor=OliveGreen, urlcolor=OliveGreen
}

\usepackage[nameinlink,capitalise,noabbrev]{cleveref}

\numberwithin{equation}{section}
\usepackage[all]{xy}
\xyoption{line}
\usepackage{graphicx}
\usepackage{caption}

\newtheorem{thmx}{Theorem}

\newtheorem{Thm}[equation]{Theorem}
\newtheorem*{Thm*}{Theorem}
\newtheorem*{MainThm*}{Main Theorem}
\newtheorem{Prop}[equation]{Proposition}
\newtheorem{Lem}[equation]{Lemma}
\newtheorem{Cor}[equation]{Corollary}

\newtheorem{question}[equation]{Question}

\newtheorem*{Que*}{Question}

\theoremstyle{remark}
\newtheorem{Def}[equation]{Definition}

\newtheorem{Not}[equation]{Notation}
\newtheorem{Exa}[equation]{Example}

\newtheorem{Cons}[equation]{Construction}
\newtheorem{Conv}[equation]{Convention}
\newtheorem{Hyp}[equation]{Hypothesis}
\newtheorem{Rec}[equation]{Recollection}

\newtheorem{Rem}[equation]{Remark}

\tikzset{
    labelrotatebelow/.style={anchor=north, rotate=90, inner sep=1.0mm}
}
\tikzset{
    labelrotateabove/.style={anchor=south, rotate=90, inner sep=1.0mm}
}
\SelectTips{cm}{10}

\newcommand{\nc}{\newcommand}
\nc{\dmo}{\DeclareMathOperator}

\usepackage{todonotes}

\nc{\overbar}[1]{\mkern 1.5mu\overline{\mkern-1.5mu#1\mkern-1.5mu}\mkern 1.5mu}
\nc{\WCat}{\mathcal{W}\mathcal{C}\mathrm{at}_\infty}
\nc{\TCat}{\mathcal{S}\mathcal{C}\mathrm{at}_\infty}

\nc{\uMod}{\underline{\mathrm{Mod}}}
\nc{\umod}{\underline{\mathrm{mod}}}
\nc{\kappaaux}{g}
\nc{\kappaCh}{{\kappaaux(\cat C_h)}}
\nc{\kappam}{{\kappaaux({\mathfrak m})}}
\nc{\kappaP}{\Gamma_{\cat P}\unit}
\nc{\kappaQ}{{\kappaaux(\cat Q)}}
\nc{\kappaCP}{{\kappaaux_{\cat C}(\cat P)}}
\nc{\kappaDP}{{\kappaaux_{\cat D}(\cat P)}}
\nc{\kappaCQ}{{\kappaaux_{\cat C}(\cat Q)}}
\nc{\kappaDQ}{{\kappaaux_{\cat D}(\cat Q)}}
\nc{\kappaphiB}{{\kappaaux(\phi(\cat B))}}
\nc{\kappaphiQ}{{\kappaaux(\varphi(\cat Q))}}

\dmo{\Sub}{Sub}
\dmo{\Proj}{Proj}
\dmo{\LMod}{LMod}
\dmo{\cell}{cell}
\nc{\Prst}{{\cat P}\mathrm{r^{st}}}
\nc{\Mack}[2]{\mathrm{Mack}_{#1}(#2)}
\dmo{\fin}{{fin}}
\dmo{\Sphere}{\mathbb{S}}
\dmo{\Alg}{Alg}
\dmo{\CAlg}{CAlg}
\nc{\HA}{{\rmH \hspace{-0.2em}\bbA}}
\nc{\HZ}{{\rmH \hspace{-0.2em}\bbZ}}
\nc{\HZbar}{{\rmH \hspace{-0.2em}\underline{\bbZ}}}
\nc{\Fp}{{\bbF_{\hspace{-0.1em}p}}}
\nc{\HFp}{{\rmH \hspace{-0.15em}\bbF_{\hspace{-0.1em}p}}}

\nc{\mathfrakp}{\mathfrak{p}}
\nc{\mathfrakq}{\mathfrak{q}}
\nc{\mathfrakS}{\mathfrak{S}}
\nc{\mathfrakT}{\mathfrak{T}}
\nc{\Z}{\mathbb{Z}}
\nc{\cF}{\mathcal{F}}
\nc{\hspec}[1]{\Spc^\mathrm{h}({#1})}

\nc{\DPerm}{\mathbf{DPerm}}

\dmo{\Id}{Id}
\dmo{\Loc}{Loc}
\dmo{\Spc}{Spc}
\dmo{\aSpc}{aSpc}
\dmo{\thick}{thick}
\nc{\thickt}[1]{\thick_\otimes\langle #1 \rangle}
\nc{\radthickt}[1]{\thick_\otimes^{\sqrt{}}\langle #1 \rangle}
\nc{\Loct}[1]{\Loc_\otimes\langle #1 \rangle}
\nc{\Thickt}[1]{\mathrm{Thick}_\otimes\langle #1 \rangle}
\dmo{\perf}{perf}
\dmo{\End}{End}
\dmo{\Mor}{Mor}
\dmo{\Hom}{Hom}
\dmo{\id}{id}
\dmo{\im}{im}
\dmo{\Ker}{Ker}
\dmo{\ind}{ind}
\dmo{\Ind}{Ind}
\dmo{\CoInd}{coind}
\dmo{\res}{res}
\dmo{\infl}{infl}
\dmo{\triv}{triv}
\dmo{\Tel}{Tel} 
\dmo{\grMod}{grMod}%
\dmo{\Mod}{Mod}%
\dmo{\opname}{op}
\dmo{\SH}{\mathcal{S}\mathcal{H}}
\dmo{\smallb}{b}
\dmo{\Spec}{Spec}
\dmo{\MaxSpec}{MaxSpec}
\dmo{\supp}{supp}
\dmo{\Supp}{Supp}
\dmo{\cosupp}{cosupp}
\dmo{\Cosupp}{Cosupp}
\dmo{\hsupp}{hsupp}
\dmo{\Pinj}{Pinj}
\dmo{\Inj}{Inj}
\renewcommand{\mod}{\mathrm{mod}}
\dmo{\StMod}{StMod}
\dmo{\stmod}{stmod}
\dmo{\topp}{top}
\dmo{\spec}{spec}
\dmo{\ev}{ev}

\nc{\bbL}{\mathbb{L}}
\nc{\bbA}{\mathbb{A}}
\nc{\bbE}{\mathbb{E}}
\nc{\bbN}{\mathbb{N}}
\nc{\bbQ}{\mathbb{Q}}
\nc{\bbZ}{\mathbb{Z}}
\nc{\bbF}{\mathbb{F}}
\nc{\bbS}{\mathbb{S}}
\nc{\cat}[1]{\mathscr{#1}}

\nc{\cA}{\mathcal{A}}
\nc{\cB}{\mathcal{B}}
\nc{\cC}{\mathcal{C}}
\nc{\cE}{\mathcal{E}}
\nc{\cU}{\mathcal{U}}

\nc{\CB}{\mathbf{B}}

\nc{\sD}{\mathsf{D}}

\nc{\ihom}{{\underline{\hom}}}
\nc{\iHom}{\mathcal{H}\mathrm{om}}

\nc{\Mid}{\,\big|\,}
\nc{\SET}[2]{\big\{\,#1\Mid#2\,\big\}}
\nc{\unit}{\mathbb{1}}
\nc{\xra}{\xrightarrow}

\dmo{\Sp}{Sp}
\dmo{\Ho}{Ho}
\dmo{\Fin}{Fin}
\dmo{\add}{add}
\dmo{\Fun}{Fun}
\dmo{\Ext}{Ext}

\dmo{\Map}{Map}
\dmo{\Span}{Span}
\dmo{\N}{N}
\dmo{\Cat}{Cat}
\dmo{\colim}{colim}
\dmo{\hocolim}{hocolim}
\dmo{\Ch}{Ch}

\dmo{\rep}{rep}
\dmo{\Rep}{Rep}
\nc{\uRep}{\underline{\Rep}}
\nc{\urep}{\underline{\rep}}
\nc{\ustmod}{\underline{\stmod}}
\nc{\uStMod}{\underline{\StMod}}
\nc{\uDPerm}{\underline{\mathrm{DPerm}}}
\nc{\uDperm}{\underline{\mathrm{Dperm}}}
\dmo{\Ab}{Ab}
\dmo{\Set}{Set}
\dmo{\Spcl}{Spcl}
\nc{\Funadd}{\Fun_{\add}}
\dmo{\proj}{proj}
\dmo{\f}{fth}
\dmo{\cof}{cof}
\dmo{\deftensor}{Def^\otimes}
\dmo{\defcoid}{Def^{coid}}
\dmo{\Grp}{Grp}

\dmo{\dual}{dual}

\dmo{\Perf}{Perf}
\dmo{\tel}{tel}

\dmo{\rk}{rk}

\dmo{\glo}{glo}
\dmo{\Pic}{Pic}
\dmo{\gl}{gl}
\nc{\fan}[1]{\mathscr{F}_{#1}}
\dmo{\GL}{GL}
\dmo{\fib}{fib}
\dmo{\Out}{Out}
\dmo{\Fan}{Fan}
\dmo{\cons}{cons}
\dmo{\Bor}{Bor}

\newcommand{\Orb}{\mathrm{Orb}}

\newcommand{\PrL}{\mathrm{Pr}^{\mathrm{L}}_{\mathrm{st}}}
\dmo{\Ar}{Ar}

\newcounter{enum-resume-hack}
\Crefname{Thm}{Theorem}{Theorems}
\Crefname{Prop}{Proposition}{Propositions}
\Crefname{Lem}{Lemma}{Lemmas}
\Crefname{thmx}{Theorem}{Theorems}

\keywords{Permutation modules, derived categories, descent}
\subjclass[2020]{18F99; 20C07, 20C12, 16E05, 55U35}
\thanks{}

\begin{document}
\title[Descent and finite permutation resolutions]{Descent and finite permutation resolutions for discrete groups}

\author{{Juan Omar} G\'omez}
\author{Luca Pol}

\makeatletter
\patchcmd{\@setaddresses}{\indent}{\noindent}{}{}
\patchcmd{\@setaddresses}{\indent}{\noindent}{}{}
\patchcmd{\@setaddresses}{\indent}{\noindent}{}{}
\patchcmd{\@setaddresses}{\indent}{\noindent}{}{}
\makeatother

\address{Juan Omar G\'omez, Fakultat f\"ur Mathematik, Universit\"at Bielefeld, D-33501 Bielefeld, Germany}
\email{jgomez@math.uni-bielefeld.de}
\urladdr{https://sites.google.com/cimat.mx/juanomargomez/home}

\address{Luca Pol, Max Planck Institute for Mathematics, Vivatsgasse 7, 53111 Bonn, Germany}
\email{pol@mpim-bonn.mpg.de}
\urladdr{https://sites.google.com/view/lucapol/}

\maketitle

\begin{abstract}
    Let $G$ be a discrete group with a finite-dimensional model for the classifying space for proper actions, and let $k$ be a commutative Noetherian ring of finite global dimension. In this setting, we prove that the homotopy category of projective $kG$-modules, the stable module category of $kG$-modules as defined by Mazza-Symonds, and the derived category of permutation $kG$-modules with finite isotropy, admit descent to finite subgroups. As an application, we show that any $kG$-module of type $FP_\infty$ is a retract of a module that admits a finite resolution by finitely generated $\natural$-permutation modules with finite isotropy, generalizing a result of Balmer-Gallauer.
\end{abstract}
\tableofcontents
\setcounter{tocdepth}{1}

\setlength{\parindent}{0cm}
\setlength{\parskip}{0.8ex}

\section{Introduction}
Let $G$ be a group and $k$ a commutative ring. Recall that a permutation $kG$-module is the $k$-linearization of a $G$-set, and that a $\natural$-permutation module is a module that arises as a retract of a permutation module. These modules occupy a distinguished place in the representation theory of groups, not only because of their central role in the representation theory of finite groups \cite{BC21}, but also to their interactions with other areas of mathematics; for instance, in the theory of Artin motives \cite{Voe00} and in the study of cohomological Mackey functors, see \cite{BG23} and the references therein.

Our interest in this class of modules is motivated by a striking result of Balmer and Gallauer \cite{balmer2023finite}, which establishes that for finite groups and regular rings, every finitely generated module is a retract of a module admitting a finite resolution by finitely generated $\natural$-permutation modules. This is surprising because permutation modules, with their comparatively rigid structure, may not appear ‘wild
enough’ to control all $kG$-modules. 

A natural question is whether this result can be extended to a broader class of groups. In this direction, groups admitting a finite-dimensional model for the classifying space for proper actions provide a promising framework for exploring the representation theory of infinite groups; see for instance \cite{Kro93}, \cite{Ben97}, \cite{Talelli}, \cite{CEKT14}, \cite{mazza2019stable} and \cite{Gomez24}. This is a large class of groups, encompassing, among others,  arithmetic groups, automorphisms groups of free groups, groups with finite virtual cohomological dimension and countable locally finite groups. However, in this setting the notion of finitely generated modules is no longer adequate, since the group algebra of an infinite group is seldom Noetherian. One is thus led to consider instead modules of type $FP_\infty$ (also known as pseudo-coherent modules), namely those that admit a resolution by finitely generated projective modules; see \cite[Section 9.3]{krause} for further discussion of their importance. Among $\natural$-permutation modules, an important subclass is formed by the \emph{finite $\natural$-permutation modules}, namely those that are retracts of a finite direct sum of modules of the form $k(G/H)$, where $H$ is a finite subgroup of $G$. Our generalization of Balmer-Gallauer result states the following, which corresponds to \cref{coro-finite-perm-res}.

\begin{thmx}
    Let $G$ be a group with a finite-dimensional model for the classifying space for proper actions and $k$ be a Noetherian ring of finite global dimension. Then any $kG$--module of type $FP_\infty$ is a retract of a module that admits a finite resolution by  finite $\natural$-permutation modules. 
\end{thmx}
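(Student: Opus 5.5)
The plan is to deduce the statement from the descent results announced in the abstract, following the strategy of Balmer--Gallauer. Write $\mathrm{DPerm}(G;k)$ for the derived category of permutation $kG$-modules with finite isotropy. Concretely, this is the Verdier quotient of the homotopy category $K(\mathrm{Perm}_{\mathrm{fin}}(G;k))$ by the acyclic complexes, and its compact part $\mathrm{DPerm}(G;k)^c$ is the idempotent completion of $K_b(\mathrm{perm}_{\mathrm{fin}}(G;k))/K_{b,\mathrm{ac}}$.

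The key point is to show that every bounded-below complex $P$ of finitely generated projectives becomes, after a suitable modification, quasi-isomorphic to a retract of a bounded complex of finite $\natural$-permutation modules. Here $P$ is the resolution of the given $FP_\infty$ module $M$. I intend to achieve this in three steps.

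\emph{Step 1 (reduction to the derived category).} Since $G$ admits a finite-dimensional model for $\underline{E}G$ and $k$ has finite global dimension, $M$ has finite projective dimension relative to finite subgroups in the following sense. The cellular chain complex $C_*(\underline{E}G;k)$ is a finite-length resolution of $k$ by permutation modules with finite isotropy. Tensoring it with $M$ gives
\[
C_*(\underline{E}G;k)\otimes M \simeq M .
\]
Each term $k(G/H)\otimes M\cong \ind_H^G\res^G_H M$ has $H$ finite, and $\res_H M$ is $FP_\infty$ over the finite group algebra $kH$, hence finitely generated over the Noetherian ring $kH$. One complication is that $\underline{E}G$ need not be cocompact, so the terms are infinitely generated. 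To handle this I will not work with $\underline{E}G$ directly. Instead I will use the descent theorem: $\mathrm{DPerm}(G;k)$, and therefore its compacts, is recovered as a limit over the orbit category of finite subgroups, with $\mathrm{DPerm}(G;k)^c$ generated under finite colimits and retracts by the images of the induction functors $\ind_H^G$.

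\emph{Step 2 (finite subgroups).} For each finite $H$, apply Balmer--Gallauer: $\res_H M$ is a retract of a module with a finite resolution by finitely generated $\natural$-permutation $kH$-modules. Equivalently, $\res_H M$ lies in the image of $K_b(\mathrm{perm}(H;k))\to D_b(kH)$ up to retracts, and $\mathrm{DPerm}(H;k)^c\to D_b(\mathrm{mod}\,kH)$ is essentially surjective up to retracts.

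\emph{Step 3 (descent).} Using the descent statement, I will show that the canonical functor $\mathrm{DPerm}(G;k)^c\to D(kG)$ has essential image, up to retracts, containing $M$. The argument has two parts. Truncating the resolution of $M$ expresses $M$ as a finite extension of a perfect complex and a high syzygy. Then finite-dimensionality of $\underline{E}G$ together with the finite global dimension of $k$ make the relevant Ext into the syzygy vanish beyond $\dim\underline{E}G+\mathrm{gldim}\,k$, so the syzygy is detected by restrictions to finite subgroups. Once $M$ is a retract of an object in the image, I need a Balmer--Gallauer style lemma converting "retract of a bounded permutation complex quasi-isomorphic to $M$" into "$M$ is a retract of a module $N$ with a finite permutation resolution." I will prove this lemma by adding cones of identity maps and shifting so that the homology is concentrated in degree $0$, as in their paper. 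This step is standard.

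The main obstacle is Step 3: passing from local information at finite subgroups to a global statement using only finitely many subgroups and finitely generated pieces. I will first try to use the descent equivalence to present $M$ as a finite colimit of objects of the form $\ind_H^G X_H$ with $X_H$ compact. If $\underline{E}G$ is not cocompact, I will instead use that $M$ is $FP_\infty$ to factor the relevant maps through finite skeleta, so that only finitely many orbits contribute.
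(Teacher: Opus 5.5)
There is a genuine gap. It sits at the step you identify as the main obstacle, and the local input you plan to feed into that step is also wrong.

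In Steps 1--2 you use that $\res^G_H M$ is $FP_\infty$, and hence finitely generated, over $kH$. This fails as soon as $[G:H]$ is infinite: for $M=kG$ the restriction is free of infinite rank over $kH$. So Balmer--Gallauer cannot be applied to $\res^G_H M$. What can be imported from finite subgroups is not information about $M$, but the finite-group theorem in its big form: $q_H\colon \DPerm(H,k)\to\mathbf{K}(\Proj(kH))$ is a finite localization. Relatedly, defining $\mathrm{DPerm}(G;k)$ as a quotient by acyclic complexes conflates source and target. Inverting quasi-isomorphisms is what the comparison functor does. Asserting that the compacts of that quotient are $(K_b(\mathrm{perm}_{\mathrm{fin}})/K_{b,\mathrm{ac}})^\natural$ already presupposes that the acyclic part is compactly generated by bounded acyclic permutation complexes, which is essentially what has to be proved. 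In the paper, $\DPerm^{\fin}(G,k)$ is the localizing subcategory of $\mathbf{K}(\Mod(kG))$ generated by finite permutation modules, with nothing inverted. That is the category which satisfies descent, and descent of the big categories does not formally pass to compact objects.

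In Step 3, Gorenstein projectivity of a high syzygy and joint conservativity of restrictions are vanishing statements. They do not produce a permutation complex mapping to $M$. Writing $M$ as a finite iterated extension of objects $\ind_H^G X_H$ with $X_H$ compact is possible, since these generate $\mathbf{K}(\Proj(kG))^c$ as a thick subcategory. Each $X_H$ is a retract of the image of a finite permutation complex. But this only places $M$ in the thick closure of the image of $\mathbf{K}^b(\mathrm{perm}^{\fin}(G,k)^\natural)$. The connecting maps are arbitrary morphisms in $\mathbf{D}^b(\mathrm{mod}_\infty(kG))$ and need not lift to maps of permutation complexes. So you cannot conclude that $M$ is a retract of a single object in the image, and working with finite skeleta does not change this.

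The missing idea is fullness: the functor from $\mathbf{K}^b(\mathrm{perm}^{\fin}(G,k)^\natural)$ modulo acyclics to $\mathbf{D}^b(\mathrm{mod}_\infty(kG))$ must be fully faithful. That is exactly what $q_G$ being a finite localization provides via Neeman--Thomason. The paper proves it as follows:
\begin{enumerate}
\item It defines $q_G$ as the limit of the $q_H$, using the descent equivalences for both $\DPerm^{\fin}$ and $\mathbf{K}(\Proj)$.
\item It shows $\Ker(q_G)\simeq\lim_H\Ker(q_H)$ is a smashing ideal, since smashing ideals are stable under limits.
\item It shows this kernel is compactly generated: the projections out of the limit preserve limits, hence have left adjoints, so the compact generation criterion applies.
\item It applies Neeman--Thomason together with $\mathbf{K}(\Proj(kG))^c\simeq\mathbf{D}^b(\mathrm{mod}_\infty(kG))$.
\end{enumerate}
That last equivalence relies on Benson's complete resolutions by finitely generated projectives for high syzygies of $FP_\infty$ modules, which your syzygy discussion only gestures at.
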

At this point, let us emphasize that our proof differs substantially from that of Balmer and Gallauer in the finite case. Their argument relies on techniques specific to finite groups—such as tensor induction from the trivial subgroup—whose extension to infinite groups is far from clear. Rather than attempting to adapt their methods to our setting, we follow the philosophy of understanding the representation theory of a group in terms of the representation theory of its finite subgroups, thereby allowing us to leverage the result of \cite{balmer2023finite}. This perspective can be made precise by working in the $\infty$-categorical setting, which provides a framework in which one can apply descent techniques inspired by homotopy theory \cite{Mat16}, \cite{MNN2017}. Such techniques are not accessible when working solely with triangulated categories.

We now describe our methods in more detail.

\subsection{Methodology}  Our starting point is \cite[Theorem 1.4]{balmer2023finite}, from which the aforementioned result of Balmer and Gallauer is deduced. It can be interpreted as follows: for a finite group $G$ and a regular commutative ring $k$, there is a finite localization 
\begin{equation}\label{fin-local}
\DPerm(G,k) \to \mathbf{K}(\Proj(kG))
\end{equation}
between the homotopy category of projective $kG$-modules and the big derived category of permutation $kG$-modules. The former provides a suitable \textit{big} version of the bounded derived category of finite $kG$-modules \cite{BBIKP}, while the latter serves as a corresponding \textit{big} version of the bounded homotopy category of finite $\natural$-permutation modules.

The formulation given in \eqref{fin-local} is more amenable to generalization to infinite discrete groups and can be approached using higher category theory.
With this perspective in mind, we begin by introducing suitable analogues of these categories for infinite groups, with the aim of describing them in terms of the corresponding categories associated to their finite subgroups. It turns out that relatively direct (\textit{ad hoc}) constructions suffice, at least for well-behaved groups.

In particular, we consider the \textit{derived category of finite permutation modules} $\DPerm^{\fin}(G,k)$, defined as the localizing subcategory of the homotopy category of $kG$-modules generated by the finite permutation modules; this construction is carried out in \cref{sec-perm}. We also study the homotopy category $\mathbf{K}(\Proj(kG))$ of projective $kG$-modules in \cref{sec-kproj}. All of these categories are compactly generated, at least if $G$ admits a finite-dimensional model for the classifying space for proper actions.

A key observation is that these categories are functorial in the group variable with respect to group monomorphisms, which naturally leads us to consider a more general framework.

\subsection{Descent}
Fix a group $G$. 
We are interested in functors
\[
\cat C \colon \Orb_G^{\opname}\to \PrL
\]
from the $G$-orbit $\infty$-category to the $\infty$-category  $\PrL$  of presentable stable $\infty$-categories and left adjoint functors. We furthermore assume that 
 \begin{enumerate}
     \item For any subgroup $H\subseteq G$, the restriction functor $\res^G_H \colon \cat C(H) \to \cat C(K)$ admits a left adjoint $\ind_H^G$, which we call induction.
    \item The restriction-induction functors satisfy the Mackey formula.
    \end{enumerate}

The relevance of considering such functors lies in the following descent result, which appears later as \cref{thm-descent}.

\begin{thmx}\label{thmx-descent}
   Let $G$ be a group and $\cat C \colon \Orb_G^{\opname} \to \PrL$ be as above. Suppose that there exists a collection $\cat F$ of subgroups of $G$ such that $\{\res^G_H\}_{H \in \cat F}$ is jointly conservative. Then the restriction functors induce an equivalence of stable $\infty$-categories
    \[
    \cat C(G) \xrightarrow{\sim}\lim_{G/H \in \Orb_{G,\cat F}^{\opname}} \cat C(H).
    \]
\end{thmx}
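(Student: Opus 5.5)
We prove \cref{thmx-descent} by monadicity, in the style of Mathew--Naumann--Noel. Set $A=\bigoplus_{H\in\cat F}\ind_H^G\unit_H$, or more invariantly consider the induced object of $\cat C(G)$ from the finite $G$-set $\coprod_{H\in\cat F} G/H$. We do not need a tensor structure, though. Instead we work directly with the adjunction
\[
R=(\res^G_H)_{H\in\cat F}\colon \cat C(G)\rightleftarrows \prod_{H\in\cat F}\cat C(H)\colon L=\textstyle\sum_H \ind_H^G .
\]
The comonad $RL$ on $\prod_H\cat C(H)$ is computed by the Mackey formula. For each $K,H\in\cat F$,
\[
\res^G_K\ind_H^G \simeq \bigoplus_{KgH}\ind^K_{K\cap {}^gH}\, c_g\,\res^H_{K^g\cap H},
\]
and all subgroups appearing here are conjugates of subgroups of members of $\cat F$. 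We may first replace $\cat F$ by its closure under conjugation and passage to subgroups. This does not change joint conservativity, and by cofinality it does not change the limit, since $\Orb_{G,\cat F}$ for the closed family is what the limit means anyway.

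The key steps are as follows.

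First, $R$ is a left adjoint, since it preserves colimits because each $\res$ is in $\PrL$. It also admits the left adjoint $L$, so $R$ preserves all limits and colimits, and it is conservative by hypothesis. By the Barr--Beck--Lurie theorem (comonadic version), $\cat C(G)$ is then equivalent to comodules over the comonad $RL$ in $\prod_H\cat C(H)$. The split-totalization condition is automatic here: a stable exact functor preserving all limits preserves $R$-split totalizations.

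Second, we identify the comonadic cobar construction with the limit over the orbit category. The iterated terms $(RL)^{n}$ are, via the Mackey formula, given by sums over chains of $G$-maps between orbits $G/H_0\leftarrow G/H_0\times G/H_1\to\cdots$, that is, over $\Orb_{G,\cat F}$ extended to finite $\cat F$-sets. So the cobar resolution $X\to \mathrm{Tot}(R(LR)^\bullet X)$ is the Čech nerve of $\coprod_{H\in\cat F}G/H\to G/G$. Extend $\cat C$ to finite $\cat F$-sets (and arbitrary coproducts of orbits) by sending disjoint unions to products. Then $\cat C(G)\simeq\lim_\Delta \cat C(\check{C}^\bullet)$, where $\check C^n=(\coprod G/H)^{n+1}$ decomposes into orbits $G/K$ with $K$ in the closed family $\cat F$. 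It remains to note that the Čech nerve of a cover $U\to *$ in the presheaf topos on $\Orb_{G,\cat F}$ has colimit the terminal presheaf, since $U$ surjects onto $*$. Hence the right Kan extension gives $\lim_\Delta\cat C(\check C^\bullet)\simeq\lim_{\Orb_{G,\cat F}^{\opname}}\cat C$. This is the standard comparison between Čech descent and the limit over the orbit category.

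The main obstacle is the second step: matching the abstract comonad with the orbit-category diagram coherently, rather than only on homotopy categories. This requires the Mackey formula as a coherent Beck--Chevalley condition for pullback squares of finite $G$-sets, so that $\cat C$ extends to a sheaf-like functor on spans. I would first formulate the hypothesis as "left adjointability of all pullback squares of orbits", and invoke Lurie's descent criterion (HA 4.7.5.3 / the Beck--Chevalley form of Barr--Beck). That criterion gives directly that $\cat C(G)\to\lim_\Delta\cat C(\check C^\bullet)$ is an equivalence once $R$ is conservative. The cofinality step is then formal.
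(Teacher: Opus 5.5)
Your argument is essentially the paper's. There too the key input is \cite[Corollary 4.7.5.3]{HA}, applied to the \v{C}ech nerve of $Y=\coprod_{(H)}G/H\to G/G$. Adjointability of the relevant squares comes from their being pullbacks of $G$-sets together with the Mackey hypothesis (\cref{thm-stmod-lim}). A cofinality step then identifies $\lim_\Delta$ with the limit over $\Orb_{G,\cat F}^{\opname}$ (\cref{lem1} and \cref{lem2}, via \cite[Proposition 6.28]{MNN2017}); your effective-epimorphism argument in presheaves on $\Orb_{G,\cat F}$ is an equivalent way of doing that last step. Your preliminary Barr--Beck step is superseded by all this. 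Note in any case that with $L\dashv R$ the endofunctor $RL$ is a monad, so a comonadic description has to use the right adjoints $\CoInd_H^G$.

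The one step that is actually false is the reduction ``replace $\cat F$ by its closure $\overline{\cat F}$ under subgroups; by cofinality the limit does not change''. Restricting from $\Orb_{G,\overline{\cat F}}^{\opname}$ to $\Orb_{G,\cat F}^{\opname}$ is not an equivalence on limits in general. In fact the conclusion genuinely fails for collections that are not closed under intersections of conjugates. Take $G=C_6$, $\cat F=\{C_2,C_3\}$ and $\cat C=\mathbf{D}(k-)$:
\begin{itemize}
\item each of the two restrictions is already conservative;
\item there are no maps between $G/C_2$ and $G/C_3$, so $\Orb_{G,\cat F}\simeq BC_3\sqcup BC_2$;
\item since $\lim_{B(G/H)}\mathbf{D}(kH)\simeq\mathbf{D}(kG)$ for $H$ normal, the limit is $\mathbf{D}(kC_6)\times\mathbf{D}(kC_6)$, and the comparison functor is the diagonal.
\end{itemize}
So this point cannot be handled by cofinality. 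You must assume that $\cat F$ is closed under intersections of conjugates (e.g.\ a family). That is precisely what makes each $\check{C}^n=Y^{\times(n+1)}$ a coproduct of orbits in $\Orb_{G,\cat F}$. The paper's proof relies on the same assumption tacitly, since \cref{lem2} needs $Y^{\times\bullet+1}$ to land in $\Set_{G,\cat F}$. With that hypothesis made explicit, your argument goes through.
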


In fact, the above equivalence is symmetric monoidal if the functor $\cat C$ lifts to symmetric monoidal stable $\infty$-categories. We say that a functor satisfying the hypotheses of the previous theorem \textit{admits descent with respect to the collection $\cat F$}.

The upshot is that, for a group $G$ admitting a finite-dimensional model for the classifying space for proper actions, and for a commutative Noetherian ring $k$ of finite global dimension, the following categories admit descent with respect to the family of \emph{finite} subgroups:
\begin{itemize}
    \item The stable module category of $G$ as defined in \cite{mazza2019stable}; see \cref{thm-descent-stmod}. 
    \item The homotopy category of projective $kG$--modules; see \cref{thm-descent-kproj}.
    \item The derived category of finite permutation $kG$--modules; see \cref{thm-descent-permutation}. 
\end{itemize}

We also note that these categories admit descent with respect to other families of finite subgroups. For instance, the first two admit descent with respect to the family of  elementary abelian subgroups, while the third admits descent with respect to the family of subgroups of $p$-power order (allowing the prime $p$ to vary). See \cref{sec-descent-ex} for further details.

With these results in place, we can now explain how to extend  Balmer–Gallauer result to the class of groups under consideration. 
\subsection{A finite localization}
Let $G$ be a group admitting a finite-dimensional model for the classifying space for proper actions.
For a finite subgroup $H\subseteq G$, let
    \[
    q_H \colon \DPerm(H,k) \to \mathbf{K}(\Proj(kH))
    \]
denote the finite localization functor of \cite[Theorem 1.4]{balmer2023finite}; see \cref{prop-radjoint-fincase} for a reformulation.  Using our descent results, we define a functor
\[
 q_G \colon \DPerm^{\fin}(G,k) \to \mathbf{K}(\Proj(kG)) 
\]
as the limit of the finite localizations $q_H$; see \cref{cons-q}. The language of $\infty$-categories let us formally conclude that $q_G$ is a smashing localization. Although such a limit need not, in general, be a finite localization, we show that in our situation it is, see \cref{thm-perm-localization-kproj}.
\begin{thmx}\label{thmx-finite-loc}
       Let $G$ be a group admitting a finite-dimensional model for the classifying space for proper actions, and let $k$ be a commutative Noetherian ring of finite global dimension. Then the functor 
    \[
    q_G \colon \DPerm^{\fin}(G,k) \to \mathbf{K}(\Proj(kG))
    \]
    is a finite localization. 
\end{thmx}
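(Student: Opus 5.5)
We will show that the kernel of $q_G$ is generated, as a localizing subcategory, by a set of compact objects of $\DPerm^{\fin}(G,k)$. Recall that a smashing localization $q\colon \cat T\to \cat T/\cat K$ between compactly generated categories is finite exactly when $\cat K=\Ker(q)$ is generated by compacts of $\cat T$. By construction (\cref{cons-q}), $q_G$ is the limit of the finite localizations $q_H$ over $G/H\in\Orb_{G,\cat F}^{\opname}$, with $\cat F$ the family of finite subgroups. We already know that $q_G$ is smashing and that its right adjoint is computed levelwise. An object $X\in\DPerm^{\fin}(G,k)$ therefore lies in $\Ker(q_G)$ if and only if $\res^G_H X\in \Ker(q_H)$ for every finite $H$. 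This uses the descent equivalences \cref{thm-descent-permutation} and \cref{thm-descent-kproj}, together with the compatibility of $q$ with restriction.

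\textbf{Candidate generators.} For each finite $H$, \cite[Theorem 1.4]{balmer2023finite} gives a set $\cat S_H$ of compact generators of $\Ker(q_H)$. These are bounded complexes of finitely generated permutation $kH$-modules that are acyclic. The candidate set is
\[
\cat S=\SET{\ind_H^G S}{H\subseteq G \text{ finite},\ S\in\cat S_H}.
\]
Induction is left adjoint to a restriction functor that preserves colimits, so it preserves compacts. Moreover $\ind_H^G k(H/L)=k(G/L)$, so $\cat S$ consists of compact objects of $\DPerm^{\fin}(G,k)$. Each $\ind_H^G S$ lies in $\Ker(q_G)$. To see this, apply the Mackey formula: $\res^G_K\ind_H^G S$ is a finite sum of objects $\ind_{K\cap {}^gH}^K\res\, {}^gS$. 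Each $q_K$ commutes with induction and restriction, because these preserve acyclic complexes of permutation modules. Hence each $q_K$ kills these summands. Consequently $\Loc\langle\cat S\rangle\subseteq\Ker(q_G)$.

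\textbf{Reverse inclusion.} It suffices to show that $\cat S^\perp\cap\Ker(q_G)=0$ inside $\DPerm^{\fin}(G,k)$, since $\Loc\langle\cat S\rangle$ is then the whole kernel. Take $X\in\Ker(q_G)$ right orthogonal to $\cat S$. By adjunction, $\res^G_H X\in \cat S_H^\perp=\Ker(q_H)^\perp$, and also $\res^G_HX\in\Ker(q_H)$. Hence $\res^G_HX=0$ for all finite $H$. The restrictions to finite subgroups are jointly conservative (the hypothesis used in \cref{thm-descent-permutation}), so $X=0$.

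\textbf{Main obstacle.} The step requiring the most care is that $q_G$ commutes with restriction and induction. Equivalently, the kernels $\Ker(q_H)$ must be compatible with $\ind$ as well as $\res$, so that the limit description of $\Ker(q_G)$ is valid. Compatibility with restriction is built into \cref{cons-q}. For induction, I would first try to identify $\Ker(q_H)$ intrinsically as the localizing subcategory generated by acyclic bounded complexes of finite permutation modules. Once it is described this way, it is visibly stable under $\ind$, since induction from finite subgroups is exact and sends permutation modules to permutation modules. This also requires that restriction to $H$ lands in $\DPerm(H,k)$, which holds because, by the Mackey formula, $\res^G_Hk(G/L)$ is a sum of $k(H/H\cap{}^gL)$. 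If this direct route fails, I would fall back on using the right adjoint of $q_G$ and checking the projection formula levelwise.
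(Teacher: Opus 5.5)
Your argument is correct, and for the compact-generation step it takes a genuinely different route from the paper. The paper never writes down generators. It identifies $\Ker(q_G)\simeq\lim_H\Ker(q_H)$ and applies \cref{prop-compactgen} to the projections $p_H\colon\lim_H\Ker(q_H)\to\Ker(q_H)$. It obtains their left adjoints abstractly: limits in $\lim_H\Ker(q_H)$ are computed pointwise by \cite[Proposition 3.14]{Sil}, since the restrictions preserve limits, and then the adjoint functor theorem applies. Finally it uses that $q_G$ is smashing, so that compact objects of the kernel stay compact in $\DPerm^{\fin}(G,k)$.

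You instead identify those left adjoints concretely. The input you need is that $\ind$ carries $\Ker(q_L)$ into $\Ker(q_K)$; this, not a commutation $q_K\ind\simeq\ind q_L$, is all you use. You correctly get it from the Balmer--Gallauer description of $\Ker(q_K)$ as the localizing subcategory generated by bounded acyclic complexes of $\natural$-permutation modules, together with exactness of $\ind$, $\res$ and conjugation. Given that, $\ind_H^G$ restricted to $\Ker(q_H)$ is left adjoint to $\res^G_H$ restricted to $\Ker(q_G)$, so your $\ind_H^G S$ are, a posteriori, exactly the paper's $(p_H)_!(x)$.

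The paper's route is more formal: it needs only that each $\Ker(q_H)$ is compactly generated, not what these kernels are. Yours uses more specific input but pays off in several ways. You get explicit generators, and compactness in the ambient category comes for free, with no smashing needed there. The orthogonality argument is elementary. You also get a transparent description of the compact part of the kernel as the thick closure of induced bounded acyclic complexes of finite $\natural$-permutation modules, which is what \cref{coro-finite-perm-res} ultimately exploits.

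Two points to tighten.

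First, you take as already known that $q_G$ is a smashing localization with levelwise right adjoint. Nothing before the theorem establishes this; the paper proves it inside the proof via \cite[Corollary 9.9]{descent}. Your own key fact closes the gap. Let $r_K$ denote the right adjoint of $q_K$, and take $L\subseteq K$ finite (conjugations are equivalences, so only inclusions matter). For $S\in\Ker(q_L)$ and $Y\in\mathbf{K}(\Proj(kK))$ you have $\Hom(S,\res^K_L r_KY)\cong\Hom(\ind_L^KS,r_KY)=0$. So $\res^K_L r_KY$ lies in $\Ker(q_L)^\perp$, and the Beck--Chevalley map $\res^K_Lr_K\to r_L\res^K_L$ is an equivalence. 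Hence the $r_K$ assemble into a right adjoint $\lim_K r_K$ of $q_G$ whose counit is levelwise an equivalence. So $q_G$ is a Bousfield localization, and with your compactly generated kernel it is finite (and therefore smashing).

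Second, in the Mackey formula the sum runs over $K\backslash G/H$, which is infinite as soon as $G$ is infinite. This is harmless because $\Ker(q_K)$ is localizing, but it is not a finite sum.
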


The final step in proving the main result of this introduction is to identify the compact objects of the category $\mathbf{K}(\Proj(kG))$. This is carried out in \cref{cor-compact-Kproj}.

\subsection{Further applications} We now explain how our descent results can be used to address classification problems in two specific contexts. These examples are, of course, chosen somewhat arbitrarily, and we emphasize that descent techniques have applications beyond those presented here.

First, we leverage \cref{thm-descent-permutation} to develop computational tools for determining the Picard group of the derived category of finite permutation modules. Since this invariant has already been studied in the finite case, we also show that our methods can be used to provide explicit computations for infinite groups. As an illustration, we provide a complete computation for the group $\mathrm{SL}_2(\mathbb Z)$; see \cref{sec-picard}.

On the other hand, we use \cref{thm-descent-stmod} to give an explicit classification of separable commutative algebras in the stable module category for groups such as $\mathrm{SL}_2(\mathbb Z)$ and $\mathbb{Z}/p^\infty$. This is achieved by exploiting the relationship between these objects and the so-called Galois group of a stable homotopy theory (see \cite{Mat16}), using methods from \cite{NaumannPol}. Details are given in \cref{sec-galois}.

\subsection{Relation to other work}We have already mentioned that our descent techniques are motivated by \cite{Mat16} and \cite{MNN2017}; in particular, the former is a pioneering work on descent methods in the representation theory of finite groups. In \textit{loc. cit.}, Mathew proves that the stable module category in the modular setting admits descent with respect to the family of elementary abelian $p$-subgroups. In \cite{Hun22}, Hunt identifies additional collections of subgroups for which the stable module category admits descent, while \cite{Gomez24} establishes analogous results for the stable module category of certain infinite groups.

We emphasize that our approach is more general: it recovers the results of \cite{Hun22} and \cite{Gomez24}, and moreover applies in a broader setting; for instance, allowing for more general ground rings, rather than just fields. 

\subsection{Outline of the document}  \cref{sec-recollections} is devoted to introducing notation and terminology concerning classes of infinite groups that will be used throughout the paper. In particular, we discuss groups of type $\Phi_k$, groups admitting a finite-dimensional model for the classifying space for proper actions, and hierarchically defined groups.

We introduce the stable module category for hierarchically defined groups in \cref{sec-stmod}, the homotopy category of projective modules for groups of type $\Phi_k$ in \cref{sec-kproj}, and the derived category of finite permutation modules in \cref{sec-perm}. In \cref{sec-functoriality}, we show that these categories are functorial in the group variable, while \cref{sec-descent} is devoted to the abstract descent result \cref{thmx-descent}.

Section \cref{sec-perm-resolutions} is concerned with the proof of \cref{thmx-finite-loc}. Finally, \cref{sec-picard} and \cref{sec-galois} address applications to Picard groups and commutative separable algebras, respectively.

\subsection{Conventions}\label{conventions} 
We finish with the conventions for our paper. 
\begin{enumerate}
    \item Throughout this paper, all groups are assumed to be discrete. By a \emph{collection of subgroups} we mean a set of subgroups closed under conjugation, while a \emph{family of subgroups} is a collection that is further closed under passage to subgroups.
    \item We reserve the letter $k$ to denote a commutative (discrete) ring, and impose additional assumptions when needed. In particular, a commutative regular ring is always assumed to be Noetherian.
    \item We let $\CAlg(\PrL)$ denote the $\infty$-category of presentably symmetric monoidal stable $\infty$-categories, and colimit preserving functors between them. This is the higher analog of the category of big tt-categories.
    \item We let $\CAlg(\Cat^{\perf}_\infty)$ denote the $\infty$-category of essentially small, idempotent complete, symmetric monoidal stable $\infty$-categories, and exact functors between them. This is the higher analog of the category of essentially small, idempotent complete tt-categories.
\end{enumerate}
\subsection{Acknowledgements}
The authors thank Bastiaan Cnossen, Martin Gallauer, Kaif Hilman, Sil Linskens, Julia Sauter and Peter Symonds for helpful discussions.

JOG is supported by the Deutsche Forschungsgemeinschaft (Project-ID 491392403 – TRR 358). LP was supported by the SFB 1085 Higher Invariants in Regensburg and by the European Research Council (ERC) under Horizon Europe (grant No.~101042990). LP is grateful to Max Planck Institute for Mathematics in Bonn for its hospitality and financial support.

\section{Recollection on infinite groups}\label{sec-recollections}

In this section, we introduce two large classes of groups: those admitting a finite-dimensional model for the classifying space of proper actions, and those of type $\Phi$ as introduced in \cite{Talelli}. We discuss examples and the relation between these two classes. Finally, we recall Kropholler's hierarchy class. 

The first class of groups we introduce will depend on the following universal space. 
\begin{Rec}
Let $G$ be a discrete group.  Then there exists a $G$-CW complex $\underline{E}G$ such that the fixed point space $\underline{E}G^H$ is contractible for every finite subgroup $H\subseteq G$, and empty for every infinite subgroup $H$. This space $\underline{E}G$ is unique up to $G$-homotopy and is called the \textit{classifying space for proper actions of $G$}.
\end{Rec}
\begin{Def}\label{def-finite-model}
We say that $G$ admits a \textit{finite model for $\underline{E}G$} if there exists a $G$-CW-complex $X$ with a finitely many $G$-equivariant cells such that $X \simeq_G \underline{E}G$. We say that $G$ admits a \textit{finite-dimensional model for $\underline{E}G$} if there exists a $G$-CW-complex  $X \simeq_G \underline{E}G$ whose $G$-equivariant cells are bounded in dimension.
\end{Def} 
\begin{Exa}
    If $G$ is torsion-free (e.g. $G=\Z$) then $\underline{E}G$ agrees with the universal contractible free $G$-space $EG$.
\end{Exa}
\begin{Exa}
    Consider the infinite dihedral group $G=D_{2^{\infty}}=\Z \rtimes \Z/2$. The real line with the action given by translation and reflection is a model for $\underline{E}G$. 
\end{Exa}
\begin{Exa}
     Other concrete examples of groups admitting finite-dimensional models for $\underline{E}G$ include arithmetic groups, automorphisms groups of free groups, groups with finite virtual cohomological dimension and locally finite groups satisfying certain mild set-theoretic conditions \cite[Theorem 2.6]{Dicks}.  
\end{Exa}
The next class of groups we introduce will depend on a choice of a commutative ring.  Unless otherwise stated, we will work under the following:
\begin{Hyp}\label{hyp}
    We let $k$ denote a commutative Noetherian ring of finite global dimension. 
\end{Hyp}

\begin{Rem}\label{rem-regular-rings}
   Recall that a commutative ring $k$ is \textit{regular} if it is Noetherian and locally of finite global dimension. In particular, if $k$ satisfies \cref{hyp}, then it is regular.
\end{Rem}

\begin{Def}\label{def-group-type-Phi}
    A group $G$ is said to be of \textit{type $\Phi_k$} if the following holds: any $kG$--module $M$ has finite projective dimension over $kG$ if and only if, for every finite subgroup $H\subseteq G$, the restriction $\res^G_H M$ has finite projective dimension over $kH$.
\end{Def}

\begin{Rem}
It is clear from the definition that the class of groups of type $\Phi_k$ is closed under taking subgroups.    
\end{Rem}

\begin{Exa}\label{ex-special-cx}
    Any group $G$ that admits  an exact complex of $kG$--modules of the form 
    \[
     0\to C_n \to C_{n-1} \to\ldots \to C_0 \to k \to 0 
    \]
    with each $C_i$ a coproduct of modules of the form $k(G/H)$ for finite subgroups $H\subseteq G$ is of type $\Phi_k$. This is proved in \cite[Proposition 2.5]{mazza2019stable}.
\end{Exa}
\begin{Exa}\label{ex-fin-dim-model-Phi}
    Let $G$ be a group admitting a finite-dimensional model $X$ for $\underline{E}G$. The augmented cellular chain complex $\overline{C}_\ast(X;k)$ of $X$  provides a complex as in \cref{ex-special-cx}. Therefore, $G$ is of type $\Phi_k$ for any $k$.

\end{Exa}

\begin{Exa}
      Groups of type $\Phi_k$ have finite finitistic dimension \cite[Lemma 2.4]{mazza2019stable}. In particular, no free abelian group of infinite rank is of type $\Phi_k$, for any choice of $k$.
\end{Exa}

\begin{Rem}\label{rem-conjecture Phi}
    In \cite{Talelli}, it was conjectured that the class of groups of type $\Phi_\mathbb{Z}$ coincides with the class of groups admitting a finite-dimensional model for the classifying space for proper actions. To the best of our knowledge, this conjecture remains open. For some of the results presented later in this document, this distinction is irrelevant; however, in other cases the arguments do rely on the existence of such a model. We will make this dependence explicit whenever necessary.
\end{Rem}

We conclude this section with another class of groups which contains properly the class of groups of type $\Phi_k$.

\begin{Rec}
    Let $\mathcal{X}$ be a class of groups. Kropholler’s hierarchical class of $\mathrm{H}\mathcal{X}$-groups is defined inductively as follows. Set $\mathrm{H}_0\mathcal{X}\coloneqq\mathcal{X}$. For each ordinal $\alpha>0$, let $\mathrm{H}_\alpha\mathcal{X}$ denote the class of groups $G$ admitting a contractible finite-dimensional $G$-CW complex such that the isotropy group of every cell belongs to $\mathrm{H}_\beta\mathcal{X}$ for some $\beta<\alpha$. Finally, define  
    \[
    \mathrm{H}\mathcal{X}\coloneqq \bigcup_{\alpha\geq 0} \mathrm{H}_\alpha\mathcal{X}.
    \]
  For further details, we refer to \cite{Kro93}.   
\end{Rec}

\begin{Rem}
   In this text, we mainly consider $\mathrm{H}\mathcal{X}$-groups where $\mathcal{X}$ is either the class of finite groups or the class of groups of type $\Phi_k$. At present, we are not aware of any examples showing that these two classes differ; see  \cref{rem-conjecture Phi}.
\end{Rem}

 \section{Stable module category}\label{sec-stmod}

In this section we introduce the stable module category of an $\mathrm{H}\Phi_k$-group following \cite{emmanouil2025class}. This construction generalizes the usual stable module category of finite groups, and Mazza-Symonds \cite{mazza2019stable} stable module category for groups of type $\Phi_k$. As always, we will be working under \cref{hyp}.

\begin{Rec}
    Let $G$ be a  group.  A complex  $P_\ast$ of projective $kG$--modules is \textit{totally acyclic} if $P_\ast$ is acyclic and $\Hom_{kG}(P_\ast,Q)$ is acyclic for any projective $kG$--module $Q$. 
    A $kG$--module $M$ is \textit{Gorenstein projective} if it is isomorphic to a kernel in a totally acyclic complex of projective $kG$--modules. See \cite[Section 3]{mazza2019stable} for further details. 
\end{Rec}

\begin{Exa}\label{ex-gorestein}
    If $G$ is finite, then a $kG$--module is Gorenstein projective if and only if it is $k$-projective; see for instance \cite[Lemma 3.5]{mazza2019stable}.
\end{Exa}

\begin{Exa}
  Let $G$ be a group. Note that a projective $kG$--module $P$ is Gorenstein projective. Indeed, the complex
    \[
    \ldots\to 0\to P \xrightarrow[]{1}P\to 0\to\ldots
    \]
    is a contractible complex of projective $kG$--modules, hence a totally acyclic complex. It follows that $P$ is isomorphic to one of the syzygies (equivalently, to the image of one of the differentials).
\end{Exa}

\begin{Rem}\label{rem-totally-acy}
   If $G$ is an $\mathrm{H}\Phi_k$-group, then any acyclic complex of projective $kG$-modules is automatically totally acyclic. Indeed, when $G$ is a group of type $\Phi_k$, this was shown in \cite[Lemma 3.14]{mazza2019stable}. The general case follows from the main result of \cite{emmanouil2025total}. Alternatively, one may deduce the statement by induction on $\mathrm{H}_\alpha \Phi_k$, following the approach of \cite[Theorem B]{DT10}. 
\end{Rem}

For an arbitrary discrete ring $R$, not necessarily commutative, we write $\Mod(R)$ to denote the category of all (left) $R$-modules.

\begin{Rec}\label{rec-GP-Frobenius}
    The subcategory $\mathrm{GP}(kG)$ of $\Mod(kG)$ on Gorenstein projective $kG$--modules is extension closed, hence it inherits an exact structure. In fact, it is well-known this structure turns $\mathrm{GP}(kG)$ into a Frobenius exact category (see \cite[Proposition 2.2]{dalezios2018quillen}) where the projective-injective objects are the projective $kG$--modules. We write $\underline{\mathrm{GP}}(kG)$ to denote the corresponding stable category.  
\end{Rec}

\begin{Rem}\label{rem-ac=stmod}
        Let $G$ be an $\mathrm{H}\Phi_k$-group. Then we can identify the stable category $\underline{\mathrm{GP}}(kG)$ with the homotopy category $\mathbf{Ac}(\mathrm{Proj}(kG))$ of acyclic complexes of projective $kG$--modules. Indeed, it is well known that $\underline{\mathrm{GP}}(kG)$ agrees with the homotopy category of totally acyclic complexes (e.g., see \cite[Theorem 4.16]{Beligiannis01012000}), and \cref{rem-totally-acy} completes the claim. 
\end{Rem}

\begin{Def}\label{rec-fib and weakequi}
Let $G$ be an $\mathrm{H}\Phi_k$-group.  Let $f\colon M\to N$ be a homomorphism of $kG$--modules. We say that $f$ is
    \begin{enumerate}
        \item  a \textit{cofibration} (resp. \textit{trivial cofibration}), if it is injective and has Gorenstein projective cokernel (resp. projective cokernel);
        \item \textit{fibration} (resp. \textit{trivial fibration}), if it is surjective (resp. surjective with kernel right $\mathrm{Ext}^1_{kG}$-orthogonal to $\mathrm{GP}(kG)$); 
       \item a \textit{weak equivalence}, if it factors as a trivial cofibration followed by a trivial fibration. 
    \end{enumerate} 
\end{Def}

\begin{Rem}\label{rem-bensoncof}
    Our assumption on the group and on the ring allows us to not distinguish between the class of Gorenstein projective $kG$-modules and the so-called class of Benson's cofibrant $kG$--modules. We refer the reader to \cite[Section 2]{emmanouil2025class}  for further details. 
\end{Rem}

\begin{Prop}
    Let $k$ be as in \cref{hyp} and $G$ an $\mathrm{H}\Phi_k$-group. The classes of fibrations, cofibrations and weak equivalences from  \cref{rec-fib and weakequi} define a  combinatorial model structure on $\Mod(kG)$ whose homotopy category is $\underline{\mathrm{GP}}(kG)$. Moreover, this is a symmetric monoidal model category if we endow $\Mod(kG)$ with the monoidal structure given by $\otimes_k$ equipped with the diagonal $G$-action. 
\end{Prop}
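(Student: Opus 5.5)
The model structure will come from Hovey's correspondence between abelian model structures and cotorsion pairs. I would use the pair of complete cotorsion pairs
\[
(\mathrm{GP}(kG), \mathrm{GP}(kG)^{\perp}) \quad\text{and}\quad (\mathrm{Proj}(kG), \Mod(kG)).
\]
Here $\mathrm{GP}(kG)^{\perp}$ denotes the modules right $\Ext^1_{kG}$-orthogonal to $\mathrm{GP}(kG)$. This is the Gorenstein projective model structure in the style of Hovey and Dalezios--Estrada--P\'erez.

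\textbf{Step 1: translating the definitions.} With $\mathcal W = \mathrm{GP}(kG)^{\perp}$ as the class of trivial objects, the classes in \cref{rec-fib and weakequi} are exactly Hovey's. Cofibrations are monomorphisms with cokernel in $\mathrm{GP}$, and trivial cofibrations are those with cokernel in $\mathrm{GP}\cap\mathcal W=\mathrm{Proj}$. The last equality holds because a Gorenstein projective that is $\Ext^1$-orthogonal to $\mathrm{GP}$ is projective, using \cref{rec-GP-Frobenius}. Fibrations are epimorphisms, and trivial fibrations are epimorphisms with kernel in $\mathcal W$.

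\textbf{Step 2: the hypotheses of Hovey's theorem.} I need three things.
\begin{enumerate}
\item $\mathcal W$ is thick. Closure under extensions and direct summands is formal. The 2-out-of-3 property follows once $\mathrm{GP}$ is resolving and its objects are kernels in totally acyclic complexes, as in the standard argument.
\item $(\mathrm{GP},\mathcal W)$ is a complete cotorsion pair, and it is cogenerated by a set.
\item $\mathrm{GP}\cap \mathcal W=\mathrm{Proj}$.
\end{enumerate}
Item (2) is where the hypothesis that $G$ is $\mathrm{H}\Phi_k$ and $k$ satisfies \cref{hyp} enters. By \cref{rem-bensoncof}, $\mathrm{GP}(kG)$ coincides with Benson's cofibrant modules, namely $\{M : M\otimes_k B \text{ projective}\}$. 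Here $B$ is the module of bounded functions $G\to k$, or the analogue from \cite{emmanouil2025class}. This description exhibits $\mathrm{GP}$ as the left class of a cotorsion pair generated by a set. Indeed, modules $M$ with $M\otimes_k B$ projective are filtered by those of bounded cardinality, so Eklof--Trlifaj gives completeness and cogeneration by a set. The combinatorial conclusion then follows, since $\Mod(kG)$ is locally presentable and both pairs are cogenerated by sets. I expect this step to be the main obstacle, namely verifying the set-cogeneration and completeness. I would cite the deconstructibility results in \cite{emmanouil2025class} rather than reprove them.

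\textbf{Step 3: the homotopy category.} For an abelian model structure of this kind, the homotopy category is the stable category of the Frobenius category of bifibrant objects, modulo projective-injectives. Every object is fibrant, so the bifibrant objects are $\mathrm{GP}(kG)$, and the projective-injectives are $\mathrm{Proj}(kG)$. This gives $\underline{\mathrm{GP}}(kG)$.

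\textbf{Step 4: monoidality.} I would use Hovey's criterion for abelian monoidal model structures. The pushout-product of two cofibrations must be a cofibration, and it must be trivial if either factor is. Concretely, this reduces to two facts about cokernels.
\begin{itemize}
\item $\mathrm{GP}\otimes_k \mathrm{GP}\subseteq \mathrm{GP}$. Tensoring with $B$ shows that $(M\otimes_k N)\otimes_k B$ is a summand-type tensor of $M\otimes B$ with $N$. This is projective because $N$ is $k$-flat and $M\otimes B$ is projective; here $kG\otimes_k N$ with diagonal action is induced, hence projective when $N$ is $k$-projective.
\item $\mathrm{Proj}\otimes_k\mathrm{GP}\subseteq\mathrm{Proj}$, which holds by the same induced-module argument, since Gorenstein projectives are $k$-projective.
\end{itemize}
Finally, for the unit axiom I take a cofibrant replacement of $k$ and check that tensoring it with cofibrant objects is a weak equivalence. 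This follows because the replacement map has kernel in $\mathcal W$ and $k$-flat terms.
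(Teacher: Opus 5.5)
Your overall route is the standard construction that the paper does not carry out but simply cites from \cite[Proposition 4.12]{emmanouil2025class} and \cite[Theorem 6.1]{emmanouil2025monoidal]}. It consists of Hovey's correspondence for the triple $(\mathrm{GP}(kG),\mathcal W,\Mod(kG))$ with $\mathcal W=\mathrm{GP}(kG)^{\perp}$, thickness of $\mathcal W$ from the resolving property, the homotopy category as the stable category of bifibrant objects, and the pushout-product criterion for monoidality. Deferring completeness and set-cogeneration of $(\mathrm{GP}(kG),\mathcal W)$ to the first reference is in line with the paper. Note, though, that your one-line Eklof--Trlifaj sketch is not itself an argument. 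Since $B$ is huge, the needed deconstructibility is a Hill-lemma type result, not a naive cardinality count.

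The genuine gap is the unit axiom. For infinite $G$ the unit $k$ is usually not cofibrant. For example, when $G=\mathbb{Z}$, $k$ has projective dimension $1$ but is not projective, while a Gorenstein projective module of finite projective dimension is projective. So the unit axiom is not automatic; it is the one non-formal point of monoidality.

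Write the cofibrant replacement as $0\to K\to Qk\to k\to 0$ with $K\in\mathcal W$. For cofibrant $X$, the sequence $0\to K\otimes_k X\to Qk\otimes_k X\to X\to 0$ is exact because $X$ is $k$-projective. Hence $Qk\otimes_k X\to X$ is a fibration, and it is a weak equivalence exactly when $K\otimes_k X\in\mathcal W$. Your justification only gives exactness. Nothing formal makes the $\Ext$-orthogonal class $\mathcal W$ stable under $-\otimes_k X$, since there is no duality that moves a non-finitely generated $X$ across $\Ext^1_{kG}(Y,-)$.

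What is missing is that $\mathcal W$ is detected by restriction to finite subgroups $H$. There $\mathcal W_H$ is the class of $kH$-modules of finite projective dimension, because $kH$ is Iwanaga--Gorenstein.

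For $G$ of type $\Phi_k$ this is clean. Every module has finite Gorenstein projective dimension, so $\mathcal W$ is exactly the class of modules of finite projective dimension, and \cref{def-group-type-Phi} detects this on finite subgroups. Moreover, if $P_\bullet$ is a finite projective resolution of $\res^G_H K$, then $P_\bullet\otimes_k\res^G_H X$ is a finite projective resolution of $\res^G_H(K\otimes_k X)$.

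For a general $\mathrm{H}\Phi_k$-group you need this detection statement separately. You can get it by induction over the hierarchy, as in the proof of \cref{prop-conservativity for stab}, which uses only the model structure and not monoidality. Alternatively, invoke \cite[Theorem 6.1]{emmanouil2025monoidal}.
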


\begin{proof}
    The first part follows by \cite[Proposition 4.12]{emmanouil2025class}. The second claim is a particular case of \cite[Theorem 6.1]{emmanouil2025monoidal}. Alternatively, one can can adapt the argument given in the proof of \cite[Proposition 2.12]{Gomez24}. 
\end{proof}

\begin{Def}
    The \textit{stable module $\infty$-category} of $kG$--modules $\mathbf{StMod}(kG)$ is defined as the $\infty$-categorical localization of $\Mod(kG)$ at the class of weak equivalences defined above.
\end{Def}

\begin{Rem}\label{rem-stmod-sym-mon}
    The $\infty$-category $\mathbf{StMod}(kG)$ inherits a symmetric monoidal structure from the one in $\Mod(kG)$ turning it into a symmetric monoidal and stable $\infty$-category, where the tensor product commutes with colimits in each variable separately.     
\end{Rem}

\begin{Rem}\label{rem-homotopycat of stab}
   Let $G$ be a group of type $\Phi_k$. Then  the definition of $\mathbf{StMod}(kG)$ coincides with the one given in \cite{Gomez24}. In particular, we also recover Mazza-Symonds stable category as the homotopy category of $\mathbf{StMod}(kG)$.
\end{Rem}

\begin{Prop}\label{prop-induction-stmod}
    Let  $G$ be an $\mathrm{H}\Phi_k$-group and $i\colon H\to  G$ be an injective group homomorphism. Then  restriction along $i$ 
    \[
    \res^G_H\colon \Mod(kG)\to \Mod(kH)
    \]
   is both a right  Quillen functor and a left Quillen functor, with  left adjoint $\ind_H^G$  given by induction along $i$, and right adjoint $\mathrm{coind}_H^G$ given by coinduction along $i$. In particular, we obtain adjunctions of underlying $\infty$-categories 
    \[
    \begin{tikzcd}[column sep=large, row sep=large]
     \mathbf{StMod}(kH)\arrow[r,"\ind_H^G", yshift=3mm] \arrow[r, yshift=-3mm, "\CoInd_H^G"'] & \mathbf{StMod}(kG) .\arrow[l, "\res^G_H" description] 
    \end{tikzcd}
    \]
\end{Prop}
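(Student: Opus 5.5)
Since $H\subseteq G$ and $G$ is an $\mathrm{H}\Phi_k$-group, $H$ is again an $\mathrm{H}\Phi_k$-group: restrict the contractible finite-dimensional $G$-CW complexes of the hierarchy to $H$ and induct on $\alpha$. Isotropy groups become intersections with $H$, and the base class $\Phi_k$ is closed under subgroups. So both model structures of \cref{rec-fib and weakequi} exist. The adjunctions $\ind_H^G\dashv\res^G_H\dashv\CoInd_H^G$ exist on module categories, with $\ind_H^G=kG\otimes_{kH}-$ and $\CoInd_H^G=\Hom_{kH}(kG,-)$. It therefore suffices to show that $\res^G_H$ preserves fibrations and trivial fibrations (so it is right Quillen) and cofibrations and trivial cofibrations (so it is left Quillen). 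The $\infty$-categorical adjunctions then follow formally, since Quillen adjunctions induce adjunctions between the underlying $\infty$-categories.

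\emph{Left Quillen.} Restriction is exact, so it preserves injectivity. Since $kG$ is a free $kH$-module, $\res^G_H$ sends projective $kG$-modules to projective $kH$-modules, so trivial cofibrations are preserved. For cofibrations we must show that $\res^G_H M$ is Gorenstein projective whenever $M$ is. Take a totally acyclic complex $P_\ast$ of projective $kG$-modules with $M$ as a kernel. Then $\res^G_H P_\ast$ is an acyclic complex of projective $kH$-modules, and it is totally acyclic by \cref{rem-totally-acy} applied to the $\mathrm{H}\Phi_k$-group $H$. Hence $\res^G_H M$ is Gorenstein projective.

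\emph{Right Quillen.} Restriction preserves surjections. For a trivial fibration we must show that if $K$ satisfies $\Ext^1_{kG}(\mathrm{GP}(kG),K)=0$, then $\Ext^1_{kH}(N,\res K)=0$ for every $N\in\mathrm{GP}(kH)$. By the Eckmann–Shapiro lemma,
\[
\Ext^1_{kH}(N,\res^G_H K)\cong \Ext^1_{kG}(\ind_H^G N,K),
\]
so it suffices that $\ind_H^G$ preserves Gorenstein projectives. This holds by the same argument as above: $\ind_H^G$ is exact, it sends projectives to projectives, and it sends acyclic complexes of projectives to acyclic complexes of projectives, which are totally acyclic over $kG$ by \cref{rem-totally-acy}.

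Everything therefore reduces to two inputs: heredity of the $\mathrm{H}\Phi_k$ property to subgroups, and \cref{rem-totally-acy}. I expect the only real point to check to be the heredity of the hierarchy, which is a routine transfinite induction.
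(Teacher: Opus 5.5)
Your proof is correct and rests on the same key input as the paper's proof: induction and restriction preserve Gorenstein projectives, because they send acyclic complexes of projectives to acyclic complexes of projectives, and these are totally acyclic by \cref{rem-totally-acy}. The paper instead uses the mixed Quillen criterion (the left adjoint preserves cofibrations and the right adjoint preserves fibrations, e.g.\ via exactness of $\CoInd_H^G$), whereas you check both classes on the restriction side via Eckmann--Shapiro, which is the same adjunction argument in $\Ext^1$ form; your explicit check that $H$ is again an $\mathrm{H}\Phi_k$-group supplies a point the paper leaves implicit.
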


\begin{proof}
 First, we show that $\res^G_H$ is right Quillen. Recall that restriction is exact, and hence it preserves fibrations. It remains to verify that induction preserves cofibrations. Since $kG$ is projective as a $kH$-module, and $\ind_H^G = kG\otimes_{kH}-$, we obtain that induction is exact. Now, let $M$ be a Gorenstein projective $kH$-module which is the kernel in a totally acyclic complex $P_\ast$ of projective $kH$-modules. We claim that $\ind_H^G(P_\ast)$ is a totally acyclic complex of $kG$-modules. Note that $\ind_H^G(P_\ast)$ is an acyclic complex of projective $kG$-modules since induction is exact and preserves projective modules. By \cref{rem-totally-acy}, $\ind_H^G(P_\ast)$ is totally acyclic. We deduce that $\ind_H^G(M)$ is Gorenstein projective, as desired.

Now, let us show that $\res^G_H$ is left Quillen. Note that coinduction is exact since $\mathrm{coind}^G_H(-)=\mathrm{Hom}_{kH}(kG,-)$ and $kG$ is $kH$-projective. It follows that $\mathrm{coind}^G_H$ preserves fibrations. It remains to verify that $\res^G_H$ preserves cofibrations, but this is clear by \cref{rem-totally-acy}, since restriction is exact and preserves projectives.
\end{proof}

\begin{Prop}\label{prop-conservativity for stab}
     Let $G$ be a group of type $\mathrm{H}\Phi_k$ and let $\cat F_G$ be the family of finite subgroups of $G$. Then the family of functors 
     \[
     \{\mathrm{res}^G_H\colon \mathbf{StMod}(kG) \to\mathbf{StMod}(kH)\}_{H\in \cat F_G}
     \]
     is jointly conservative. 
\end{Prop}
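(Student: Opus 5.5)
Since $\mathbf{StMod}(kG)$ and $\mathbf{StMod}(kH)$ are stable and restriction is exact, joint conservativity reduces to the following: if $X\in\mathbf{StMod}(kG)$ satisfies $\res^G_H X\simeq 0$ for every finite $H\subseteq G$, then $X\simeq 0$. By \cref{rem-ac=stmod} I represent $X$ by a Gorenstein projective $kG$--module $M$, equivalently by an acyclic complex $P_\ast$ of projective $kG$--modules with $M$ as a syzygy. Restriction to $H$ preserves Gorenstein projectives (see the proof of \cref{prop-induction-stmod}). Hence $\res^G_H X\simeq 0$ means that $\res^G_H M$ is zero in $\underline{\mathrm{GP}}(kH)$, that is, projective over $kH$. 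So the claim becomes: a Gorenstein projective $kG$--module $M$ whose restrictions to all finite subgroups are projective is itself projective. Equivalently, $M$ is zero in $\underline{\mathrm{GP}}(kG)$.

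I would argue by transfinite induction on the smallest $\alpha$ with $G\in \mathrm{H}_\alpha\Phi_k$.

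For the base case $\alpha=0$, $G$ is of type $\Phi_k$. By \cref{def-group-type-Phi}, $M$ has finite projective dimension over $kG$, because each $\res^G_H M$ is projective. A Gorenstein projective module of finite projective dimension is projective. Indeed, if $0\to Q_n\to\cdots\to Q_0\to M\to 0$ is a finite projective resolution, then $\Ext^i_{kG}(M,Q)=0$ for all $i\ge1$ and all projective $Q$. Using this vanishing and dimension shifting, the resolution splits from the left, so $M$ is projective.

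For the inductive step, let $G$ act on a contractible finite-dimensional $G$-CW complex $Y$ whose cell stabilisers $K$ lie in $\mathrm{H}_\beta\Phi_k$ for some $\beta<\alpha$. Every finite subgroup of such a $K$ is a finite subgroup of $G$, so $\res^G_K M$ has projective restrictions to all finite subgroups of $K$. By induction, together with the fact that subgroups of $\mathrm{H}_\beta\Phi_k$-groups are again $\mathrm{H}_\beta\Phi_k$, each $\res^G_K M$ is projective over $kK$. Next, the augmented cellular chain complex of $Y$ gives an exact sequence
\[
0\to C_n\to\cdots\to C_0\to k\to 0
\]
with each $C_i$ a direct sum of modules $k(G/K)$. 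This sequence is $k$-split, since it consists of $k$-free modules and is exact. Tensoring over $k$ with $M$ yields an exact complex $C_\ast\otimes_k M\to M$. Each $k(G/K)\otimes_k M\cong \ind_K^G\res^G_K M$ is projective over $kG$. Therefore $M$ has finite projective dimension, and being Gorenstein projective, $M$ is projective by the same argument as in the base case.

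I expect the main obstacle to be the bookkeeping in the hierarchy. One needs closure of $\mathrm{H}_\beta\Phi_k$ under subgroups, and one needs the induction to be organised correctly as a statement about all groups of level $<\alpha$ at once. Closure under subgroups follows by restricting the action on the complex to the subgroup, together with closure of $\Phi_k$ under subgroups. The remaining steps are formal.
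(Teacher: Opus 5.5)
Your proof is correct and follows the same route as the paper: transfinite induction on the hierarchy level, with the inductive step obtained by tensoring the augmented cellular chain complex of the contractible $G$-CW complex with $M$ and using $k(G/K)\otimes_k M\cong \ind_K^G\res^G_K M$. The differences are cosmetic. You phrase the vanishing module-theoretically (a Gorenstein projective module of finite projective dimension is projective), where the paper splices fiber sequences in $\mathbf{StMod}(kG)$, and you prove the base case directly from the definition of type $\Phi_k$ instead of citing Mazza--Symonds. Also, the subgroup-closure of $\mathrm{H}_\beta\Phi_k$ you invoke is not actually needed, since cell stabilisers lie in $\mathrm{H}_\beta\Phi_k$ by definition.
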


\begin{proof}
    Let $\alpha$ be an ordinal such that $G\in \mathrm{H}_\alpha\Phi_k$. We argue by induction on $\alpha$. If $\alpha=0$, then the statement follows by \cite[Lemma 5.2]{mazza2019stable} (see \cref{rem-homotopycat of stab}). Now, assume that $\alpha>0$. Then there is a contractible finite-dimensional $G$-CW-complex $X$ such that the isotropy of each 0-dimensional cell is in $\mathrm{H}_\beta\Phi_k$ for some $\beta<\alpha$. Let $\mathcal{H}$ be the family of subgroups of $G$ which appear as the  isotropy group of some cell of $X$.   We claim that the family of functors  
    \[
     \{\mathrm{res}^G_H\colon \mathbf{StMod}(kG) \to \mathbf{StMod}(kH)\}_{H\in \mathcal{H}}
     \]
     is jointly conservative. Note that from this claim  the initial family is jointly-conservative as well by the inductive hypothesis. Let $M$ be a $kG$--module which is trivial in $\mathbf{StMod}(kH)$ for all $H\in \mathcal{H}$.  We want to show that $M\simeq 0$ in $\mathbf{StMod}(kG)$.   
    Recall that the augmented chain complex $\overline{C}_\ast(X)$ of $X$ is an exact sequence of $kG$--modules of the form   
   \begin{equation}\label{eq-cellular chain}
        0\to C_n\to C_{n-1} \to \ldots \to C_0\to k \to 0
   \end{equation}
    where each $C_i$ a coproduct of permutation $kG$--modules $k(G/H)$ with $H\in \mathcal{H}$. In particular, it is a split exact complex of $k$--modules (as it is an acyclic complex of $k$--projective modules). 
    Now,  note that tensoring the complex \eqref{eq-cellular chain} with $M$ and using the projection formula, gives us an exact complex of $kG$--modules where each term is a coproduct of modules of the form $\ind_H^G\res_H^G M$ for some $H\in \mathcal{H}$.   Since each short exact sequence in $\Mod(kG)$ induces a fiber sequence in $\mathbf{StMod}(kG)$, an inductive argument on the length of the complex \eqref{eq-cellular chain} gives us that $M$ is the cofiber in $\mathbf{StMod}(kG)$ of two trivial objects, hence it is trivial as well.
\end{proof}

We state the following criterion for showing that a stable $\infty$-category is compactly generated, which we will use repeatedly throughout the paper.

\begin{Prop}\label{prop-compactgen}
      Let $\cat I$ be a set, and let 
 \[
 \{f_i^\ast\colon \cat C \to \cat C(i)\}_{i\in \cat I}
 \]
be  a family of coproduct preserving triangulated functors between triangulated categories with small coproducts. Assume in addition that:
    \begin{enumerate}
        \item For any $i\in \cat I$, the functor $f_i^\ast$ admits a left adjoint $(f_i)_!$. 
       \item The category $\cat C(i)$ is compactly generated for each $i \in \cat I$.  
    \item The collection of functors $\{f^\ast_i\}_{i\in  \cat I}$ is jointly conservative. 
    \end{enumerate}
    Then $\cat C$ is compactly generated. 
\end{Prop}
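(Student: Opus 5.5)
The candidate set of compact generators is
\[
\cat G=\SET{(f_i)_!(x)}{i\in\cat I,\ x\in \cat G_i},
\]
where $\cat G_i$ is a set of compact generators of $\cat C(i)$, which exists by hypothesis (2). Since $\cat I$ is a set and each $\cat G_i$ is a set, $\cat G$ is again a set. Two properties must be checked: that every object of $\cat G$ is compact in $\cat C$, and that $\cat G$ generates $\cat C$ in the sense that $\Hom_{\cat C}(\Sigma^n g, Y)=0$ for all $g\in\cat G$ and $n\in\mathbb Z$ forces $Y\simeq 0$.

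Compactness comes from the adjunction together with the fact that $f_i^\ast$ preserves coproducts. For a family $\{Y_j\}$ in $\cat C$ we have natural isomorphisms
\[
\Hom_{\cat C}\bigl((f_i)_!x,\textstyle\bigoplus_j Y_j\bigr)\cong \Hom_{\cat C(i)}\bigl(x, f_i^\ast\textstyle\bigoplus_j Y_j\bigr)\cong \Hom_{\cat C(i)}\bigl(x,\textstyle\bigoplus_j f_i^\ast Y_j\bigr).
\]
Since $x$ is compact in $\cat C(i)$, the last group is $\bigoplus_j\Hom_{\cat C(i)}(x,f_i^\ast Y_j)\cong\bigoplus_j\Hom_{\cat C}((f_i)_!x,Y_j)$. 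One should check that the composite isomorphism agrees with the canonical comparison map, which is routine naturality of the adjunction. Hence $(f_i)_!x$ is compact.

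For generation, suppose $\Hom_{\cat C}(\Sigma^n(f_i)_!x,Y)=0$ for all $i$, all $x\in\cat G_i$ and all $n$. The left adjoint of the triangulated functor $f_i^\ast$ is itself triangulated, so it commutes with suspension, and adjunction gives $\Hom_{\cat C(i)}(\Sigma^n x,f_i^\ast Y)=0$. Because $\cat G_i$ generates $\cat C(i)$, it follows that $f_i^\ast Y\simeq 0$ for every $i$. Joint conservativity, hypothesis (3), then yields $Y\simeq 0$: the map $Y\to 0$ becomes an isomorphism after applying every $f_i^\ast$, so it is an isomorphism.

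The only delicate point is ensuring that $(f_i)_!$ commutes with $\Sigma$ (or, alternatively, closing $\cat G$ under shifts), and that "compactly generated" is taken in the usual sense of a set of compact objects detecting zero. In the stable $\infty$-categorical setting where the result is applied, the left adjoint of an exact functor is automatically exact, so this point causes no trouble.
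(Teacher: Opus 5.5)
Your proposal is correct and matches the paper's proof: the same generating set $\{(f_i)_!x\}$, compactness from the adjunction together with coproduct preservation of $f_i^\ast$, and generation from the adjunction plus joint conservativity (objects orthogonal to the generators lie in $\bigcap_i \Ker(f_i^\ast)=0$). Your remarks on compatibility with $\Sigma$ and on the canonical comparison map fill in details that the paper leaves implicit.
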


\begin{proof}
      For each $i\in \cat I$, fix  a set of compact generators $\mathcal{X}_i$  of  $\cat C(i)$. Consider the set 
\[
\mathcal{X}=\{(f_i)_!(x)\mid x\in \mathcal{X}_i, \, i\in \cat I\}. 
\]
We claim that $\mathcal X$  is a set of compact generators of $\cat C$. Indeed, by our assumptions each $f_!(x)$ is compact in $\cat C$. Now,  note that an element $y\in \mathcal{X}^\perp$ is necessarily in 
\[
\bigcap_{i\in \cat I} \mathrm{Ker}(f^\ast_i)
\]
which is trivial by $(c)$. Hence  $y\simeq 0$ as desired. 
\end{proof}

As a first application of this criterion we deduce:
\begin{Cor}\label{cor-stmod-compactly-gen}
    Let $G$ be a group of type $\mathrm{H}\Phi_k$. Then $\mathbf{StMod}(kG)$ is compactly generated. 
\end{Cor}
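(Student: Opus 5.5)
We apply \cref{prop-compactgen} to the family of restriction functors
\[
\{\res^G_H\colon \mathbf{StMod}(kG)\to \mathbf{StMod}(kH)\}_{H\in \cat F_G},
\]
where $\cat F_G$ is the family of finite subgroups of $G$, and pass to homotopy categories. Since $\cat F_G$ need not be a set of conjugacy-class representatives, we first replace it by a set of representatives of its conjugacy classes. Joint conservativity is unaffected by this replacement: conjugation induces equivalences $\mathbf{StMod}(kH)\simeq \mathbf{StMod}(k\,{}^gH)$ compatible with restriction, and the finite subgroups of $G$ form a set in any case.

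Next we check the three hypotheses of \cref{prop-compactgen}.

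\emph{Exactness and coproducts.} By \cref{prop-induction-stmod}, $\res^G_H$ is both a left and a right Quillen functor. It therefore induces an exact functor of stable $\infty$-categories which, as a left adjoint (to $\CoInd_H^G$), preserves all colimits, and in particular coproducts. On homotopy categories this gives a coproduct-preserving triangulated functor.

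\emph{Condition (a).} \cref{prop-induction-stmod} also provides the left adjoint $\ind_H^G$.

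\emph{Condition (c).} Joint conservativity is exactly \cref{prop-conservativity for stab}.

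\emph{Condition (b).} It remains to see that $\mathbf{StMod}(kH)$ is compactly generated for each finite group $H$, and this is the only step needing some input. For $H$ finite, \cref{ex-gorestein} identifies Gorenstein projective $kH$-modules with $k$-projective ones. Hence $\underline{\mathrm{GP}}(kH)$ is the stable category of $k$-projective $kH$-modules relative to $kH$-projectives, and by \cref{rem-ac=stmod} it is equivalent to $\mathbf{Ac}(\Proj(kH))$.

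We would cite the known result that for $H$ finite and $k$ Noetherian of finite global dimension this category is compactly generated. For $H$ of type $\Phi_k$ this appears in \cite{mazza2019stable}, and finite groups are of type $\Phi_k$, so the needed statement is in \cite{mazza2019stable} (see also \cite{Gomez24}). One can also argue directly: $\mathbf{K}(\Proj(kH))$ is compactly generated for $kH$ Noetherian, and $\mathbf{Ac}(\Proj(kH))$ is a finite (Verdier) quotient of it, since it is the kernel of the localization onto $\mathbf{D}(kH)$ obtained by killing the image of the compacts of $\mathbf{D}(kH)$. A compact generating set can then be taken to be the images of the finitely generated $k$-projective $kH$-modules, for instance the Tate-type generators coming from $k(H/L)$-lattices.

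With (a), (b) and (c) in place, \cref{prop-compactgen} shows that the homotopy category of $\mathbf{StMod}(kG)$ is compactly generated. Explicitly, a set of compact generators is
\[
\{\ind_H^G x \mid H\in \cat F_G,\ x \text{ a compact generator of } \mathbf{StMod}(kH)\}.
\]
Compact generation of a stable $\infty$-category is detected on its homotopy category, so $\mathbf{StMod}(kG)$ is compactly generated. The expected obstacle is only (b), which we treat by citation.
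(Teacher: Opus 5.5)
Your proposal is correct and follows the paper's argument: the paper's proof simply combines joint conservativity of restriction to finite subgroups (\cref{prop-conservativity for stab}) with the criterion \cref{prop-compactgen}. Like you, it uses the induction left adjoints of \cref{prop-induction-stmod} and takes compact generation of $\mathbf{StMod}(kH)$ for finite $H$ as known. Your passage to conjugacy-class representatives is harmless but unnecessary, since the finite subgroups of $G$ already form a set.
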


\begin{proof}
    This follows from \cref{prop-conservativity for stab} combined with \cref{prop-compactgen}.
\end{proof}

\section{Homotopy categories of projective $kG$--modules}\label{sec-kproj}
In this section, we introduce the homotopy category of projective $kG$--modules, discuss when it gives rise to a compactly-generated tt-category, and finally characterize its compact objects.

\begin{Def}\label{def-kproj}
    Let $G$ be group and $k$ a commutative ring. We let $\mathbf{K}(\Mod(kG))$ denote the homotopy category of $kG$-modules which is a tensor-triangulated category with symmetric monoidal structure given by the tensor product over $k$. 
    We let $\mathbf{K}(\Proj(kG))$ denote the homotopy category of projective $kG$-modules which is a localizing tensor ideal in the category $\mathbf{K}(\Mod(kG))$.
\end{Def}
\begin{Rem}
For an arbitrary discrete group $G$, it is not clear whether the tensor product over $k$ endows $\mathbf{K}(\Proj(kG))$ with a monoidal structure, since a suitable monoidal unit may fail to exist (it is however a non-unital symmetric monoidal structure). We will address this issue, at least for groups of type $\Phi_k$ in \cref{cor-kproj-ttcat}. 
\end{Rem}

We now discuss the functoriality of the homotopy category of $kG$-modules.

\begin{Cons}\label{cons-ind-res-coind}
Given a subgroup $H \subseteq G$, we have an adjoint triple
  \[
    \begin{tikzcd}[column sep=large, row sep=large]
     \Mod(kH) \arrow[r,"\ind_H^G", yshift=3mm] \arrow[r, yshift=-3mm, "\CoInd_H^G"'] & \Mod(kG) \arrow[l, "\res^G_H" description] 
    \end{tikzcd}
    \]
of abelian categories. Passing to the associated homotopy categories of chain complexes we obtain another adjoint triple
 \[
    \begin{tikzcd}[column sep=large, row sep=large]
     \mathbf{K}(\Mod(kH)) \arrow[r,"\ind_H^G", yshift=3mm] \arrow[r, yshift=-3mm, "\CoInd_H^G"'] & \mathbf{K}(\Mod(kG)) \arrow[l, "\res^G_H" description] 
    \end{tikzcd}
    \]
which we denote in the same way. It is easy to see that the restriction functor is symmetric monoidal, and so the left and right adjoints are respectively oplax and lax monoidal. 
\end{Cons}

\begin{Prop}\label{prop-ind-res-kproj-mod}
    Let $H\subseteq G$ be a subgroup. The induction-restriction adjunction from \cref{cons-ind-res-coind}
    descends to an adjunction 
    \[
    \ind^G_H \colon \mathbf{K}(\Proj(kH))\rightleftarrows  \mathbf{K}(\Proj(kG))\colon \res_H^G. 
    \]
\end{Prop}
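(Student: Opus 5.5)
The plan is to check directly that both functors of \cref{cons-ind-res-coind} preserve the full subcategories of complexes of projectives, and then restrict the adjunction.

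\textbf{Induction preserves projective complexes.} Induction $\ind_H^G = kG\otimes_{kH}-$ is applied degreewise to a complex. Since $\ind_H^G(kH)\cong kG$ and induction commutes with direct sums, it sends free $kH$-modules to free $kG$-modules. It also preserves retracts, so it sends projective $kH$-modules to projective $kG$-modules. Hence $\ind_H^G$ maps $\mathbf{K}(\Proj(kH))$ into $\mathbf{K}(\Proj(kG))$.

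\textbf{Restriction preserves projective complexes.} Restriction is also applied degreewise. As a left $kH$-module, $kG$ is free: choosing right coset representatives gives $kG\cong\bigoplus_{Hg\in H\backslash G} kH\cdot g$. Therefore $\res^G_H$ sends free $kG$-modules to free $kH$-modules, and, since it preserves retracts, projectives to projectives. Hence $\res^G_H$ maps $\mathbf{K}(\Proj(kG))$ into $\mathbf{K}(\Proj(kH))$.

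\textbf{The adjunction restricts.} Both $\mathbf{K}(\Proj(kH))\subseteq\mathbf{K}(\Mod(kH))$ and $\mathbf{K}(\Proj(kG))\subseteq\mathbf{K}(\Mod(kG))$ are full subcategories. So for $P\in\mathbf{K}(\Proj(kH))$ and $Q\in\mathbf{K}(\Proj(kG))$, the natural bijection
\[
\Hom_{\mathbf{K}(\Mod(kG))}(\ind_H^G P, Q)\cong \Hom_{\mathbf{K}(\Mod(kH))}(P,\res^G_H Q)
\]
is a natural bijection of Hom-sets in the subcategories. The unit and counit of the original adjunction are maps between objects that lie in the subcategories, so they restrict as well. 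This gives the claimed adjunction. The restricted functors are triangulated and preserve coproducts, since the inclusions are closed under shifts, cones and coproducts.

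There is no serious obstacle. The only point needing care is the freeness of $kG$ over $kH$, i.e.\ the coset decomposition, which is what makes restriction preserve projectives. If one also wants the statement at the level of stable $\infty$-categories, one applies the same argument to the dg-nerves or localizations, where fully faithful inclusions again allow the adjunction to restrict.
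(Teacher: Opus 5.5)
Your proposal is correct and follows essentially the same approach as the paper: both induction and restriction preserve projective modules, the key input being that $kG$ is free (hence projective) over $kH$, so the adjunction on $\mathbf{K}(\Mod(-))$ restricts along the full inclusions. Your write-up just supplies the details (the coset decomposition, retracts, and the unit/counit restricting) that the paper's one-line proof leaves implicit.
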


\begin{proof}
  We need to verify that the restriction and induction functors preserve projective objects. But this follows since $kG$ is projective as $kH$--module.   
\end{proof}

\begin{Prop}\label{prop-kproj-conservativity}
    Let $k$ be as in \cref{hyp} and $G$ be a group of type $\Phi_k$. 
     Then the family of functors 
\[
\{\res^G_H\colon  \mathbf{K}(\mathrm{Proj}(kG)) \to \mathbf{K}(\mathrm{Proj}(kH)) \mid H \subseteq G 
\; \mathrm{finite} \}
\]
is jointly conservative. 
\end{Prop}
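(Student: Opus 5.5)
We need to show: if $P_\ast$ is a complex of projective $kG$-modules such that $\res^G_H P_\ast$ is contractible for every finite $H\subseteq G$, then $P_\ast$ is contractible. Since the restriction functors are exact (triangulated), joint conservativity reduces to detecting zero objects. The plan has three steps: show $P_\ast$ is acyclic, identify it with a stable-module object, and then invoke \cref{prop-conservativity for stab}.

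\textbf{Step 1: acyclicity.} Take $H=1$, which is finite. Then $\res^G_1 P_\ast$ is contractible as a complex of $k$-modules, hence acyclic. Acyclicity is a property of the underlying complex of abelian groups, so $P_\ast$ is an acyclic complex of projective $kG$-modules.

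\textbf{Step 2: passage to the stable module category.} By \cref{rem-totally-acy}, $P_\ast$ is totally acyclic. By \cref{rem-ac=stmod}, it corresponds to the Gorenstein projective module $M = Z_0(P_\ast)$, viewed in $\underline{\mathrm{GP}}(kG)$, which is the homotopy category of $\mathbf{StMod}(kG)$. The same identification holds for each finite $H$, since groups of type $\Phi_k$ are closed under subgroups. This correspondence is compatible with restriction: restriction is exact and preserves projectives, so it commutes with taking cycles, and $\res^G_H P_\ast$ corresponds to $\res^G_H M$.

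\textbf{Step 3: conclusion.} For each finite $H$, the complex $\res^G_H P_\ast$ is contractible, so it is zero in $\mathbf{Ac}(\Proj(kH))$. Hence $\res^G_H M\simeq 0$ in $\mathbf{StMod}(kH)$. By \cref{prop-conservativity for stab} (a group of type $\Phi_k$ is an $\mathrm{H}_0\Phi_k$-group), $M\simeq 0$ in $\mathbf{StMod}(kG)$. Under the equivalence $\underline{\mathrm{GP}}(kG)\simeq \mathbf{Ac}(\Proj(kG))$, this says $P_\ast$ is contractible.

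\textbf{Main obstacle.} The delicate point is the compatibility in Step 2: we need the equivalence $\underline{\mathrm{GP}}\simeq\mathbf{Ac}(\Proj)$ to commute with restriction, up to natural isomorphism, on zero objects. This should follow directly from the explicit description via cycle modules.

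An alternative that avoids this identification is to work with the complex directly. Tensor $P_\ast$ with the finite-length complex from \cref{ex-fin-dim-model-Phi}, or use the type $\Phi_k$ property applied to the cycle modules $Z_n(P_\ast)$. If $\res^G_H P_\ast$ is contractible, each $\res^G_H Z_n$ is projective. Type $\Phi_k$ then gives that each $Z_n$ has finite projective dimension over $kG$. A Gorenstein projective module of finite projective dimension is projective, so all $Z_n$ are projective and the acyclic complex $P_\ast$ splits, hence is contractible. I would present this second argument, as it is cleaner, and keep the stable-module argument as a remark.
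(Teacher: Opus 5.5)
Your Steps 1--3 are the paper's proof. The paper also uses $H=1$ to land in $\mathbf{Ac}(\Proj(kG))$, notes that restriction preserves acyclic complexes of projectives, and then appeals to \cref{rem-ac=stmod} and \cref{prop-conservativity for stab}. It leaves the compatibility you flag as the ``main obstacle'' implicit, and your cycle-module justification is the right way to fill it.

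The argument you say you would actually present is a genuinely different route, and it is correct:
\begin{enumerate}
\item The cycles of a contractible complex of projective $kH$-modules are direct summands of its terms, so each $\res^G_H Z_n$ is projective.
\item The defining property of type $\Phi_k$ then gives $\mathrm{pd}_{kG} Z_n<\infty$.
\item Each $Z_n$ is Gorenstein projective, because $P_\ast$ is totally acyclic by \cref{rem-totally-acy}.
\item A Gorenstein projective module of finite projective dimension is projective. Indeed, $\Ext^{\geq 1}$ into projectives vanishes, so by dimension shifting $\Ext^1$ vanishes into its first syzygy, which has finite projective dimension; hence the defining short exact sequence splits.
\item With all $Z_n$ projective, $P_\ast$ splits and is therefore contractible.
\end{enumerate}

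This bypasses the stable module category entirely. In effect it reproves the $\alpha=0$ case of \cref{prop-conservativity for stab} (that is, \cite[Lemma 5.2]{mazza2019stable}) directly from the definition, with no identification to check.

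The paper's route buys generality in exchange. It only needs two facts: acyclic complexes of projectives are totally acyclic, and restriction to finite subgroups is jointly conservative on $\mathbf{StMod}$. Both hold for $\mathrm{H}\Phi_k$-groups, so the paper's argument extends verbatim to that class. Your direct argument uses the $\Phi_k$ property itself, which such groups need not have.

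Finally, drop the aside about tensoring with the complex of \cref{ex-fin-dim-model-Phi}. That needs a finite-length permutation resolution of $k$ as in \cref{ex-special-cx}, for instance from a finite-dimensional model for $\underline{E}G$. Such a resolution is not known to exist for an arbitrary group of type $\Phi_k$ (\cref{rem-conjecture Phi}).
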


\begin{proof}
    Let $M$ be in $\mathbf{K}(\mathrm{Proj}(kG))$ such that $\res^G_H(M)\simeq 0$ for all finite subgroups $H$ of $G$. In particular, the case $H=1$ implies that $M\in \mathbf{Ac}(\mathrm{Proj}(kG))$, the full subcategory of $\mathbf{K}(\mathrm{Proj}(kG))$ on  acyclic complexes. We observe that the restriction functor preserves acyclic complexes of projective modules, so there is a commutative diagram 
    \[
    \begin{tikzcd}
        \mathbf{Ac}(\mathrm{Proj}(kG)) \arrow[d,"\res^G_H"'] \arrow[r, hook] & \mathbf{K}(\Proj(kG)) \arrow[d,"\res^G_H"]\\
        \mathbf{Ac}(\mathrm{Proj}(kH)) \arrow[r, hook] & \mathbf{K}(\Proj(kH)). 
    \end{tikzcd}
    \]
    Therefore we are reduce to showing that $\{\res^G_H \mid H\subseteq G \; \mathrm{finite}\}$ is jointly conservative on categories of acyclic complexes. But by \cref{rem-ac=stmod} we can identify $\mathbf{Ac}(\mathrm{Proj}(kG))$ with the stable module category $\mathbf{StMod}(kG)$, and we already showed that the family of restriction functors is conservative for the stable module category; see \cref{prop-conservativity for stab}.
\end{proof}
We are ready to prove compact generation for the homotopy category of projective $kG$--modules.  
\begin{Prop}\label{prop-kproj-compl}
    Let $k$ be as in \cref{hyp} and $G$ be a group of type $\Phi_k$. Then  $\mathbf{K}(\mathrm{Proj}(kG))$ is compactly generated.  
\end{Prop}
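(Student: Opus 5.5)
We apply \cref{prop-compactgen} to the family of restriction functors
\[
\{\res^G_H\colon \mathbf{K}(\Proj(kG)) \to \mathbf{K}(\Proj(kH))\}_{H\subseteq G \text{ finite}},
\]
where $H$ runs over a set of representatives of the finite subgroups. We check its three hypotheses in turn.

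\emph{Adjoints and exactness.} Each restriction functor is triangulated. It preserves coproducts, because coproducts in $\mathbf{K}(\Proj(kG))$ are computed degreewise, as in $\mathbf{K}(\Mod(kG))$, and restriction of modules commutes with direct sums. Hypothesis (a), the existence of a left adjoint, is \cref{prop-ind-res-kproj-mod}, which supplies $\ind_H^G$.

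\emph{Joint conservativity.} Hypothesis (c) is exactly \cref{prop-kproj-conservativity}; this is where the assumption that $G$ is of type $\Phi_k$ and $k$ satisfies \cref{hyp} enters.

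\emph{Compact generation for finite groups.} The remaining hypothesis (b) asks that $\mathbf{K}(\Proj(kH))$ be compactly generated for each finite subgroup $H$, and this is the step needing an outside input. For $H$ finite, $kH$ is a Noetherian ring, finitely generated as a module over the Noetherian ring $k$. Jørgensen's theorem (see also Neeman's ``The homotopy category of flat modules, and Grothendieck duality'') states that for any ring $R$ that is left Noetherian and over which flat left modules have finite projective dimension (for instance, right coherent with this property), $\mathbf{K}(\Proj(R))$ is compactly generated. Since $k$ has finite global dimension, flat $kH$-modules are $k$-flat, hence of finite projective dimension over $k$, and therefore of finite projective dimension over $kH$. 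Alternatively, one can cite the statement directly from Benson--Krause or from \cite{BBIKP}, where compact generation of $\mathbf{K}(\Proj(kH))$ is established for finite groups and regular (in particular finite global dimension Noetherian) $k$. In either form, $\mathbf{K}(\Proj(kH))$ is compactly generated, with compact objects built from $\ind$-type duals of bounded complexes of finitely generated modules.

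With (a)--(c) verified, \cref{prop-compactgen} shows that $\mathbf{K}(\Proj(kG))$ is compactly generated. Explicitly, a set of compact generators is
\[
\{\ind_H^G(x) \mid H \text{ finite}, \ x \text{ a compact generator of } \mathbf{K}(\Proj(kH))\}.
\]
The main point requiring care is the justification of (b) from the literature: one must confirm that the hypotheses of the cited compact generation theorem hold for $kH$ under \cref{hyp}. The rest is formal.
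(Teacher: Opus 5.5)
Your proposal is correct and is essentially the paper's proof: you apply \cref{prop-compactgen} to the restrictions to finite subgroups, taking (a) from \cref{prop-ind-res-kproj-mod}, (c) from \cref{prop-kproj-conservativity}, and (b) from the finite-group case, which the paper takes from \cite{BBIKP} exactly as in your alternative citation. One caution about your first justification of (b): finite projective dimension over $k$ does not in general imply finite projective dimension over $kH$ (the trivial module over $\mathbb{F}_pC_p$ is a counterexample), so that ``therefore'' is unjustified as written; the conclusion for flat $kH$-modules is nonetheless true (for instance because $kH$ is Iwanaga--Gorenstein under \cref{hyp}), and Neeman's theorem, which only needs coherence of $kH$, avoids the issue altogether.
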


\begin{proof}
In view of  \cref{prop-kproj-conservativity} and the fact that $\mathbf{K}(\mathrm{Proj}(kH))$ is compactly generated for any finite group $H$, we can invoke \cref{prop-compactgen} to deduce the result.
\end{proof}

As a consequence of compact generation of $\mathbf{K}(\mathrm{Proj}(kG))$, we deduce that it fits into a recollement generalizing the one in the finite case:

\begin{Prop}\label{recollement for kG}
   Let $k$ be as in \cref{hyp} and let $G$ be group of type $\Phi_k$. Then the canonical functor $\mathbf{q}\colon\mathbf{K}(\mathrm{Proj}(kG))\to \mathbf{D}(kG)$ induces  a recollement 
    \begin{center}
\begin{tikzcd}
\mathbf{StMod}(kG) \arrow[tail]{rr} &&  \mathbf{K}(\mathrm{Proj}(kG))
\arrow[twoheadrightarrow,yshift=1.5ex]{ll}[swap]{\mathbf{t}}
\arrow[twoheadrightarrow,yshift=-1.5ex]{ll}
  \arrow[twoheadrightarrow]{rr} && \mathbf{D}(kG)
   \arrow[tail,yshift=1.5ex]{ll}[swap]{\mathbf{p} }
   \arrow[tail,yshift=-1.5ex]{ll}{}
\end{tikzcd}    
\end{center}
where the functor  $\mathbf{p}$ is induced by taking $K$--projective resolutions.
\end{Prop}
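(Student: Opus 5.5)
The recollement will come from a localization sequence $\mathbf{Ac}(\Proj(kG)) \hookrightarrow \mathbf{K}(\Proj(kG)) \xrightarrow{\mathbf{q}} \mathbf{D}(kG)$, where $\mathbf{q}$ is the composite $\mathbf{K}(\Proj(kG))\to \mathbf{K}(\Mod(kG)) \to \mathbf{D}(kG)$. Its kernel is, by definition, the full subcategory of acyclic complexes of projectives, and \cref{rem-ac=stmod} identifies this with $\underline{\mathrm{GP}}(kG)$, that is, with the homotopy category of $\mathbf{StMod}(kG)$. The task is therefore to produce the adjoints on both sides, and I will do this via Brown representability using the compact generation from \cref{prop-kproj-compl}.

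\textbf{Step 1 (left adjoint $\mathbf{p}$ and the quotient).} Every complex of $kG$-modules admits a K-projective resolution, for instance by Spaltenstein's construction; this gives a functor $\mathbf{p}\colon \mathbf{D}(kG)\to \mathbf{K}(\Proj(kG))$. For $P$ K-projective and $X$ arbitrary in $\mathbf{K}(\Proj(kG))$ we have $\Hom_{\mathbf{K}}(P,X)\cong \Hom_{\mathbf{D}}(P,X)$, so $\mathbf{p}\dashv \mathbf{q}$. The unit $\id\to \mathbf{q}\mathbf{p}$ is an isomorphism, so $\mathbf{p}$ is fully faithful and $\mathbf{q}$ is a Verdier quotient by its kernel $\mathbf{Ac}(\Proj(kG))$. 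It follows that the inclusion $\mathbf{Ac}(\Proj(kG))\hookrightarrow \mathbf{K}(\Proj(kG))$ has a right adjoint, obtained by taking the cone of the counit $\mathbf{p}\mathbf{q}X\to X$.

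\textbf{Step 2 (the remaining adjoints).} To complete the recollement I need a right adjoint of $\mathbf{q}$, or equivalently a left adjoint $\mathbf{t}$ of the inclusion of acyclics. Since $\mathbf{K}(\Proj(kG))$ is compactly generated (\cref{prop-kproj-compl}), Brown representability says that a coproduct-preserving exact functor out of it has a right adjoint. The functor $\mathbf{q}$ preserves coproducts, because coproducts of projective complexes are computed degreewise and homology commutes with coproducts. Hence $\mathbf{q}$ has a right adjoint, which is automatically fully faithful because $\mathbf{q}$ is a quotient. The standard theory of localization sequences then says that the inclusion of the kernel acquires a left adjoint $\mathbf{t}$. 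The same adjoints can alternatively be obtained at the model-categorical/$\infty$-categorical level, which upgrades the statement to $\mathbf{StMod}(kG)$ as an $\infty$-category.

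\textbf{Main obstacle.} The delicate point is in Step 2: one needs the acyclic subcategory to be well behaved, namely closed under products as well as coproducts, so that the adjoints exist and the identification with $\mathbf{StMod}(kG)$ is compatible. Brown representability applied to $\mathbf{q}$ sidesteps this, provided \cref{prop-kproj-compl} is genuinely available, and that is exactly where the type $\Phi_k$ hypothesis enters. I would also check that \cref{rem-ac=stmod} applies, which requires $G$ to be $\mathrm{H}\Phi_k$; this holds since type $\Phi_k$ groups lie in $\mathrm{H}_0\Phi_k$.
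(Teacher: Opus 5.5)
Your argument is essentially the paper's. You take the localization sequence $\mathbf{Ac}(\Proj(kG))\to\mathbf{K}(\Proj(kG))\to\mathbf{D}(kG)$ with $\mathbf{p}\dashv\mathbf{q}$ coming from K-projective resolutions. You then apply Brown representability to $\mathbf{q}$, using compact generation and the fact that $\mathbf{q}$ preserves coproducts, to get its right adjoint. Finally you identify the kernel with $\mathbf{StMod}(kG)$.

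One correction: you have the two adjoints of the inclusion of acyclics swapped.

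\begin{itemize}
\item The cone $C$ of the counit $\mathbf{p}\mathbf{q}X\to X$ gives the \emph{left} adjoint of the inclusion, not the right one. For $A$ acyclic, $\Hom(\mathbf{p}\mathbf{q}X,A)=0=\Hom(\mathbf{p}\mathbf{q}X[1],A)$, so $\Hom(C,A)\cong\Hom(X,A)$. In general $\Hom(A,\mathbf{p}\mathbf{q}X)\neq 0$, so this is not a right adjoint.
\item Dually, the right adjoint $\mathbf{q}_\rho$ of $\mathbf{q}$ obtained from Brown representability produces the \emph{right} adjoint of the inclusion. It is given by the fibre of $X\to\mathbf{q}_\rho\mathbf{q}X$, using $\Hom(A,\mathbf{q}_\rho\mathbf{q}X)\cong\Hom(\mathbf{q}A,\mathbf{q}X)=0$. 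It is not a left adjoint, as you claim in Step 2.
\end{itemize}

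Since each half of your argument still produces one adjoint of the inclusion, all six functors exist and the conclusion that a recollement exists is unaffected.
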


\begin{proof}
As discussed in \cite[Section 4.3]{krause}, there is a localization sequence 
\[
\begin{tikzcd}
   \mathbf{Ac}(\mathrm{Proj}(kG)) \arrow[r,twoheadleftarrow,shift left=0.4ex] \arrow[r,draw=none]& 
    \mathbf{K}(\mathrm{Proj}(kG)) \arrow[l,leftarrowtail,shift left=0.75ex]\arrow[r,leftarrowtail,shift left=0.4ex,"\mathbf{p}"]& 
    \mathbf{D}(kG) \arrow[l,twoheadleftarrow,shift left=0.75ex,"\mathbf{q}"]
\end{tikzcd}    
\]
where the left hand side denotes the full subcategory of $\mathbf{K}(\mathrm{Proj}(kG))$ on acyclic complexes, the functor  $\mathbf{q}$ inverts quasi-isomorphisms, and the functor $\mathbf{p}$ takes $K$-projective resolutions. This localization can be promoted to the claimed recollement. Indeed, since  $ \mathbf{K}(\mathrm{Proj}(kG))$ is compactly generated  (see  \cref{prop-kproj-compl}) and $\mathbf{q}$ commutes with coproducts, we can invoke Brown's representability to obtain the right hand side of the recollement.  The other half is an immediate consequence. 
Moreover,  we can identify $\mathbf{Ac}(\mathrm{Proj}(kG))$ with the stable module category $\mathbf{StMod}(kG)$; see \cref{rem-ac=stmod}. 
\end{proof}

\begin{Prop}\label{prop-res-motimesn}
     Let $k$ be as in \cref{hyp} and $G$ be a group of type $\Phi_k$. For any subgroup $H\subseteq G$ and objects $X, Y \in \mathbf{K}(\Proj(kG))$ we have that $\res^G_H(X\otimes Y)\simeq \res^G_H X\otimes \res^G_HY$. 
\end{Prop}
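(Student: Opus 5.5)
The plan is to deduce the statement from the point-set compatibility of restriction with $\otimes_k$ on chain complexes, and then check that nothing changes when we pass to $\mathbf{K}(\Proj(-))$. I do not expect a genuine obstacle: the only care needed is to make sure the tensor product on $\mathbf{K}(\Proj(kG))$ is the restriction of the one on $\mathbf{K}(\Mod(kG))$.

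First I record how the tensor product on $\mathbf{K}(\Proj(kG))$ arises. By \cref{def-kproj}, $\mathbf{K}(\Proj(kG))$ is a localizing tensor ideal of $\mathbf{K}(\Mod(kG))$. So for $X,Y\in\mathbf{K}(\Proj(kG))$, the object $X\otimes Y$ is the total complex of $X\otimes_k Y$, with $G$ acting diagonally. The reason it lies in the subcategory is the following. For a projective $kG$-module $P$ and any $kG$-module $N$, the module $P\otimes_k N$ is a summand of a sum of copies of $kG\otimes_k N$. The untwisting isomorphism gives $kG\otimes_k N\cong \ind_1^G\res^G_1 N$. When $N$ is $kG$-projective, $\res^G_1N$ is $k$-projective, so this module is $kG$-projective.

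Next I compare the two sides degreewise. In each degree $n$ we have
\[
\res^G_H\bigl((X\otimes Y)_n\bigr)=\res^G_H\Bigl(\bigoplus_{p+q=n}X_p\otimes_k Y_q\Bigr)=\bigoplus_{p+q=n}\res^G_H X_p\otimes_k \res^G_H Y_q .
\]
This uses two facts. Restriction is the identity on underlying $k$-modules and commutes with direct sums. The diagonal $G$-action restricts to the diagonal $H$-action. The differentials are given by the same Koszul-signed formula on both sides, so the identity map is an isomorphism of complexes of $kH$-modules. It is natural in $X$ and $Y$. This is precisely the strong symmetric monoidality of $\res^G_H$ on $\mathbf{K}(\Mod(-))$ noted in \cref{cons-ind-res-coind}.

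Finally I pass to the subcategories of projective complexes. By \cref{prop-ind-res-kproj-mod}, restriction sends $\mathbf{K}(\Proj(kG))$ into $\mathbf{K}(\Proj(kH))$. Both sides of the isomorphism above lie in $\mathbf{K}(\Proj(kH))$, by the first step applied to $H$. Hence the isomorphism of complexes gives the claimed equivalence $\res^G_H(X\otimes Y)\simeq\res^G_HX\otimes\res^G_HY$ in $\mathbf{K}(\Proj(kH))$.

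If the paper later endows $\mathbf{K}(\Proj(kG))$ with a different, unital tensor product (for instance one transported through the recollement of \cref{recollement for kG}), the argument would need an extra step. I would first identify that product with the naive $\otimes_k$ on $\mathbf{K}(\Proj(kG))$, and then apply the argument above.
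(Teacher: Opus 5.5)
Your proposal is correct and takes the same approach as the paper. The paper's proof simply notes that $\mathbf{K}(\Proj(kG))$ is closed under the tensor product of $\mathbf{K}(\Mod(kG))$, and that $\res^G_H$ is induced by the symmetric monoidal restriction functor on $\mathbf{K}(\Mod(-))$ from \cref{cons-ind-res-coind}; you fill in the details (projectivity of $P\otimes_k N$ via untwisting, and the degreewise identification). Your closing caveat is unnecessary: the unital structure in \cref{cor-kproj-ttcat} is the same $\otimes_k$, and it is obtained using this very proposition.
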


\begin{proof}
     This follows since $\mathbf{K}(\Proj(kG))$ is closed under the tensor product of the category $\mathbf{K}(\Mod(kG))$ and the functor $\res^G_H$ is induced by the symmetric monoidal  functor 
    \[
    \res^G_H\colon \mathbf{K}(\Mod(kG)) \to \mathbf{K}(\Mod(kH))
    \] 
    see \cref{cons-ind-res-coind}. 
\end{proof}

\subsection{A characterization of compact objects}

The goal of this subsection is to give a characterization of the compact objects in $\mathbf{K}(\mathrm{Proj}(kG))$. We need some preparation. 

\begin{Def}
    We let $\mathrm{Gp}(kG)$ denote the full subcategory of  $\Mod(kG)$ on modules which are isomorphic to a kernel in a totally acyclic complex of finitely generated $kG$--modules. 
\end{Def}

\begin{Rem}\label{rem-Gp-contained-GP}
    Note that $\mathrm{Gp}(kG)$ is an extension closed subcategory of $\mathrm{GP}(kG)$ by the horseshoe lemma. Moreover, it has enough projectives by definition; see \cref{rec-GP-Frobenius}. Since the category $\mathrm{GP}(kG)$ is Frobenius exact (see \cref{rec-GP-Frobenius}), we deduce that  $\mathrm{Gp}(kG)$ is a Frobenius exact  category where the projective-injective objects correspond to the finitely generated projective $kG$--modules. 
\end{Rem}

We also adopt some notation from \cite[Section 2]{BBIKP}. 

\begin{Not}
    Let $\mathcal{C}$ be a full additive subcategory of $\mathrm{GP}(kG)$, and let $\mathbf{K}(\mathcal{C})$ denote the homotopy category. We write $\mathbf{K}^+(\mathcal{C})$ to denote the full subcategory of $\mathbf{K}(\mathcal{C})$ on complexes $X$ such that $X^n=0$ for all $n\ll0$, and 
    \[
    \mathbf{K}^{+,b}(\mathcal{C})=\{X\in \mathbf{K}^{+}(\mathcal{C})\mid H^n(X)=0 \mbox{ for } |n|\gg0\}
    \]
    where $H^n(X)=0$ means that the differential $d^{n-1}_X$ factors as 
    \[
    X^{n-1} \twoheadrightarrow \mathrm{ker}(d^n_X)\rightarrowtail X^n
    \]
   the composite of admissible epimorphism and an admissible monomorphism in the exact category $\mathrm{GP}(kG)$.
\end{Not}

\begin{Not}
    We write $\mathrm{proj}(kG)$ to denote the full subcategory of $\Mod(kG)$ on finitely generated projective $kG$--modules.
\end{Not}

\begin{Prop}\label{prop-identification-Gproj}
     Let $k$ be as in \cref{hyp} and $G$ be a group. Then the inclusion functor $\mathrm{proj}(kG)\to \mathrm{Gp}(kG)$ induces an equivalence of triangulated categories
     \[
     \mathbf{K}^{+,b}(\mathrm{proj}(kG)) \to \mathbf{D}^b(\mathrm{Gp}(kG)).
     \]
\end{Prop}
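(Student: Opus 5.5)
This is the standard fact that, for a Frobenius exact category $\mathcal{E}$ with enough projectives $\mathcal{P}$, the functor $\mathbf{K}^{+,b}(\mathcal{P})\to \mathbf{D}^b(\mathcal{E})$ is an equivalence. It appears in this form in \cite[Section 2]{BBIKP}, and is essentially Keller's/Buchweitz's argument. I would apply it with $\mathcal{E}=\mathrm{Gp}(kG)$ and $\mathcal{P}=\mathrm{proj}(kG)$, which is legitimate by \cref{rem-Gp-contained-GP}. Here $\mathbf{K}^{+,b}(\mathrm{proj}(kG))$ is read with acyclicity taken in the exact category $\mathrm{Gp}(kG)$; equivalently, cohomology is eventually zero in the exact sense, with kernels in $\mathrm{Gp}(kG)$. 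The plan is to check that the composite $\mathbf{K}^{+,b}(\mathrm{proj})\hookrightarrow \mathbf{K}^{+,b}(\mathrm{Gp})\to \mathbf{D}^b(\mathrm{Gp})$ is well defined, fully faithful, and essentially surjective.

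First, well-definedness. The map $\mathbf{K}^{+,b}(\mathrm{Gp})\to \mathbf{D}^b(\mathrm{Gp})$ is defined using truncations. A complex in $\mathbf{K}^{+,b}$ is exact in high degrees, so it is quasi-isomorphic to its canonical truncation $\tau^{\le n}X$, which is a bounded complex. That truncation stays inside $\mathrm{Gp}(kG)$ because the relevant kernel is a kernel of an admissible epimorphism, and $\mathrm{Gp}(kG)$ is closed under such kernels by the Frobenius/extension-closed structure. More cleanly, I would identify $\mathbf{D}^b(\mathrm{Gp})$ with the Verdier quotient $\mathbf{K}^{+,b}(\mathrm{Gp})/\mathbf{K}^{+,\mathrm{ac}}(\mathrm{Gp})$, and note that left-bounded acyclic complexes are the kernel.

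Second, full faithfulness. The key lemma is that a left-bounded complex $P$ of projectives is left orthogonal in $\mathbf{K}(\mathrm{Gp})$ to every left-bounded acyclic complex $A$, meaning $\Hom_{\mathbf{K}}(P,A)=0$. The proof is the usual inductive construction of a null-homotopy, degree by degree from the bounded side. One uses that each $A$ splits into admissible short exact sequences $Z^n\rightarrowtail A^n\twoheadrightarrow Z^{n+1}$, and that $\Ext^1_{\mathrm{Gp}}(P^n,-)=0$ since $P^n$ is projective in the exact category. With the cohomological indexing used here, $X^n=0$ for $n\ll0$ and the homotopy starts at the lowest degree, so this works. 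The standard Verdier localization criterion then gives full faithfulness: every morphism $P\to X$ in the quotient is represented by a roof, and the left orthogonality lets one lift it uniquely to $\mathbf{K}$.

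Third, essential surjectivity. Given a bounded complex $X$ in $\mathrm{Gp}(kG)$, I construct a quasi-isomorphism $P\to X$ with $P\in\mathbf{K}^{+,b}(\mathrm{proj})$ such that the cone is acyclic. The Frobenius property is what makes this possible. Every object $M\in\mathrm{Gp}(kG)$ admits an admissible monomorphism $M\rightarrowtail Q$ into a finitely generated projective, which is part of the definition via totally acyclic complexes of f.g.\ projectives. It also has projective admissible epimorphisms. So I build a resolution in the "+" direction. Here I need to be careful with the cohomological convention. Since the target is $\mathbf{K}^{+}$, with $X^n=0$ for $n\ll0$, one produces $P$ by a Cartan–Eilenberg/horseshoe-type induction: starting from the lowest nonzero degree, replace each object by a coresolution-compatible projective. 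Alternatively, one reduces by induction on the length of $X$, via the triangles given by brutal truncations, to a single object $M$ in degree $0$. For such $M$, I take the right half of a complete resolution $M\rightarrowtail Q^0\to Q^1\to\cdots$. This complex is exact in positive degrees, hence in $\mathbf{K}^{+,b}(\mathrm{proj})$, and it is isomorphic to $M$ in $\mathbf{D}^b$. The essential image is a triangulated subcategory, since the functor is fully faithful and exact, and it contains all stalk objects, so it is everything.

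The main obstacle is bookkeeping. One must make sure the direction of the resolutions matches the "+" boundedness, which is why the Frobenius injectivity of $\mathrm{proj}(kG)$ inside $\mathrm{Gp}(kG)$ is essential. One must also check that the orthogonality argument (step two) runs from the correct side. Apart from this, the argument is formal and could simply be cited from \cite[Section 2]{BBIKP}.
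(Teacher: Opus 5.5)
Your three-step plan is the standard proof of what the paper simply cites, \cite[Corollary 4.2.9]{krause}: for an exact category with enough injectives, $\mathbf{K}^{+,b}$ of the injectives is equivalent to $\mathbf{D}^b$. The paper applies this to $\mathrm{Gp}(kG)$, where the Frobenius property makes $\mathrm{proj}(kG)$ exactly the injectives. However, your step two, which carries the content, is wrong as written. The lemma that a complex $P$ of projectives with $P^n=0$ for $n\ll 0$ satisfies $\Hom_{\mathbf{K}}(P,A)=0$ for all left-bounded acyclic $A$ is false. That orthogonality holds for \emph{right}-bounded complexes of projectives. Starting the null-homotopy at the low end, projectivity of the $P^n$ gives you nothing.

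A counterexample: let $G$ be finite and $k$ a field of characteristic $p$ dividing $|G|$. Choose a coresolution $k\rightarrowtail Q^0\to Q^1\to\cdots$ by finitely generated projectives. Put $P=(Q^0\to Q^1\to\cdots)$ in degrees $\geq 0$, and $A=(k\to Q^0\to Q^1\to\cdots)$ with $k$ in degree $-1$. Then $A$ is left-bounded and acyclic in $\mathrm{Gp}(kG)=\mathrm{mod}(kG)$, yet the inclusion $f\colon P\to A$ is not null-homotopic. Indeed, $A/P$ is the stalk complex $k$ in degree $-1$, and the degreewise split sequence $0\to P\to A\to A/P\to 0$ gives a triangle. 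If $f=0$ in $\mathbf{K}$, then $A/P\cong A\oplus P[1]$. But $\End_{\mathbf{K}}(A/P)=k$ is local, while neither summand vanishes. $A$ is not contractible, else $k\rightarrowtail Q^0$ splits and $k$ is projective. $P$ is not contractible either, since $H^0(P)=k$. This is not a side issue: the objects you need in $\mathbf{K}^{+,b}(\mathrm{proj}(kG))$ are precisely such complexes unbounded to the right, e.g.\ $\mathbf{i}M$. You flagged the direction of the orthogonality as something to check, but the version you wrote is the wrong one.

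The fix is to reverse the orthogonality and use injectivity instead of projectivity. Because $\mathrm{Gp}(kG)$ is Frobenius, each $P^n\in\mathrm{proj}(kG)$ is \emph{injective} in $\mathrm{Gp}(kG)$. Given any acyclic $A$, with conflations $Z^n\rightarrowtail A^n\twoheadrightarrow Z^{n+1}$, and a chain map $f\colon A\to P$ with $P$ left-bounded, build $h^{n+1}\colon A^{n+1}\to P^n$ inductively from the low end:
\begin{itemize}
\item $f^n-d_Ph^n$ vanishes on $Z^n$, so it factors through $A^n\twoheadrightarrow Z^{n+1}$;
\item extend the resulting map along $Z^{n+1}\rightarrowtail A^{n+1}$ using injectivity of $P^n$.
\end{itemize}
Hence $\Hom_{\mathbf{K}}(A,P)=0$. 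It follows that $\Hom_{\mathbf{K}}(X,P)\to\Hom_{\mathbf{D}^b}(X,P)$ is bijective for every $X$, since $P$ is local with respect to the acyclics, and this gives full faithfulness. Step three is then fine, except that the quasi-isomorphisms go $X\to P$ (e.g.\ $M\to(Q^0\to Q^1\to\cdots)$), not $P\to X$.
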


\begin{proof}
    This follows from the fact that $\mathrm{Gp}(kG)$ is a Frobenius exact category combined with \cite[Corollary 4.2.9]{krause}. 
\end{proof}

 We need to recall the following result.

\begin{Rec}\label{rec-compacts-in-loc}
    Let $\cat T$ be a compactly generated triangulated category. For a collection of compact objects $\mathcal{X}$, we have the following equality 
    \[
    \mathrm{loc}(\mathcal{X})\cap \cat T^c =\mathrm{thick}(\mathcal{X})
    \]
    see \cite[Lemma 2.2]{Neeman92}. 
\end{Rec}

\begin{Lem}\label{lemma-compacts}
     Let $k$ be as in \cref{hyp} and $G$ be a group of type $\Phi_k$.  Then $\mathbf{K}(\Proj(kG))^c$ identifies with $\mathbf{K}^{+,b}(\mathrm{proj}(kG))$. Moreover, we have 
     \[
     \mathbf{K}(\Proj(kG))^c\simeq \mathbf{D}^{b}(\mathrm{Gp}(kG)).
     \]
\end{Lem}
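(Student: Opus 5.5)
The plan is to squeeze $\mathbf{K}(\Proj(kG))^c$ between two inclusions. I will use \cref{rec-compacts-in-loc} together with the explicit compact generators from the proof of \cref{prop-compactgen}. Those generators have the form $\ind_H^G(x)$, where $H\subseteq G$ is finite and $x$ runs through compact generators of $\mathbf{K}(\Proj(kH))$. For finite $H$ and $k$ regular, the compact objects of $\mathbf{K}(\Proj(kH))$ are the objects of $\mathbf{K}^{+,b}(\mathrm{proj}(kH))$; this is the finite-group case in \cite{BBIKP}. So I take $x$ ranging over $\mathbf{K}^{+,b}(\mathrm{proj}(kH))$. The second statement of the lemma will then follow at once from the first and \cref{prop-identification-Gproj}.

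\emph{Step 1: the generators lie in $\mathbf{K}^{+,b}(\mathrm{proj}(kG))$.} The functor $\ind_H^G=kG\otimes_{kH}-$ is exact and sends finitely generated projectives to finitely generated projectives. The proof of \cref{prop-induction-stmod} shows that it sends totally acyclic complexes to totally acyclic complexes, and the same argument works with finitely generated terms. Hence it maps $\mathrm{Gp}(kH)$ into $\mathrm{Gp}(kG)$ and preserves the admissible factorizations of differentials that define $\mathbf{K}^{+,b}$. Therefore $\ind_H^G$ maps $\mathbf{K}^{+,b}(\mathrm{proj}(kH))$ into $\mathbf{K}^{+,b}(\mathrm{proj}(kG))$.

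\emph{Step 2: $\mathbf{K}^{+,b}(\mathrm{proj}(kG))$ is a thick subcategory of $\mathbf{K}(\Proj(kG))$.} Closure under shifts and cones is the standard fact that $\mathbf{K}^{+,b}$ is triangulated, as in \cite[Section 2]{BBIKP}. For closure under retracts, I use the equivalence with $\mathbf{D}^b(\mathrm{Gp}(kG))$ from \cref{prop-identification-Gproj}. Since $\mathrm{Gp}(kG)$ is a Frobenius category, it is idempotent complete: a summand of a kernel in a totally acyclic complex of finitely generated projectives is again such a kernel, by an Eilenberg swindle-free direct argument. Balmer--Schlichting then shows that $\mathbf{D}^b(\mathrm{Gp}(kG))$ is idempotent complete. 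Granting this, \cref{rec-compacts-in-loc} gives
\[
\mathbf{K}(\Proj(kG))^c=\thick(\text{generators})\subseteq \mathbf{K}^{+,b}(\mathrm{proj}(kG)).
\]

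\emph{Step 3: every object of $\mathbf{K}^{+,b}(\mathrm{proj}(kG))$ is compact.} This is the converse inclusion, and I expect it to be the main obstacle. Given $X$, I take a brutal truncation far out. This writes $X$ as an extension of a bounded complex of finitely generated projectives by a tail, where the tail is the right half of a totally acyclic complex of finitely generated projectives resolving some $M\in\mathrm{Gp}(kG)$. The bounded piece is compact, because $kG=\ind_1^Gk$ is compact and compact objects are closed under finite extensions.

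For the tail, I first try a direct Hom computation. The term-wise functors $\Hom(X^n,-)$ commute with coproducts, and the difficulty is the infinite product over $n$ in the Hom complex. Take $Y\in\mathbf{K}(\Proj(kG))$. I aim to show that total acyclicity of the tail, together with \cref{rem-totally-acy} (acyclic complexes of projectives are totally acyclic for groups of type $\Phi_k$), makes maps from the tail into $Y$ depend only on finitely many degrees. This should identify those maps with something computed in finitely many degrees, which commutes with coproducts.

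If this fails, my fallback is to reduce to finite subgroups. In that case I would show that $\Hom(X,-)$ commutes with coproducts after restricting along $\res^G_H$ for every finite $H$, using the finite-group case there. I would then pass back to $G$ using the conservativity statement \cref{prop-kproj-conservativity} together with the adjunction of \cref{prop-ind-res-kproj-mod}.
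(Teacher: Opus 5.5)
Your Step 1, your use of \cref{rec-compacts-in-loc} and of the finite-group case from BBIKP, and your deduction of the second claim from \cref{prop-identification-Gproj} all match the paper. Your brutal truncation is an equivalent form of the paper's thick-subcategory reduction to objects $\mathbf{i}M$ with $M\in\mathrm{Gp}(kG)$.

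Step 2 makes explicit a thickness that the paper uses tacitly. The conclusion is right, but ``Frobenius, hence idempotent complete'' is not a valid inference. Instead, note that a summand of $M\in\mathrm{Gp}(kG)$ is Gorenstein projective and of type $FP_\infty$, so it lies in $\mathrm{Gp}(kG)$ by the result of Benson used in \cref{prop-gp-equivalent-mod}.

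The genuine gap is Step 3. It carries the whole content of the lemma, and you only state it as an aim. The missing idea is that projective $kG$-modules are the injective objects of the Frobenius category $\mathrm{GP}(kG)$, since $\Ext^1_{kG}(N,Q)=0$ for $N$ Gorenstein projective and $Q$ projective. Hence the cone of $M\to\mathbf{i}M$ is a bounded-below complex that is acyclic in $\mathrm{GP}(kG)$. Extending maps inductively along its admissible monomorphisms shows that every chain map from it (or its shifts) to an arbitrary complex of projectives $Y$ is null-homotopic. This gives
\[
\Hom_{\mathbf{K}(\Proj(kG))}(\mathbf{i}M,Y)\cong\Hom_{\mathbf{K}(\mathrm{GP}(kG))}(M,Y)=H^0\Hom_{kG}(M,Y),
\]
which is \cite[Lemma 4.2.6]{krause} as invoked in the paper. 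Only at this point does finiteness enter: $M$ is finitely presented, so $\Hom_{kG}(M,-)$ commutes with direct sums degreewise. Hence so does $H^0\Hom_{kG}(M,-)$, and $\mathbf{i}M$ is compact.

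Your heuristic that maps ``depend only on finitely many degrees'' points in this direction. However, you never identify the Hom group, nor say where finite generation of $M$ is used, and that is exactly what separates compact from non-compact objects here. Also, \cref{rem-totally-acy} is not the relevant input: the tail is part of a totally acyclic complex simply because $M\in\mathrm{Gp}(kG)$.

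Your fallback does not work, for three reasons.
\begin{itemize}
\item For a finite subgroup $H$ of infinite index, $\res^G_H$ does not preserve compact objects. For infinite $G$, $\res^G_1(kG)$ is a free $k$-module of infinite rank, which is not compact in $\mathbf{K}(\Proj(k))$. So there is no finite-group statement to apply to $\res^G_H X$.
\item Joint conservativity detects isomorphisms of objects, not whether the map of abelian groups $\bigoplus_i\Hom(X,Y_i)\to\Hom(X,\bigoplus_iY_i)$ is bijective. The induction--restriction adjunction only computes maps out of induced objects, and showing that $X$ is built from those is precisely the inclusion you are trying to prove.
\item Compactness is genuinely not detected by finite subgroups. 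Take $G=\mathbb{Q}$, which is torsion-free of finite cohomological dimension and hence of type $\Phi_k$. Then $\mathbf{K}(\Proj(k\mathbb{Q}))\simeq\mathbf{D}(k\mathbb{Q})$, and the projective resolution of the trivial module $k$ restricts to a compact object on the only finite subgroup. Yet it is not compact, because $\mathbb{Q}$ is not finitely generated.
\end{itemize}
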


\begin{proof}
   Note that the second claim is a direct consequence of the first claim by \cref{prop-identification-Gproj}. Now,  in \cref{prop-kproj-compl}, we described a set of compact generators of $\mathbf{K}(\Proj(kG))$, and in view of \cref{rec-compacts-in-loc} it follows that 
    \[
    \mathbf{K}(\Proj(kG))^c=\mathrm{thick}(\ind_H^G X\mid X\in \mathbf{K}(\Proj(kH))^c, \, H\subseteq  G \mbox{ finite} ). 
    \]
    On the other hand, for a finite group $H$, we have 
    \[
    \mathbf{K}(\Proj(kH))^c=\mathbf{K}^{+,b}(\mathrm{proj}(kH))
    \]
    as proved in \cite[Lemma 2.13]{BBIKP}. Since $\ind_H^G$ is exact and sends finitely generated projectives to finitely generated projectives, we obtain 
     \[
    \mathbf{K}(\Proj(kG))^c\subseteq  \mathbf{K}^{+,b}(\mathrm{proj}(kG)). 
    \]
    For the other inclusion, we use the equivalence from \cref{prop-identification-Gproj}. By a thick subcategory argument, we can assume that $X\in \mathbf{K}^{+,b}(\mathrm{proj}(kG))$ is of the form $\mathbf{i}M$ for some $M\in \mathrm{Gp}(kG)$. Here $\mathbf{i}M$ denotes an injective resolution of $M$ in the Frobenius exact category $\mathrm{Gp}(kG)$ which is the inverse of the equivalence $ \mathbf{K}^{+,b}(\mathrm{proj}(kG)) \to \mathbf{D}^b(\mathrm{Gp}(kG))$ from \cref{prop-identification-Gproj}. In this case, we have that for any $Y\in \mathbf{K}(\mathrm{Proj}(kG))$, the map $M\to \mathbf{i}M$ induces a bijection
    \[
    \Hom_{\mathbf{K}(\mathrm{Proj}(kG))}(\mathbf{i}M, Y)\simeq \Hom_{\mathbf{K}(\mathrm{GP}(kG))}(M, Y)
    \]
    see \cite[Lemma 4.2.6]{krause}. Since $M$ lies in $\mathrm{Gp}(kG)$, it is in particular finitely presented, and hence  $\Hom_{\mathbf{K}(\mathrm{GP}(kG))}(M, -)$ preserves filtered colimits. It follows that $\mathbf{i}M$ is compact, and this completes the proof. 
\end{proof}

\begin{Rem}
    Let $\mathrm{GP}^{\mathrm{fg}}(kG)$ denote the full subcategory of $\mathrm{GP}(kG)$ on finitely generated modules. If follows that $\mathrm{Gp}(kG)$ is a subcategory of $\mathrm{GP}^{\mathrm{fg}}(kG)$. Moreover, these two categories agree when the group algebra is coherent; see \cite[Proposition 1.4]{Zhang2008ABI}. This is the case for instance when $G$ acts on a tree with finite isotropy groups; see \cite[Proposition 3.3]{CEKT14}.
\end{Rem}

We have a further characterization of the category $\mathbf{D}^b(\mathrm{Gp}(kG))$. For this, let us consider the category $\mod_\infty(kG)$ of  $kG$--modules of type $FP_\infty$; that is, $kG$--modules that admit a resolution by finitely generated projective $kG$--modules. By the horseshoe lemma this category is an extension-closed subcategory of $\Mod(kG)$, hence it inherits an exact structure and has enough projectives.  Moreover, this category satisfies the 2-out-of-3 property. That is, for any short exact sequence of $kG$--modules, if any two out of the three modules are of type $FP_\infty$, then so is the third. See \cite[Proposition 1.4]{Bie81}.

\begin{Prop}
\label{prop-gp-equivalent-mod}
  Let $k$ be as in \cref{hyp} and $G$ be a group of type $\Phi_k$.    Then the inclusion $\mathrm{Gp}(kG)\subseteq \mod_\infty(kG)$ induces an equivalence of triangulated categories 
    \[
    \mathbf{D}^b(\mathrm{Gp}(kG))\simeq \mathbf{D}^b(\mathrm{mod}_\infty(kG)).
    \]
\end{Prop}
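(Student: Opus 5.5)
Both bounded derived categories receive a functor from $\mathbf{K}^{+,b}(\mathrm{proj}(kG))$, and the plan is to show that both of these functors are equivalences. For $\mathrm{Gp}(kG)$ this is already \cref{prop-identification-Gproj}. For $\mod_\infty(kG)$ I will use the exact category $\mod_\infty(kG)$, which has enough projectives, and these projectives are exactly the finitely generated projective $kG$-modules. One then runs the standard argument (\cite[Corollary 4.2.9]{krause} or its projective dual). Every bounded complex in $\mod_\infty(kG)$ has a resolution by a complex in $\mathbf{K}^{-}(\mathrm{proj}(kG))$ that is bounded in cohomology. This gives an equivalence $\mathbf{K}^{-,b}(\mathrm{proj}(kG))\simeq \mathbf{D}^b(\mathrm{mod}_\infty(kG))$.

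The inclusion $\mathrm{Gp}(kG)\subseteq \mathrm{mod}_\infty(kG)$ holds because a kernel in a totally acyclic complex of finitely generated projectives has a resolution by the left half of that complex. It induces an exact functor $\mathbf{D}^b(\mathrm{Gp}(kG))\to \mathbf{D}^b(\mathrm{mod}_\infty(kG))$. I will prove it is fully faithful and essentially surjective, working with the generators coming from modules in degree $0$.

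Full faithfulness. It suffices to compare $\Ext^n$ between objects of $\mathrm{Gp}(kG)$. Both exact categories have the same projectives, the finitely generated projectives. Moreover, any $M\in\mathrm{Gp}(kG)$ has a projective resolution in $\mathrm{Gp}(kG)$, namely the left half of its complete resolution, and all syzygies lie in $\mathrm{Gp}(kG)$. So both Ext groups are computed by $\Hom_{kG}(P_\bullet,N)$ and agree. A standard d\'evissage then gives full faithfulness on all of $\mathbf{D}^b$.

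Essential surjectivity is the main obstacle. Since $\mathbf{D}^b(\mathrm{mod}_\infty(kG))$ is generated as a thick subcategory by modules in degree $0$, I need each $M\in\mathrm{mod}_\infty(kG)$ to lie in the thick closure of $\mathrm{Gp}(kG)$. The key step is the following: for $n$ at least the finitistic/Gorenstein dimension, a high syzygy $\Omega^n M$ taken from a finitely generated projective resolution lies in $\mathrm{Gp}(kG)$. Given this, the truncated resolution $0\to\Omega^nM\to P_{n-1}\to\dots\to P_0\to M\to0$ shows that $M$ lies in the thick closure, and we are done.

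To prove the key step I would use the type $\Phi_k$ hypothesis as follows. For each finite $H$, $\res^G_H M$ has finite Gorenstein dimension, because $kH$ is Gorenstein of finite dimension when $k$ has finite global dimension. By \cite{mazza2019stable}, the Gorenstein projective dimension of $M$ over $kG$ is bounded by a uniform $n$; here one uses that groups of type $\Phi_k$ have finite finitistic dimension, together with \cref{rem-totally-acy}. So $\Omega^nM$ is Gorenstein projective and finitely generated with an $FP_\infty$ resolution.

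It remains to produce a totally acyclic complex of \emph{finitely generated} projectives through $\Omega^nM$. The left half is the given resolution. For the right half I would dualize: $(\Omega^nM)^*=\Hom_{kG}(\Omega^nM,kG)$ is again of type $FP_\infty$ and Gorenstein projective, so it has a finitely generated projective resolution. Dualizing that resolution back gives the coresolution, using that $\Omega^nM$ is reflexive because $\Ext^{>0}(\Omega^nM,\text{proj})=0$. Finally, \cref{rem-totally-acy} shows that the spliced complex is totally acyclic. I expect this dualization and reflexivity argument for non-Noetherian $kG$ to be the hardest point.
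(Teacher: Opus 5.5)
Your overall structure is sound. The paper instead applies Keller's criterion for a finitely preresolving subcategory \cite[Theorem 12.1]{Kel96}. You replace its ``closure under kernels'' condition with a direct comparison of $\Ext$ groups, which works: both exact categories have the finitely generated projectives as their projectives, and the left half of a complete resolution is a resolution in both. Your essential surjectivity step is exactly the paper's third condition.

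The gap is the key step. You must show that the Gorenstein projective $FP_\infty$ module $N=\Omega^nM$ lies in $\mathrm{Gp}(kG)$, i.e.\ is a kernel in a totally acyclic complex of \emph{finitely generated} projectives. Your dualization argument does not establish this:
\begin{itemize}
\item Dualizing the finitely generated projective resolution of $N$ gives a \emph{co}resolution $0\to N^*\to Q_0^*\to Q_1^*\to\cdots$, not a resolution of $N^*$.
\item Dualizing the non-finitely generated totally acyclic complex through $N$ produces terms like $\prod kG$, which are not projective.
\item So nothing shows that $N^*$ is of type $FP_\infty$ or Gorenstein projective.
\item The vanishing of $\Ext^{>0}(N,\mathrm{Proj})$ is also not what yields reflexivity of $N$.
\end{itemize}
Over a non-coherent ring such as $kG$, all three claims would follow from a finite-type coresolution of $N$ with exact dual, which is precisely what you are trying to build. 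The argument is therefore circular at its crucial point. The paper closes this step by citing \cite[Corollary 5.3]{Ben97}, which gives complete resolutions of finite type for modules of type $FP_\infty$, together with \cref{rem-bensoncof}.

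For a self-contained repair, avoid duals altogether.
\begin{enumerate}
\item Since $N$ is Gorenstein projective, there is $0\to N\to P\to N'\to 0$ with $P$ projective and $N'$ Gorenstein projective. The map $N\to P$ is a left $\mathrm{Proj}$-approximation because $\Ext^1(N',Q)=0$ for every projective $Q$.
\item Write $P$ as a summand of a free module. As $N$ is finitely generated, its image lies in a finitely generated free summand $F$, and $N\to F$ is still a left $\mathrm{Proj}$-approximation.
\item Hence $C=F/N$ is finitely generated with $\Ext^{\geq 1}(C,\mathrm{Proj})=0$. Comparing the two approximations via a pushout gives $C\oplus P\cong N'\oplus F$. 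So $C$ is Gorenstein projective, since that class is closed under direct summands.
\item Iterating gives $0\to N\to F^0\to F^1\to\cdots$ with finitely generated free terms and Gorenstein projective cokernels.
\item Splicing with the finitely generated projective resolution of $N$ yields an acyclic complex of finitely generated projectives, which is totally acyclic by \cref{rem-totally-acy}.
\end{enumerate}
With this in place the rest of your proof goes through. One small correction: the essential image of a fully faithful exact functor need not be closed under summands. You only need $M$ in the triangulated closure of $\mathrm{Gp}(kG)$, not the thick closure, and your truncated resolution gives that directly.
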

\begin{proof}
     In view of \cite[Theorem 12.1]{Kel96}  (see also \cite[Theorem 1.1(2)]{henrard2020preresolving}), we only need to verify that $\mathrm{Gp}(kG)$ is a finitely preresolving subcategory of $\mathrm{mod}_\infty(kG)$, which amounts to prove: 
     \begin{itemize}
         \item $\mathrm{Gp}(kG)$ is closed under kernels of admissible epimorphisms in $\mod_\infty(kG)$. Let $0\to L\to M \to N\to 0$ a short exact sequence of $kG$--modules with $M$ and $N$ in $\mathrm{Gp}(kG)$. Since both $\mathrm{GP}(kG)$ and $\mod_\infty(kG)$ satisfy the 2-out-of-3 property, we conclude that $L$ must be a Gorenstein  projective $kG$--module of type $FP_\infty$. From \cite[Corollary 5.3]{Ben97}, we deduce that $L$ has a complete resolution by finitely generated projective $kG$--modules, showing that $L \in \mathrm{Gp}(kG)$. Here we are using that the class of Gorenstein projective modules agrees with the class of Benson's cofibrant modules. See \cref{rem-bensoncof}. 
         \item For any $M$ in $\mod_\infty(kG)$, there is an admissible epimorphism $N\to M$ with $N\in\mathrm{Gp}(kG)$. This follows since finitely generated projective modules  lie in $\mathrm{Gp}(kG)$. 
         \item Any $M$ in  $\mod_\infty(kG)$ has a finite resolution by modules in $\mathrm{Gp}(kG)$. This is a consequence of \cite[Corollary 5.3]{Ben97} using again \cref{rem-bensoncof}. Indeed, the result cited gives us that a high enough syzygy $\Omega^n M$ of $P^\bullet$, a resolution of $M$ by finitely generated projective $kG$--modules, must have a complete resolution in the sense of \cite[Definition 5.1]{Ben97}, equivalently, such syzygy must be Benson's cofibrant, and so a Gorenstein projective module.  It follows that  $0\to \Omega^n M \to P^n\to P^{n+1}\to P^0\to M\to 0$ is a finite resolution of $M$ by modules in $\mathrm{Gp}(kG)$.
     \end{itemize}
     This completes the proof. 
\end{proof}

Let $\mathrm{fp}(kG)$ denote the category of finitely presented $kG$--modules. 

\begin{Cor}\label{cor-compact-Kproj}
     Let $k$ be as in \cref{hyp} and $G$ be a group of type $\Phi_k$.  Then we have an equivalence of triangulated categories  
     \[
     \mathbf{K}(\Proj(kG))^c\simeq \mathbf{D}^{b}(\mathrm{mod}_\infty(kG)).
     \]
     Moreover, if $kG$ is coherent (for instance, if $G$ has a 1-dimensional model for $\underline{E}G$), then there is an equivalence of triangulated categories 
     \[
     \mathbf{K}(\Proj(kG))^c\simeq \mathbf{D}^{b}(\mathrm{fp}(kG)).
     \]
\end{Cor}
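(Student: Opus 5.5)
The first equivalence follows by composing two results already established. By \cref{lemma-compacts}, for $k$ as in \cref{hyp} and $G$ of type $\Phi_k$, we have $\mathbf{K}(\Proj(kG))^c\simeq \mathbf{D}^b(\mathrm{Gp}(kG))$. By \cref{prop-gp-equivalent-mod}, the inclusion $\mathrm{Gp}(kG)\subseteq \mod_\infty(kG)$ induces $\mathbf{D}^b(\mathrm{Gp}(kG))\simeq \mathbf{D}^b(\mod_\infty(kG))$. Both functors are triangulated, so their composite is the desired equivalence $\mathbf{K}(\Proj(kG))^c\simeq \mathbf{D}^b(\mod_\infty(kG))$.

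For the second statement, I would show that $\mod_\infty(kG)=\mathrm{fp}(kG)$ as exact subcategories of $\Mod(kG)$ whenever $kG$ is coherent. The inclusion $\mod_\infty(kG)\subseteq \mathrm{fp}(kG)$ always holds: a resolution by finitely generated projectives has a finite presentation as its last two stages. Conversely, let $M$ be finitely presented and pick a presentation $P_1\to P_0\to M\to 0$ with $P_i$ finitely generated projective. Over a coherent ring, finitely presented modules form an abelian subcategory closed under kernels, so $\ker(P_0\to M)$ is finitely presented. It is therefore covered by a finitely generated free module, whose kernel is again finitely presented. Iterating this gives a resolution of $M$ by finitely generated projectives, so $M$ is of type $FP_\infty$. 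The two categories coincide as full subcategories, both carry the exact structure inherited from $\Mod(kG)$, and hence their bounded derived categories agree. Combining this with the first part gives the second equivalence.

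Finally, I would justify the parenthetical claim. If $G$ has a $1$-dimensional model for $\underline{E}G$, then $G$ acts on a tree with finite isotropy groups, and by \cite[Proposition 3.3]{CEKT14} the group algebra $kG$ is coherent (as recalled in the remark after \cref{lemma-compacts}). Such a $G$ is also of type $\Phi_k$ by \cref{ex-fin-dim-model-Phi}, so the hypotheses of the first part hold.

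The only step that needs care is the coherence argument. One must use left coherence of $kG$ for left modules, which is what the cited result provides. One must also check that the exact structures match, which is immediate since both are inherited from $\Mod(kG)$.
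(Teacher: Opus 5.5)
Your proof is correct and follows the paper's argument: you compose \cref{lemma-compacts} with \cref{prop-gp-equivalent-mod}, and in the coherent case you identify $\mathrm{mod}_\infty(kG)$ with $\mathrm{fp}(kG)$. You also spell out what the paper leaves implicit, namely the standard coherence argument and the justification of the parenthetical example via trees with finite isotropy, \cite[Proposition 3.3]{CEKT14}, and \cref{ex-fin-dim-model-Phi}.
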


\begin{proof}
    This is consequence of \cref{lemma-compacts} together with \cref{prop-gp-equivalent-mod}. The last claim follows since in this case $\mod(kG)$ agrees with $\mathrm{fp}(kG)$.  
\end{proof}

\section{Derived categories of permutation modules}\label{sec-perm}

In this subsection, let $G$ be a group admitting a finite-dimensional model for the classifying space for proper actions, and let $k$ be a commutative Noetherian ring. Our goal is to introduce a \textit{derived category of permutation $kG$–modules}, in analogy with the construction of Balmer and Gallauer in \cite{balmer2023finite}. This category was also considered in \cite{kendall2024co} in the case where $k$ is a field of positive characteristic.

\begin{Not}
    Let $\mathrm{perm}(G,k)$ denote the additive category of permutation $kG$--modules; in symbols: 
    \[
    \mathrm{perm}(G,k)\coloneqq \mathrm{add}\langle k(G/H)\mid H\subseteq G\rangle.
    \]
     We write $\mathrm{perm}^{\fin}(G,k)$ to denote the additive closure, in $\mathrm{perm}(G,k)$, of the collection of \textit{finite permutation modules}, that is,  modules of the form $k(G/H)$ for a finite subgroup $H\subseteq G$.  
\end{Not}

\begin{Rem}
   In general, the category $\mathrm{perm}(G,k)$ is not idempotent complete, even for finite groups; see \cite{BG23}.
\end{Rem}

\begin{Conv}
   We refer to the modules in the idempotent completion of $\mathrm{perm}(G,k)$ as $\natural$-permutation modules, following \cite{balmer2023finite}. Similarly, we use the term \textit{finite $\natural$-permutation modules} for objects of $\mathrm{perm}^\mathrm{fin}(G,k)^\natural$.
\end{Conv}

\begin{Def}
     The \textit{derived category of finite permutation $kG$--modules} $$\DPerm^{\mathrm{fin}}(G,k)$$ is defined as the localizing closure, in the homotopy category $\mathbf{K}(\Mod(kG))$, of $\mathrm{perm}^{\fin}(G,k)$; in symbols, 
     \[
     \DPerm^{\fin}(G,k)\coloneqq \Loc_{\mathbf{K}(\Mod(kG))}(\mathrm{perm}^{\fin}(G,k)).
     \]
\end{Def}

\begin{Rem}
    If $G$ is a finite group, then $\DPerm^{\fin}(G,k)$ agrees with the Balmer-Gallauer's derived category of permutation modules; see \cite[Proposition 3.9]{BG23}. 
\end{Rem}

\begin{Rem}
    In general, the categories $\DPerm^{\fin}(G,k)$ and $\mathbf{K}(\mathrm{Perm}^{\fin}(G,k))$ do not agree, where $\mathrm{Perm}^{\fin}(G,k)$ denotes the additive subcategory of the category of all permutation modules generated by the finite permutation modules;  see \cite[Remark 3.5]{BG23}. 
\end{Rem}

\begin{Lem}\label{rem-dperm is comp}
    The triangulated category $\DPerm^{\fin}(G,k)$ is compactly generated by the permutation modules $\mathrm{perm}^{\fin}(G,k)$. In fact, we have 
    \[
    \DPerm^{\fin}(G,k)^c=\mathbf{K}^b(\mathrm{perm}^{\fin}(G,k))^\natural=\mathbf{K}^b(\mathrm{perm}^{\fin}(G,k)^\natural).
    \]
\end{Lem}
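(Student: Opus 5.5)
The key observation is that every finite permutation module $k(G/H)$, viewed as a complex concentrated in degree zero, is a compact object of $\mathbf{K}(\Mod(kG))$. Once this is known, the statement follows from Neeman's theory of localizing subcategories generated by compacts. First I note that $\mathbf{K}(\Mod(kG))$ admits arbitrary coproducts, computed degreewise. By definition, $\DPerm^{\fin}(G,k)$ is the localizing subcategory generated by the set of objects $k(G/H)$ with $H\subseteq G$ finite; there is a set of such generators up to isomorphism, namely one per conjugacy class of finite subgroups.

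For compactness, take a family $\{Y_i\}$ in $\mathbf{K}(\Mod(kG))$ and compute
\[
\Hom_{\mathbf{K}(\Mod(kG))}(k(G/H), Y)\cong H^0\bigl(\Hom_{kG}(k(G/H),Y^\bullet)\bigr)\cong H^0\bigl((Y^\bullet)^H\bigr).
\]
Here $\Hom$ out of a module placed in degree $0$ into a complex is computed, up to homotopy, by the degree-zero cohomology of the Hom complex. The functor of $H$-fixed points commutes with arbitrary direct sums of $kG$-modules, because $k(G/H)$ is finitely generated; taking cohomology also commutes with direct sums. Hence $\Hom(k(G/H),\bigoplus_i Y_i)\cong\bigoplus_i\Hom(k(G/H),Y_i)$, so $k(G/H)$ is compact in $\mathbf{K}(\Mod(kG))$. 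Note that finiteness of $H$ is not even needed here.

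A localizing subcategory generated by a set of compact objects of the ambient category is compactly generated by that set. This is Neeman's result: objects compact in the ambient category remain compact in the localizing subcategory, since coproducts there are computed in the ambient category. The compact objects of $\Loc(\mathcal X)$ are then $\thick(\mathcal X)$, which is the content of \cref{rec-compacts-in-loc} applied inside $\cat T=\DPerm^{\fin}(G,k)$. It remains to identify $\thick(\mathrm{perm}^{\fin}(G,k))$ with $\mathbf{K}^b(\mathrm{perm}^{\fin}(G,k))^\natural$.

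The thick closure of a set of objects of an additive category, taken inside the homotopy category, consists of retracts of iterated cones. Every bounded complex of finite permutation modules is built from its terms by finitely many cones, using brutal truncations. Conversely, cones of maps between bounded complexes of permutation modules are again such complexes. So the triangulated closure is exactly $\mathbf{K}^b(\mathrm{perm}^{\fin}(G,k))$, and the thick closure is its idempotent completion, realized by retracts inside the idempotent complete category $\mathbf{K}(\Mod(kG))$. The last equality $\mathbf{K}^b(\mathcal A)^\natural\simeq\mathbf{K}^b(\mathcal A^\natural)$ for an additive category $\mathcal A$ is a standard result of Balmer and Schlichting. In this setting both sides are realized as full subcategories of $\mathbf{K}(\Mod(kG))$, and idempotents of $\mathrm{perm}^{\fin}(G,k)$ split in $\Mod(kG)$.

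The main subtlety I expect is this last identification inside $\mathbf{K}(\Mod(kG))$. One must check that a retract in $\mathbf{K}(\Mod(kG))$ of a bounded complex of permutation modules is homotopy equivalent to a bounded complex of $\natural$-permutation modules. I would handle this by citing Balmer–Schlichting: the idempotent completion $\mathbf{K}^b(\mathcal A)^\natural$ is equivalent to $\mathbf{K}^b(\mathcal A^\natural)$ modulo a $K_0$-issue that disappears once $\mathcal A^\natural$ is used. This gives the claim, and the compactness computation is routine.
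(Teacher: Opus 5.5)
Your argument is correct, but you obtain compactness of the generators by a different route from the paper.

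\textbf{Compactness.} The paper reduces to finite groups. It writes $k(G/H)\simeq\ind_H^G(k)$ and notes that induction is left adjoint to the coproduct-preserving restriction, so it preserves compact objects. It then recalls that the unit $k$ is compact in $\DPerm^{\fin}(H,k)$ for finite $H$. You instead compute directly that $\Hom_{\mathbf{K}(\Mod(kG))}(k(G/H),-)\cong H^0\bigl((-)^H\bigr)$, which commutes with coproducts because $k(G/H)$ is a cyclic $kG$-module.

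Your route is more elementary and self-contained. It needs no finite-group input and gives compactness already in the ambient category $\mathbf{K}(\Mod(kG))$. It also works for every subgroup $H$, so it shows as well that the larger category generated by all permutation modules is compactly generated. The paper's route fits its overall strategy of reducing statements to finite subgroups through induction and restriction.

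\textbf{Identifying the compacts.} Both arguments use \cref{rec-compacts-in-loc} followed by a thick-subcategory argument. You are more explicit about the last equality, which the paper dismisses by saying retracts of complexes are degreewise.

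One small correction to your wording there: no $K_0$ caveat enters the equivalence $\mathbf{K}^b(\mathcal A)^\natural\simeq\mathbf{K}^b(\mathcal A^\natural)$. You need two facts. First, $\mathbf{K}^b(\mathcal A^\natural)$ is idempotent complete (Balmer--Schlichting). Second, each of its objects is a summand of an object of $\mathbf{K}^b(\mathcal A)$. The second holds because the summand-closure of a triangulated subcategory is thick and contains every $P[n]$ with $P\in\mathcal A^\natural$. The $K_0$ obstruction only decides whether $\mathbf{K}^b(\mathcal A)$ is itself already idempotent complete.
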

\begin{proof}
     By definition of the derived category of permutation modules, the set $\mathrm{perm}^{\fin}(G,k)$ generates so we just need to verify that the objects are compact.  Now recall from \cref{cons-ind-res-coind} that the restriction functor $\mathbf{K}(\Mod(kG))\to \mathbf{K}(\Mod(kH))$ admits a left and right adjoint. In particular, the left adjoint (i.e., induction) preserves compact objects. Finally, recall that $k\in \DPerm^{\fin}(H,k)$ is compact for any finite group $H$. Combining these facts, we see that 
    \[
    k(G/H)\simeq \mathrm{ind}_H^G(k) \in \DPerm^{\fin}(G,k)^c
    \]
    for any finite subgroup $H\subseteq G$. Using \cref{rec-compacts-in-loc} we deduce that compact objects agrees with the thick subcategory generated by the finite permutation modules, which in turn can be identify with $\mathbf{K}^b(\mathrm{perm}^{\fin}(G,k))^\natural$ via a thick subcategory argument. The last equality of the lemma follows by noting that retracts of complexes are degreewise.  
\end{proof}

\begin{Prop}\label{prop-ind-res-permutation-mod}
    Let $H\subseteq G$ be a finite subgroup. The induction-restriction adjunction from \cref{cons-ind-res-coind}
    descends to an adjunction 
    \[
    \ind^G_H \colon \DPerm^{\fin}(H,k) \leftrightarrows  \DPerm^{\fin}(G,k)\colon \res_H^G. 
    \]
     Moreover, the restriction functor $\res^G_H$ has also a right adjoint.
\end{Prop}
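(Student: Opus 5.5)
We need to show that induction and restriction on $\mathbf{K}(\Mod(kG))$ and $\mathbf{K}(\Mod(kH))$ preserve the relevant localizing subcategories, and then produce a right adjoint to restriction.

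\textbf{Induction.} Since $\ind_H^G$ is a left adjoint on homotopy categories, it preserves coproducts and is exact. Hence $\ind_H^G{}^{-1}(\DPerm^{\fin}(G,k))$ is a localizing subcategory of $\mathbf{K}(\Mod(kH))$. It therefore suffices to check the generators. For $L\subseteq H$ (automatically finite) we have $\ind_H^G k(H/L)\cong k(G/L)$, which lies in $\mathrm{perm}^{\fin}(G,k)$. Thus $\ind_H^G$ maps $\DPerm^{\fin}(H,k)$ into $\DPerm^{\fin}(G,k)$.

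\textbf{Restriction.} This is also exact and coproduct preserving, being a left adjoint to coinduction. So again we only need $\res^G_H k(G/K)\in \DPerm^{\fin}(H,k)$ for $K\subseteq G$ finite. By the Mackey decomposition of $G$-sets,
\[
\res^G_H k(G/K)\cong \bigoplus_{HgK\in H\backslash G/K} k(H/(H\cap gKg^{-1})).
\]
The sum may be infinite, since $H\backslash G/K$ can be infinite. However, it is a coproduct of finite permutation $kH$-modules, and coproducts in $\mathbf{K}(\Mod(kH))$ are computed degreewise. So the sum lies in the localizing subcategory $\DPerm^{\fin}(H,k)$.

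Since both subcategories are full, the adjunction from \cref{cons-ind-res-coind} restricts to an adjunction $\ind_H^G\dashv\res^G_H$ between $\DPerm^{\fin}(H,k)$ and $\DPerm^{\fin}(G,k)$.

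\textbf{Right adjoint.} Coinduction need not preserve $\DPerm^{\fin}$, so instead we use Brown representability. By \cref{rem-dperm is comp}, $\DPerm^{\fin}(G,k)$ is compactly generated. The restricted functor $\res^G_H\colon \DPerm^{\fin}(G,k)\to \DPerm^{\fin}(H,k)$ is triangulated and preserves coproducts. Here one notes that coproducts in the localizing subcategories agree with those in the ambient homotopy categories, and that restriction preserves ambient coproducts. Neeman's Brown representability theorem for compactly generated triangulated categories then provides a right adjoint.

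The main point of care is the Mackey step, namely that infinite double coset sums remain inside the localizing closure. The coproduct-preservation bookkeeping needed for Brown representability also requires attention.
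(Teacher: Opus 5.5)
Your proposal is correct and follows the same route as the paper. You check that induction and restriction send finite permutation generators into the respective localizing subcategories, via $\ind_H^G k(H/L)\cong k(G/L)$ and the Mackey decomposition. You then obtain the right adjoint of $\res^G_H$ from Brown representability, using the compact generation of $\DPerm^{\fin}(G,k)$ from \cref{rem-dperm is comp}.

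Your explicit remark that $\res^G_H k(G/K)$ may be an infinite coproduct of finite permutation $kH$-modules is a useful precision. Such a coproduct lies in the localizing subcategory $\DPerm^{\fin}(H,k)$ rather than in $\mathrm{perm}^{\fin}(H,k)$ itself, a point the paper's phrasing (``permutation modules to permutation modules'') glosses over.
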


\begin{proof}
  For the first claim we only need to verify that the restriction and induction functors take permutation modules to permutation modules. But this follows by an application of Mackey formula and the fact that 
  \[
  \ind_H^G(k(H/L))\simeq \ind_L^G(k)\simeq k(G/L)
  \]
  for any subgroup $L\subseteq H$.  The second claim is a consequence of Brown's representability and  \cref{rem-dperm is comp}.  
\end{proof}

\begin{Rem}\label{rem:conservativity}
    Note that the family of functors 
    \[
    \{\res^G_H\colon  \DPerm^{\fin}(G,k) \to \DPerm^{\fin}(H,k) \mid H \subseteq G \;\;\text{finite}\}
    \]
    is jointly-conservative. This is a consequence of the definition of $\DPerm^{\fin}(G,k)$. Indeed, an element $M\in \DPerm^{\fin}(G,k)$ such that $\res^G_H(M)\simeq 0$ for all finite subgroups $H\subseteq G$ must be lie in 
    \[
    \DPerm^{\fin}(G,k)\supseteq\mathrm{perm}^{\fin}(G,k)^\perp=0
    \]
    by the adjunction induction-restriction.
\end{Rem}

 \begin{Rem}
        For a group $G$ with a finite-dimensional model $X$ for $\underline{E}G$, it is also natural to consider  the category 
             \[
     \DPerm(G,k)\coloneqq \Loc_{\mathbf{K}(\Mod(kG))}(\mathrm{perm}(G,k))
     \]
   of all permutation modules and not just those arising from finite subgroups. If our group $G$ is infinite, we claim that  the restriction functors
     \[
     \res^G_H\colon  \DPerm(G,k) \to  \DPerm(H,k)
     \]
     for all finite subgroups of $H\subseteq G$ do not determine a family of jointly-conservative functors. Indeed, assume that the family of functors $\{\res^G_H\}_{H\in \Fin(G)}$ is jointly conservative. Then the augmentation map 
     \[
     C_\ast(X) \to k 
     \]
     is an isomorphisms up to restriction to any finite subgroup $H$ since the space $X$ is $H$-contractible; see the proof of \cref{prop-unit in Dperm}. It follows that the augmented complex $\overline{C}_\ast(X)$ is a contractible complex of $kG$--modules. In particular, we must have that for any $H\subseteq G$, the complex $\overline{C}_\ast(X)^H$ is contractible. But since $G$ is infinite and the isotropies of the permutation modules appearing in $C_\ast(X)$ are finite, we get that $\overline{C}_i(X)^G=0$ for all $i\neq 0$, and $\overline{C}_0(X)^G=k$. Hence $\overline{C}_\ast(X)$ cannot be $G$-contractible.

As a consequence of this observation, we get that $\DPerm^{\fin}(G,k)$ is a proper subcategory of $\DPerm(G,k)$.
    \end{Rem}

    \begin{Rem}
    Recall that the abelian category $\Mod(kG)$ of $kG$--modules has a symmetric monoidal structure given by the tensor product over $k$ equipped with the diagonal $G$--action. In particular, tensoring two permutation modules is again a permutation module by the Mackey formula. As a consequence, we obtain that $\DPerm^{\fin}(G,k)$ is stable under the tensor product of $\mathbf{K}(\Mod(kG))$. In fact, we will see later that it is in fact a tensor triangulated category. Note that the missing part is the monoidal unit.
    \end{Rem} 

Recall that $k$ denotes a commutative Noetherian ring. 

\begin{Prop}\label{prop-unit in Dperm}
    The derived category of permutation modules $\DPerm^{\fin}(G,k)$ is a tensor triangulated category with the tensor structure inherited form the one in $\mathbf{K}(\Mod(kG))$; the monoidal unit is given by the cellular chain complex $C_\ast(X)$ for any finite-dimensional model $X$ for $\underline{E}G$. 
\end{Prop}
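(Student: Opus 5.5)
The plan is to show that $C_\ast(X)$ lies in $\DPerm^{\fin}(G,k)$ and that tensoring with it acts as the identity, up to natural isomorphism, on every object of $\DPerm^{\fin}(G,k)$. Coherence of the unit constraints is then inherited from the symmetric monoidal structure of $\mathbf{K}(\Mod(kG))$, since $\DPerm^{\fin}(G,k)$ is already closed under $\otimes$ (previous remark).

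\textbf{Membership.} Since $X$ is a finite-dimensional $G$-CW complex with finite isotropy, each $C_i(X)$ is a (possibly infinite) direct sum of modules $k(G/H)$ with $H$ finite. The complex $C_\ast(X)$ is bounded, so it is an iterated extension (via brutal truncations) of the shifted modules $C_i(X)[i]$. Each $C_i(X)$ is a coproduct of finite permutation modules, and coproducts in $\mathbf{K}(\Mod(kG))$ are degreewise. Hence each $C_i(X)[i]$ lies in the localizing subcategory, and therefore so does $C_\ast(X)$. This is where finite-dimensionality is used.

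\textbf{Unit isomorphism.} Consider the augmentation $\varepsilon\colon C_\ast(X)\to k$ in $\mathbf{K}(\Mod(kG))$. For $M\in\DPerm^{\fin}(G,k)$, I claim $\varepsilon\otimes M\colon C_\ast(X)\otimes M\to M$ is an isomorphism. Let $F$ be the cone of $\varepsilon$, i.e. the augmented complex $\overline{C}_\ast(X)$. The objects $M$ with $F\otimes M\simeq 0$ form a localizing subcategory, because $\otimes_k$ commutes with coproducts and cones. So it suffices to take $M=k(G/H)$ with $H$ finite. By the projection formula,
\[
F\otimes k(G/H)\simeq \ind_H^G\res^G_H F ,
\]
so it suffices to show that $\res^G_H\overline{C}_\ast(X)$ is contractible as a complex of $kH$-modules. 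As an $H$-space, $X$ is a model for $\underline{E}H$ with $H$ finite, so $X^H$ is nonempty and contractible; in fact $X$ is $H$-equivariantly contractible. An $H$-homotopy equivalence $X\to\ast$ induces a chain homotopy equivalence $C_\ast(X)\to k$ of $kH$-complexes, which gives the required contraction. Naturality in $M$ is clear, and the symmetric statement follows by the symmetry of $\otimes$.

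\textbf{Independence of the model.} Two models are $G$-homotopy equivalent, so their chain complexes are isomorphic in $\mathbf{K}(\Mod(kG))$. In any case, units are unique up to isomorphism.

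\textbf{Main obstacle.} The delicate step is the $H$-equivariant contractibility, i.e. deducing from ``$X^L$ contractible for all $L\subseteq H$'' that $X\to\ast$ is an $H$-homotopy equivalence. This is the equivariant Whitehead theorem applied to the $H$-CW complex $\res_H X$, whose isotropy groups are subgroups of $H$ and hence finite. It yields an honest chain contraction, not merely a quasi-isomorphism, which is what is needed in the homotopy category $\mathbf{K}$.
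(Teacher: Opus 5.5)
Your proof is correct and follows essentially the same route as the paper. In both, the key input is the equivariant Whitehead theorem, which shows that $\res^G_H\overline{C}_\ast(X)$ is contractible for every finite $H\subseteq G$. The only difference is in how you globalize: you use the localizing subcategory $\{M \mid \overline{C}_\ast(X)\otimes M\simeq 0\}$ together with the projection formula, whereas the paper invokes joint conservativity of $\{\res^G_H\}_{H\ \mathrm{finite}}$ on $\DPerm^{\fin}(G,k)$ from \cref{rem:conservativity}, which is just the adjoint form of the same generation statement. Your explicit check that $C_\ast(X)$ lies in $\DPerm^{\fin}(G,k)$, and your remark on unit coherence, spell out points that the paper's proof leaves implicit.
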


\begin{proof}
 We already know that $\DPerm^{\fin}(G,k)$ is closed under the tensor product, it remains to verify the claim about the tensor unit.  Fix a finite subgroup $H\subseteq G$. By considering $X$ as a $H$-space, we claim that the augmented cellular chain complex  $\overline{C}_\ast(X)$  is $H$--contractible. Indeed, since $X^K$ is contractible for each $K\subseteq H$, by Whitehead's theorem we deduce that $X$ is $H$--contractible as desired. In other words, we get that the augmentation map  $C_\ast(X)\to k$ is a homotopy equivalence of complexes of $kH$--modules, and hence an isomorphism in $\DPerm^{\fin}(H,k)$ since $k$ is itself is in $\mathrm{perm}^{\fin}(H)$. Now, let $M$ be in $\DPerm^{\fin}(G,k)$. We need to show that $M\otimes C_\ast(X)\simeq M$. But note that the map   
\[
\mathrm{id}_M\otimes \epsilon \colon M\otimes C_\ast(X)\to M\otimes k\simeq M
\]
obtained by tensoring with the augmentation map is a homotopy equivalence after restriction to any finite subgroup  $H\subseteq G$ by the previous claim. Since the map $\mathrm{id}_M\otimes \epsilon$ is in $\DPerm^{\fin}(G,k)$, and $\res^G_H(\mathrm{id}_M\otimes \epsilon)$ is an isomorphism in $\DPerm^{\fin}(H,k)$ for any finite subgroup $H\subseteq G$, we conclude that $\mathrm{id}_M\otimes \epsilon$ is also an isomorphism in $\DPerm^{\fin}(G,k)$ by  \cref{rem:conservativity}.     
\end{proof}

\begin{Rem}
   Combining \cref{prop-unit in Dperm} and \cref{rem-dperm is comp} we see that the unit object in $ \DPerm^{\fin}(G,k)$ is compact if $G$ admits a finite model for $\underline{E}G$. 
\end{Rem}

\begin{Cor}
    Let $H\subseteq G$ be a finite subgroup. Then the restriction functor 
    \[
    \res^G_H\colon \DPerm^{\fin}(G,k) \to \DPerm^{\fin}(H,k)
    \]
    is symmetric monoidal. 
\end{Cor}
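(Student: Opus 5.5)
The restriction functor is already known to preserve tensor products: it is induced by the symmetric monoidal functor $\res^G_H\colon \mathbf{K}(\Mod(kG))\to \mathbf{K}(\Mod(kH))$ of \cref{cons-ind-res-coind}, and it maps $\DPerm^{\fin}(G,k)$ into $\DPerm^{\fin}(H,k)$ by \cref{prop-ind-res-permutation-mod}. This gives natural isomorphisms $\res^G_H(M\otimes N)\simeq \res^G_H M\otimes \res^G_H N$, compatible with associativity, symmetry and unit constraints of $\mathbf{K}(\Mod(kG))$. The only point needing care is that the unit of $\DPerm^{\fin}(G,k)$ is not $k$ but $C_\ast(X)$ by \cref{prop-unit in Dperm}, and $k$ does not lie in $\DPerm^{\fin}(G,k)$ for infinite $G$. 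So the plan is to show that $\res^G_H$ sends the unit $C_\ast(X)$ to the unit of $\DPerm^{\fin}(H,k)$, and that the unit constraints are compatible.

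For the unit, I would use the claim established in the proof of \cref{prop-unit in Dperm}. Since $X^K$ is contractible for every $K\subseteq H$, $X$ is $H$-contractible, so the augmentation $\epsilon\colon \res^G_H C_\ast(X)\to k$ is a homotopy equivalence of complexes of $kH$-modules. Since $H$ is finite, $k=k(H/H)$ lies in $\mathrm{perm}^{\fin}(H,k)$ and is the unit of $\DPerm^{\fin}(H,k)$. (Alternatively, $X$ viewed as an $H$-space is a finite-dimensional model for $\underline{E}H$, and one can invoke \cref{prop-unit in Dperm} for $H$.) This gives the unit isomorphism $\unit_H\simeq \res^G_H(\unit_G)$.

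Next I would check that the unitor of $\DPerm^{\fin}(G,k)$ restricts to the unitor of $\DPerm^{\fin}(H,k)$. The unitor $M\otimes C_\ast(X)\to M$ is, by construction, $\id_M\otimes\epsilon$ composed with the canonical isomorphism $M\otimes k\simeq M$ in $\mathbf{K}(\Mod(kG))$. Its restriction is $\id_{\res M}\otimes \res(\epsilon)$ followed by the canonical isomorphism for $kH$-modules, which is exactly the unitor of $\DPerm^{\fin}(H,k)$ transported along $\res(\epsilon)$. The remaining coherence conditions are inherited from the ambient symmetric monoidal functor. This also shows that the unit constraint is simply the restricted augmentation, not an arbitrary isomorphism.

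I expect the main, though minor, obstacle to be keeping the units straight: $\DPerm^{\fin}(G,k)$ is not a monoidal subcategory of $\mathbf{K}(\Mod(kG))$ in the unital sense, so one cannot quote monoidality of the ambient restriction directly. It has to be combined with the $H$-contractibility of $X$. If an $\infty$-categorical enhancement is wanted, I would present $\DPerm^{\fin}(G,k)$ as a localization (smashing idempotent: $\unit_G\otimes -$) of the non-unital monoidal ambient category and argue that restriction preserves the idempotent algebra $C_\ast(X)$ up to equivalence.
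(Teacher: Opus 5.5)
Your proposal is correct and is essentially the paper's proof. Compatibility with $\otimes$ is inherited from the symmetric monoidal restriction on $\mathbf{K}(\Mod(kG))$, and the unit is handled by the $H$-contractibility of $X$ already established in the proof of \cref{prop-unit in Dperm}; your explicit check that the unitors match is a harmless refinement the paper leaves implicit.

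One caveat on your optional $\infty$-categorical aside. The functor $C_\ast(X)\otimes-$ does not cut out $\DPerm^{\fin}(G,k)$ inside $\mathbf{K}(\Mod(kG))$: for $G=1$ and $k=\Z$ it is the identity, while $\DPerm(1,\Z)=\Loc\langle\Z\rangle$ is a proper subcategory. So $\DPerm^{\fin}(G,k)$ is only a tensor-closed full subcategory of that colocalization containing its unit $C_\ast(X)$. That still suffices for an enhancement, but it is not needed here.
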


\begin{proof}
    Recall that  the monoidal structure of $\DPerm^{\fin}(G,k)$ is induced from the one in $\mathbf{K}(\Mod(kG))$ and the restriction functor $\mathbf{K}(\Mod(kG))\to \mathbf{K}(\Mod(kH))$ is symmetric  monoidal. Hence, we obtain that 
    \[
    \res^G_H(M\otimes N)\simeq \res^G_H(M)\otimes \res^G_H(N)
    \]
    in $\DPerm^{\fin}(H,k)$ for any $M$ and $N$ in $\DPerm^{\fin}(G,k)$. It remains to verify that $\res^G_H(C_\ast(X))\simeq k$. But we already showed this in the proof of the previous proposition. 
\end{proof}

\section{Functoriality}\label{sec-functoriality}

In the previous sections we defined three categories of representations
\[
\mathbf{StMod}(kG) \quad \mathbf{K}(\Proj(kG)) \quad \mathrm{and} \quad \DPerm^{\fin}(G,k)
\]
and in this section we discuss the functoriality of these categories on the group variable. Let us first introduce two important indexing categories and explain the relation between them.

\begin{Def}\label{def-G-Orb-proper}
    Let $G$ be a discrete group. We write $\Orb_G$ for the $G$-orbit $\infty$-category whose objects are the cosets $G/H$ for $H$ a subgroup of $G$, and whose morphisms are the $G$-equivariant maps. If $\cat F$ is a collection of subgroups of $G$, we write $\Orb_{G,\cat F}$ for the full subcategory of the orbit $G$-category on objects with isotropy in $\cat F$. If $\cat F$ consists of the finite subgroups of $G$, then we write $\Orb_{G,\fin}$ rather than $\Orb_{G, \cat F}$. 
\end{Def}
It will be helpful to have a model of the orbit category as a $2$-category. To this end, recall from \cite[Theorem 00AC]{kerodon} that the Duskin Nerve gives us a way to view $2$-categories as $\infty$-categories, in the same way the usual nerve construction allows us to view $1$-categories as $\infty$-categories.  
\begin{Rem}\label{rem-orbit-2-category}
    Consider the following strict $(2,1)$-category $\mathbf{Orb}_{G}$:
    \begin{itemize}
    \item objects are cosets $G/H$ with $H\subseteq G$ a subgroup;
    \item $1$-morphisms $G/H \to G/K$ are given by $g \in G$ such that $c_g(H)\subseteq K$;
    \item $2$-morphisms $g \Rightarrow g'$ are given by $k \in K$ such that $kg=g'$.
    \end{itemize}
    The Duskin Nerve of $\mathbf{Orb}_{G}$ is a model for the $\infty$-category $\Orb_{G}$. Similar statements hold if we restrict the isotropy.
\end{Rem}
 \begin{Def}
     We define the $\infty$-category $\Orb$ as the Duskin Nerve of the strict $(2,1)$-category $\mathbf{Orb}$ of groups (not necessarily finite), injective groups homomorphisms and conjugations.
 \end{Def}
\begin{Cons}\label{lem-can-functor}
    Let $G$ be a discrete group. We define a strict $2$-functor $\mathbf{Orb}_{G} \to \mathbf{Orb}$ as follows:
    \begin{itemize}
        \item On objects, the functor sends the coset $G/H$ to $H$.
        \item On $1$-morphisms, the functor sends an element $g \in G$ such that $c_g(H)\subseteq K$ to the inner automorphism $c_g\colon H \to K$.
        \item On $2$-morphisms, the functor sends an element $k\in K$ such that $kg=g'$  to the inner automorphism $c_k \colon c_g \Rightarrow c_{g'}$.
    \end{itemize}
    Applying the Duskin Nerve to the above functor and using \cref{rem-orbit-2-category} we obtain a functor of $\infty$-categories
    \[
    \kappa \colon  \Orb_G \to \Orb.
    \]
    If we restrict the isotropy to a collection of subgroups $\cat F$ we find a functor 
   \[
  \kappa \colon \Orb_{G,\cat F} \to \Orb.   
   \]
\end{Cons}

\begin{Cons}\label{cons-funct-Kproj-KMod}
The starting point is to note that there is a functor 
 \[
 \Mod(k -) \colon \Orb^{\opname} \to \Cat^\otimes, 
 \]
 sending a group $G$ to the symmetric monoidal abelian category $(\Mod(kG), \otimes_k)$, an injective group homomorphism $ i\colon H \to G$ to the restriction functor 
 \[
 \res^G_H\colon \Mod(kG) \to \Mod(kH)
 \]
 and a conjugation $c_g\colon i \to j$ between two inclusions $i,j \colon H \to G$, so $c_g \circ i =j$, to the transformation 
 \[
i^* l_g \colon j^*=i^*c_g^* \Rightarrow i^*
 \]
 where $l_g\colon c_g^* \Rightarrow \mathrm{id}_G$ is the natural transformation induced by the left multiplication by $g$.
 
 Passing to chain complexes and applying the dg-nerve give rise to a functor 
 \[
 \mathbf{K}(\Mod(k -)) \colon \Orb^{\opname} \to\widehat{\Cat}^\otimes_\infty
 \]
 into symmetric monoidal $\infty$-categories.
 We can restrict this functor further to $\Orb_G$ or $\Orb_{G,\cat F}$ for a discrete group $G$ using the maps described in \cref{lem-can-functor}.
\end{Cons}
Recall the notation $\CAlg(\PrL)$ from the convention section.  
Inverting quasi-isomorphisms pointwise gives us the following:
\begin{Cor}\label{cor-funct-der}
   Let $k$ be a commutative ring and $G$ be a discrete group.  Then the functor from \cref{cons-funct-Kproj-KMod} induces a functor on derived categories
    \[
 \mathbf{D}(k -) \colon \Orb_{G}^{\opname}\to\CAlg(\PrL).
 \]
\end{Cor}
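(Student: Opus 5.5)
Starting from the functor $\mathbf{K}(\Mod(k-))\colon \Orb_G^{\opname}\to \widehat{\Cat}^\otimes_\infty$ of \cref{cons-funct-Kproj-KMod}, precomposed with $\kappa$ from \cref{lem-can-functor}, I will obtain the derived-category functor by Dwyer--Kan localizing each value at its quasi-isomorphisms. Concretely, consider the $\infty$-category of pairs $(\cat C, W)$ with $\cat C$ a symmetric monoidal $\infty$-category and $W$ a class of morphisms closed under tensoring with arbitrary objects. Symmetric monoidal localization $(\cat C,W)\mapsto \cat C[W^{-1}]$ is functorial on such pairs (Hinich, Lurie HA 4.1.7.4): it is left adjoint to $\cat D\mapsto(\cat D,\mathrm{equivalences})$. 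So I need to lift the functor to pairs, which means checking two things. First, each restriction $\res^G_H$ preserves quasi-isomorphisms. This is clear, since restriction is exact and does not change the underlying complex of $k$-modules. Second, the quasi-isomorphisms in $\mathbf{K}(\Mod(kG))$ are stable under $-\otimes_k M$. This is the delicate point, since $\otimes_k$ is not exact for general $k$, and it is why I expect the main obstacle to lie here.

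To handle the tensor issue, I will not use the whole homotopy category. Instead I will restrict to the full subcategory of K-flat complexes (for instance, complexes that are degreewise $k$-flat and K-flat over $k$). K-flatness over $k$ is all that is needed, because the diagonal tensor product has the $k$-tensor as its underlying complex. This subcategory is closed under tensor products and contains the unit $k$. Restriction preserves it, since K-flatness is a property of the underlying $k$-complex. On it, tensoring preserves quasi-isomorphisms. Moreover, its localization still computes $\mathbf{D}(kG)$, because every complex of $kG$-modules admits a quasi-isomorphism from a K-projective complex of $kG$-modules, which is K-flat over $k$ since $kG$ is $k$-free. Restricting the functor of \cref{cons-funct-Kproj-KMod} to this full subfunctor and localizing pointwise then yields a functor to $\CAlg(\widehat{\Cat}_\infty)$ whose value at $G/H$ is $\mathbf{D}(kH)$ with its derived tensor product.

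Finally, I must check that the functor lands in $\CAlg(\PrL)$. Each $\mathbf{D}(kH)$ is presentable and stable, being the derived $\infty$-category of the Grothendieck abelian category $\Mod(kH)$. Its derived tensor product preserves colimits in each variable. Each $\res^G_H$ preserves colimits, since it preserves quasi-isomorphisms and direct sums and has the right adjoint $\CoInd^G_H$ on derived categories. Each $\res^G_H$ is also symmetric monoidal. These are properties of the objects and morphisms rather than extra structure, so the functor factors through the non-full subcategory $\CAlg(\PrL)\subseteq \CAlg(\widehat{\Cat}_\infty)$.
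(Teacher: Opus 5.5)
Your proposal is correct and follows the paper's route: the paper's only justification is that one inverts quasi-isomorphisms pointwise in the functor $\mathbf{K}(\Mod(k-))$ of \cref{cons-funct-Kproj-KMod}, and that is exactly what you do. Your extra steps fill in what the paper leaves implicit. The first is restricting to complexes that are K-flat over $k$, so that quasi-isomorphisms are stable under $\otimes_k$ and the symmetric monoidal Dwyer--Kan localization applies; this is genuinely needed when $k$ is not a field. The second is your check that the values and the restriction functors land in $\CAlg(\PrL)$.
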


\begin{Cor}\label{cor-functor-kproj}
    Let $k$ be as in \cref{hyp} and $G$ be a group of type $\Phi_k$.
    Then the functor from \cref{cons-funct-Kproj-KMod} restricts to a functor 
    \[
 \mathbf{K}(\Proj(k -)) \colon \Orb_{G}^{\opname}\to\PrL. 
 \]
\end{Cor}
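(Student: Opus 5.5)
The plan is to obtain $\mathbf{K}(\Proj(k-))$ as a full subfunctor of the functor $\mathbf{K}(\Mod(k-))\colon \Orb^{\opname}\to\widehat{\Cat}_\infty^\otimes$ from \cref{cons-funct-Kproj-KMod}, restricted along $\kappa\colon\Orb_G\to\Orb$ of \cref{lem-can-functor}, and then to check that everything lands in $\PrL$. We forget the symmetric monoidal structure, since in general $\mathbf{K}(\Proj(kH))$ need not have a unit. We also pass to the underlying stable $\infty$-categories, writing $\mathbf{K}(\Mod(kH))$ for the dg-nerve of chain complexes.

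\textbf{Step 1 (objectwise restriction).} For each $G/H\in\Orb_G$, consider the full subcategory $\mathbf{K}(\Proj(kH))\subseteq\mathbf{K}(\Mod(kH))$. The edges of $\Orb_G^{\opname}$ are sent to composites of restriction functors $\res^K_{c_g(H)}$ and conjugations $c_g^*$, and both preserve complexes of projective modules; for restriction this was recorded in \cref{prop-ind-res-kproj-mod}. A functor into $\widehat{\Cat}_\infty$ whose values are replaced by full subcategories preserved by all edge functors restricts to a functor with those values: full subcategories are determined by sets of objects, so higher coherences are automatic. This can be phrased via the cocartesian unstraightening, taking the full subcategory of the total space on the fibrewise objects lying in $\mathbf{K}(\Proj(kH))$. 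The result is a functor $\Orb_G^{\opname}\to\widehat{\Cat}_\infty$ with values $\mathbf{K}(\Proj(kH))$. Each value is stable, as a full stable subcategory closed under shifts and cones, and each transition functor is exact.

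\textbf{Step 2 (landing in $\PrL$).} Since $\PrL\to\widehat{\Cat}_\infty$ is a (non-full) subcategory inclusion, it suffices to check two things: each value is presentable, and each transition functor preserves colimits. For presentability, every subgroup $H\subseteq G$ is again of type $\Phi_k$, because that class is closed under subgroups. Hence $\mathbf{K}(\Proj(kH))$ is compactly generated by \cref{prop-kproj-compl}, and therefore presentable. For the functors, $\res^K_H$ on $\mathbf{K}(\Proj(-))$ has the right adjoint given by coinduction on $\mathbf{K}(\Mod)$ restricted appropriately; alternatively, and more simply, it commutes with arbitrary coproducts, computed degreewise in both categories. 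An exact functor between stable $\infty$-categories that preserves coproducts preserves all small colimits. Conjugation isomorphisms are equivalences. Hence all edges lie in $\PrL$.

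The main obstacle is the coherence in Step 1: making precise that restricting a functor of $\infty$-categories to objectwise full subcategories yields a functor. We would handle this with the unstraightening argument above, since it avoids any rectification. One remaining point is checking that coproducts in $\mathbf{K}(\Proj(kH))$ agree with those in $\mathbf{K}(\Mod(kH))$. This holds because arbitrary direct sums of projective modules are projective.
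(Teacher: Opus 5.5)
Your proposal is correct and matches the paper's proof. Restriction, including restriction along conjugations, preserves projective modules, so the functor of \cref{cons-funct-Kproj-KMod} restricts objectwise to $\mathbf{K}(\Proj(k-))$; it lands in $\PrL$ by \cref{prop-kproj-compl} (applied to subgroups, which are again of type $\Phi_k$) together with the fact that restriction preserves all colimits. Your unstraightening and coproduct arguments simply make these two steps explicit.

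One aside should be fixed. When $[K:H]$ is infinite, $\mathrm{coind}_H^K$ need not preserve projectives, so it gives a right adjoint only after composing with the right adjoint of the inclusion $\mathbf{K}(\Proj(kK))\subseteq\mathbf{K}(\Mod(kK))$. Your coproduct argument does not need this.
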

\begin{proof}
    It suffices to notice that the restriction functors preserves projective modules, so the functor from \cref{cons-funct-Kproj-KMod} restricts to $ \mathbf{K}(\Proj(k -))$. The fact that the functor lands in $\PrL$ then follows from \cref{prop-kproj-compl} and the fact that the restriction functors preserves all colimits.
\end{proof}
We will see in  \cref{cor-kproj-ttcat} that the above functor lifts to $\CAlg(\PrL)$.

\begin{Cor}\label{cor-funct-dperm}
     Let $k$ be as in \cref{hyp} and let $G$ be a group with finite-dimensional model for $\underline{E}G$. Then the functor from \cref{cons-funct-Kproj-KMod} restricts to a functor 
     \[
 \DPerm^{\fin}(-,k) \colon \Orb_{G}^{\opname} \to\CAlg(\PrL). 
 \]
\end{Cor}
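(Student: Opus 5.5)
The plan follows the proof of \cref{cor-functor-kproj}, with one extra step for the symmetric monoidal structure. Restricting along $\kappa\colon \Orb_G\to\Orb$ from \cref{lem-can-functor}, \cref{cons-funct-Kproj-KMod} gives a functor $\mathbf{K}(\Mod(k-))\colon\Orb_G^{\opname}\to\widehat{\Cat}^\otimes_\infty$. For each subgroup $H\subseteq G$ we take the full subcategory $\DPerm^{\fin}(H,k)\subseteq\mathbf{K}(\Mod(kH))$ and argue in three steps.

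\textbf{Step 1: restriction preserves these subcategories.} Each structure map is $\res$ along an injective homomorphism $c_g\colon H\to K$. It is exact and preserves coproducts, so it is enough to check it on generators. Mackey gives
\[
\res^K_H k(K/L)\simeq\bigoplus_{HxL} k(H/H\cap {}^xL),
\]
and each summand is a finite permutation module when $L$ is finite. This decomposition uses finiteness of the double coset space only for finite sums. For infinite $H$ the sum may be infinite, but it is still a coproduct of finite permutation modules, so it lies in the localizing subcategory. A full subcategory stable under the structure maps gives a subfunctor, because full subcategories are coherently preserved and the $2$-cell data come from the ambient functor. This yields $\DPerm^{\fin}(-,k)\colon\Orb_G^{\opname}\to\widehat{\Cat}_\infty$.

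\textbf{Step 2: the values and maps lie in $\PrL$.} Every subgroup $H$ of $G$ has a finite-dimensional model for $\underline{E}H$, namely $\underline{E}G$ restricted to $H$. By \cref{rem-dperm is comp}, $\DPerm^{\fin}(H,k)$ is compactly generated, hence presentable and stable. Restriction preserves all colimits, so it is a left adjoint by the adjoint functor theorem; \cref{prop-ind-res-permutation-mod} already records the right adjoint via Brown representability.

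\textbf{Step 3: symmetric monoidality, the main obstacle.} The subcategory $\DPerm^{\fin}(H,k)$ is closed under $\otimes$, but its unit is $C_\ast(X)$ (\cref{prop-unit in Dperm}), not the unit $k$ of $\mathbf{K}(\Mod(kH))$. So it is not a symmetric monoidal subcategory in the naive sense, and the functor cannot simply be restricted. I would realize $\DPerm^{\fin}(H,k)$ as the idempotent algebra $C_\ast(X)\otimes-$ acting on the ambient category, presumably an idempotent algebra in the sense of Lurie, Higher Algebra, Section 4.8.2, rather than as a plain subcategory.

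First I would check that the augmentation $\epsilon\colon C_\ast(X)\to k$ makes $C_\ast(X)$ an idempotent coalgebra. That is, $\mathrm{id}\otimes\epsilon\colon C_\ast(X)\otimes C_\ast(X)\to C_\ast(X)$ should be an equivalence, which is \cref{prop-unit in Dperm} applied to $M=C_\ast(X)$.

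Then I would identify $\DPerm^{\fin}(H,k)$ with the colocalization $C_\ast(X)\otimes\mathbf{K}(\Mod(kH))$. This needs two inclusions:
\begin{itemize}
\item $C_\ast(X)$ lies in $\DPerm^{\fin}$, since its terms are sums of finite permutation modules and $X$ is finite-dimensional;
\item every object of $\DPerm^{\fin}$ is $C_\ast(X)$-local, again by \cref{prop-unit in Dperm}.
\end{itemize}
Idempotent (co)algebras give a symmetric monoidal (co)localization. Such categories are functorial in the pair (category, idempotent object), provided the structure maps send idempotent to idempotent.

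Here $\res^G_H C_\ast(X)$ is a model of $C_\ast(\underline{E}H)$. I would construct $X$ once over $G$ and use its restriction for every subgroup. The remaining task is to do this coherently over $\Orb_G$. For that I would use the idempotent-object description, which turns a property (being in the essential image) into structure, so that no coherence choices have to be made by hand.
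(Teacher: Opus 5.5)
Your Steps 1 and 2 match the paper's argument. The paper's proof only observes that restriction preserves finite permutation modules, so the functor of \cref{cons-funct-Kproj-KMod} restricts. It then gets $\CAlg(\PrL)$ by citing \cref{rem-dperm is comp}, \cref{prop-ind-res-permutation-mod} and \cref{prop-unit in Dperm}. You are right that the unit needs more care than a plain restriction.

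However, Step 3 as written rests on a false identification. Your two bullets only give $\DPerm^{\fin}(H,k)\subseteq C_\ast(X)\otimes\mathbf{K}(\Mod(kH))$. The reverse inclusion would need $C_\ast(X)\otimes M\in\DPerm^{\fin}(H,k)$ for \emph{every} $M\in\mathbf{K}(\Mod(kH))$. Knowing $C_\ast(X)\in\DPerm^{\fin}(H,k)$ does not give this, because $\DPerm^{\fin}(H,k)$ is closed under $\otimes$ but is not a tensor ideal of $\mathbf{K}(\Mod(kH))$.

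The reverse inclusion is in fact false. For finite $H$ the augmentation $C_\ast(X)\to k$ is a $kH$-homotopy equivalence, so your colocalization is all of $\mathbf{K}(\Mod(kH))$. Take $H=1$ and $k=\mathbb{Z}$, so that $\DPerm^{\fin}(1,\mathbb{Z})=\Loc\langle\mathbb{Z}\rangle$. The acyclic, non-contractible complex $\cdots\to\mathbb{Z}/4\xrightarrow{2}\mathbb{Z}/4\xrightarrow{2}\mathbb{Z}/4\to\cdots$ is nonzero in $\mathbf{K}(\Mod(\mathbb{Z}))$. It is right orthogonal to all shifts of $\mathbb{Z}$, so it cannot lie in $\Loc\langle\mathbb{Z}\rangle$ (its identity would vanish).

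The repair is to embed rather than identify. The $C_\ast(X)$-colocal objects are those $M$ with $C_\ast(X)\otimes M\xrightarrow{\sim}M$. They form a symmetric monoidal $\infty$-category with unit $C_\ast(X)$. Restriction is symmetric monoidal between these categories, since it preserves $\otimes$, $C_\ast(X)$ and the augmentation.

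By your two bullets and the closure of $\DPerm^{\fin}(H,k)$ under $\otimes$, $\DPerm^{\fin}(H,k)$ is a full subcategory of the colocal category that contains its unit and is closed under $\otimes$. By Step 1 it is also preserved by restriction. It is therefore a full symmetric monoidal subcategory. Your Step 1 subfunctor argument, run with symmetric monoidal $\infty$-categories and combined with Step 2, then gives the lift to $\CAlg(\PrL)$.

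The coherence you worry about is automatic. Since $G/G$ is terminal in $\Orb_G$, the single idempotent coalgebra $C_\ast(X)$ in $\mathbf{K}(\Mod(kG))$ already determines a compatible family of them over $\Orb_G^{\opname}$.
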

\begin{proof}
    The restriction functors preserves permutation modules, so the functor from \cref{cons-funct-Kproj-KMod} restricts to $ \DPerm^{\fin}(-,k)$. The fact that the functor lands in the correct category follows by combining \cref{rem-dperm is comp}, \cref{prop-ind-res-permutation-mod} and \cref{prop-unit in Dperm}.
\end{proof}

\begin{Cor}\label{cor-funct-stmod}
     Let $k$ be as in \cref{hyp} and let $G$ be a group of type $\mathrm{H}\Phi_k$. Then the stable module category defines a functor 
     \[
 \mathbf{StMod}(k-) \colon \Orb_{G}^{\opname} \to\CAlg(\PrL). 
 \]
\end{Cor}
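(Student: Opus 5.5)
The plan is to build the functor at the level of symmetric monoidal model categories and then pass to underlying $\infty$-categories, in the same way as for $\mathbf{K}(\Mod(k-))$. By \cref{cons-funct-Kproj-KMod}, restriction already gives a functor $\Mod(k-)\colon \Orb^{\opname}\to \Cat^\otimes$. We precompose it with $\kappa\colon \Orb_G\to\Orb$ from \cref{lem-can-functor}. Every subgroup $H\subseteq G$ is again an $\mathrm{H}\Phi_k$-group, because the hierarchy is inherited by restricting the $G$-CW complexes to $H$, and the base case $\Phi_k$ is subgroup closed. Hence each $\Mod(kH)$ carries the symmetric monoidal combinatorial model structure of \cref{rec-fib and weakequi}.

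Next we check that the functors in the diagram are compatible with these model structures. By \cref{prop-induction-stmod}, each restriction $\res^G_H$ along an injective map is both left and right Quillen, so it preserves cofibrations and trivial cofibrations. By Ken Brown's lemma it then preserves weak equivalences between all objects, since every object is fibrant (fibrations are just surjections). Thus restriction preserves weak equivalences outright, and we can localize it directly without deriving it. The same holds for the conjugation functors $c_g^*$ and the natural isomorphisms $l_g$, since these are equivalences of model categories. Restriction is strictly symmetric monoidal for $\otimes_k$ with diagonal action and preserves the unit $k$.

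We then apply the symmetric monoidal localization functor, which sends a symmetric monoidal relative category (weak equivalences closed under tensoring with cofibrant objects) to a symmetric monoidal $\infty$-category. This yields $\mathbf{StMod}(k-)\colon \Orb_G^{\opname}\to \CAlg(\widehat{\Cat}_\infty)$. One subtlety is that $\otimes$ is only homotopically correct on cofibrant objects. To handle this, we either restrict to the full subcategory of cofibrant objects, which is stable under restriction by \cref{prop-induction-stmod}, or we use that the model structure is symmetric monoidal, as in \cite{emmanouil2025monoidal}. The localization $\mathbf{StMod}(kH)$ is then presentable and stable with $\otimes$ commuting with colimits in each variable (\cref{rem-stmod-sym-mon}). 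Each $\res$ is a left adjoint because it is left Quillen, so it preserves colimits. The functor therefore factors through $\CAlg(\PrL)$.

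The main obstacle is the coherence of the $2$-categorical data: turning a strict $(2,1)$-functor into model categories into a functor of $\infty$-categories after localization. The approach is to mimic the route of \cref{cons-funct-Kproj-KMod}. The assignment $H\mapsto(\Mod(kH)^{\cof},W)$ is a pseudo-functor into symmetric monoidal relative categories, and we compose it with the $\infty$-functorial localization $\mathrm{RelCat}^\otimes\to\Cat_\infty^\otimes$. As an alternative, if that is too delicate, we can use \cref{rem-ac=stmod} to present $\mathbf{StMod}(kH)$ as the dg-nerve of the dg category of acyclic complexes of projectives, embedded in $\mathbf{K}(\Mod(kH))$. Restriction preserves acyclic complexes of projectives, so this is a subfunctor of $\mathbf{K}(\Mod(k-))$ restricted to $\Orb_G$. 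The monoidal structure in that alternative would then be transported along the equivalence with the model-categorical localization.
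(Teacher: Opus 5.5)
Your proposal is sound in outline, and the route you list as a fallback is the paper's proof. The paper identifies $\mathbf{StMod}(kH)$ with $\mathbf{Ac}(\Proj(kH))$ via \cref{rem-ac=stmod}. It obtains the functor as a subfunctor of $\mathbf{K}(\Mod(k-))$ from \cref{cons-funct-Kproj-KMod}, since restriction is exact and preserves projectives. It then reads off presentability, the monoidal structure and colimit preservation from \cref{cor-stmod-compactly-gen}, \cref{rem-stmod-sym-mon} and \cref{prop-induction-stmod}.

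Your primary route, localizing the model categories directly, is genuinely different, and it has three advantages:
\begin{itemize}
\item It works with the actual definition of $\mathbf{StMod}(kH)$ as a localization of $\Mod(kH)$. So it does not need an $\infty$-categorical, rather than merely triangulated, identification with acyclic complexes of projectives. The paper takes that identification for granted, since \cref{rem-ac=stmod} only provides the triangulated one.
\item Your observation that restriction preserves all weak equivalences means nothing has to be derived. What gives this is the right Quillen half of Ken Brown's lemma together with fibrancy of every object, not the cofibration half you mention first.
\item Presentability comes from combinatoriality rather than from compact generation.
\end{itemize}
The paper's route, in exchange, gets all the $(2,1)$-categorical coherence for free from the already constructed $\mathbf{K}(\Mod(k-))$.

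One caveat concerns your first remedy for the tensor product, restricting to cofibrant objects. This does not produce symmetric monoidal relative categories, because the unit $k$ is not Gorenstein projective over $kH$ as soon as $H$ contains an element of infinite order. Indeed, $k\Z$ has finite global dimension, so its Gorenstein projectives are projective, and $k$ is not projective over $k\Z$. Restriction preserves Gorenstein projectives, so the same failure propagates to $kH$. Hence $(\Mod(kH)^{\cof},W)$ is only non-unital. You would have to supply the unit separately, for example by checking that a cofibrant replacement of $k$ is a quasi-unit and is preserved by restriction. It is preserved, since restriction is both left and right Quillen. The paper has the same issue, since $k\notin\mathbf{Ac}(\Proj(kH))$, and resolves it only by appeal to \cref{rem-stmod-sym-mon}, which is your second option. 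Compare \cref{cor-kproj-ttcat}, where the analogous unit problem for $\mathbf{K}(\Proj(kG))$ is handled explicitly.
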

\begin{proof}
    By \cref{rem-ac=stmod} we can identify the stable module category with the subcategory of acyclic complexes of projectives. Therefore the required functor is obtained from \cref{cons-funct-Kproj-KMod} by noting that the restriction functor preserves acyclic complexes of projectives since it is exact.  The fact that the functor lands in the correct category follows by combining \cref{rem-stmod-sym-mon}, \cref{prop-induction-stmod} and \cref{cor-stmod-compactly-gen}.
\end{proof}

\section{Descent}\label{sec-descent}
The goal of this section is to prove a descent result for special types of functors out of the orbit $G$-category.  Let us first discuss the type of functors we are interested in. 

\begin{Not}\label{nota-C-orbG}
For a group $G$, we write $\cat S_{G}$ for the $\infty$-category of $G$-spaces which is defined as the presheaf category on $\Orb_G$. Then any functor $\cat C \colon \Orb_G^{\opname}\to \PrL$ admits an unique limit-preserving extension to a functor $\cat S_{G}^{\opname} \to \PrL$, which by a slight abuse of notation we still denote by $\cat C$. To simplify the notation, we will write $\cat C(H)$ for the value of the functor $\cat C$ at the orbit $G/H$.
\end{Not}
Recall from \cref{conventions} that a collection of subgroups of a group $G$ is assumed to be closed under $G$-conjugation. 
\begin{Hyp}\label{hyp-functor}
    Let $\cat F$ be a collection of subgroups of a group $G$, and let $\cat C$ be a functor as in \cref{nota-C-orbG}. Assume in addition that:
    \begin{enumerate}
        \item For any $G$-equivariant map $f \colon G/K \to G/H$, the corresponding functor $f^* \colon \cat C(H) \to \cat C(K)$ admits a left adjoint $f_!$. When $f$ is induced by an inclusion we will write $f^*=\res^H_K$ and $f_!=\ind_K^H$.
        \item For any pullback square in $\cat S_{G}$
    \[
    \begin{tikzcd}
        X \arrow[r] \arrow[d] & G/L\arrow[d] \\
        G/H \arrow[r] & G/K
    \end{tikzcd}
    \]
    the corresponding diagram 
    \[
    \begin{tikzcd}
        \cat C(K)\arrow[r] \arrow[d] & \cat C(H)\arrow[d] \\
        \cat C(L) \arrow[r] & \cat C(X)
    \end{tikzcd}
    \]
    is horizontally left adjointable, see \cite[Definition 4.7.4.13]{HA}. 
    \item The collection of functors $\{\res^G_H\}_{H \in \cat F}$ is jointly conservative. 
    \end{enumerate}
\end{Hyp}

\begin{Rem}\label{rem-G-presentable}
Using a language more familiar to representation theorists, condition (b) above is requiring that restriction and induction functors satisfy the Mackey formula, also known as the double coset formula. On the other hand using the language of parametrized homotopy theory, we can compactly phrase conditions (a) and (b) as stating that the functor $\cat C$ is $G$-presentable, see \cite[Definition 4.3 and Lemma 4.9]{CLL23}. In particular comparing with \cite[Definition 2.16]{CLL23} we see that (a) and (b) imply that:
\begin{itemize}
    \item[(a')] for any map of $G$-spaces $f \colon X \to Y$, the induced functor $f^*$ admits a left adjoint $f_!$;
    \item[(b')] pullback squares in $G$-spaces (and not just those arising from orbits) give rise to a left adjointable squares. 
\end{itemize}
\end{Rem}

The goal of this section is to prove the following descent result. 

\begin{Thm}\label{thm-descent}
    In the situation of \cref{hyp-functor}, the restriction functors induce an equivalence of stable $\infty$-categories
    \[
    \cat C(G) \xrightarrow{\sim}\lim_{G/H \in \Orb_{G,\cat F}^{\opname}} \cat C(H).
    \]
    Moreover if the functor $\cat C$ lifts to $\CAlg(\PrL)$, then the above equivalence is symmetric monoidal.
\end{Thm}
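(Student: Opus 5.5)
We plan to realize the limit as a category of modules over a monad and then apply Barr--Beck--Lurie, in the spirit of \cite{Mat16} and \cite{MNN2017}. Let $E = E_{\cat F}G$ be the universal $G$-space for $\cat F$, i.e.\ the presheaf on $\Orb_G$ given by $G/H\mapsto *$ if $H\in\cat F$ and $\emptyset$ otherwise. Equivalently, $E=\colim_{G/H\in \Orb_{G,\cat F}} G/H$ in $\cat S_G$. Since the extension $\cat C\colon \cat S_G^{\opname}\to\PrL$ of \cref{nota-C-orbG} preserves limits, we get
\[
\cat C(E)\simeq \lim_{G/H\in\Orb_{G,\cat F}^{\opname}}\cat C(H).
\]
Let $p\colon E\to *$ be the terminal map. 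Under this identification, the comparison functor of the theorem is $p^*\colon \cat C(G)=\cat C(*)\to\cat C(E)$. It therefore suffices to show that $p^*$ is an equivalence.

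By \cref{rem-G-presentable}(a') the functor $p^*$ admits a left adjoint $p_!$, and $p^*$ also preserves colimits since it is a map in $\PrL$. The strategy is to check three things. First, $p^*$ is conservative. Second, the counit $p_!p^*X\to X$ is an equivalence for every $X$. Third, the unit $Y\to p^*p_!Y$ is an equivalence for every $Y$. The first two together imply that $p^*$ is fully faithful, since full faithfulness of a right adjoint is equivalent to its counit being an equivalence. Conservativity is the easy step. Evaluating $p^*X$ at the point $G/H\to E$ (which exists for $H\in\cat F$) recovers $\res^G_HX$. Hence if $p^*X\simeq 0$ then $\res^G_HX\simeq0$ for all $H\in\cat F$, so $X\simeq 0$ by \cref{hyp-functor}(c).

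For the counit, it suffices, by conservativity and since $p^*$ is exact, to show that $p^*p_!p^*X\to p^*X$ is an equivalence. Here we use base change from \cref{rem-G-presentable}(b'), applied to the pullback square
\[
\begin{tikzcd} E\times E \arrow[r,"\pi_2"]\arrow[d,"\pi_1"'] & E\arrow[d,"p"]\\ E\arrow[r,"p"] & *\end{tikzcd}
\]
This gives $p^*p_!\simeq \pi_{1!}\pi_2^*$. The key geometric input is that $\pi_1\colon E\times E\to E$ is an equivalence of $G$-spaces. Indeed, $E\times E$ evaluated at $G/H$ is $*\times *$ when $H\in \cat F$ and empty otherwise, and for this we only need that $\cat F$ is closed under conjugation, so that $E$ is a well-defined presheaf. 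Thus $\pi_1$ has inverse the diagonal $\Delta$, and $\pi_{1!}\simeq\Delta^*$. Consequently $p^*p_!p^*\simeq \Delta^*\pi_2^*p^*\simeq p^*$. The remaining point is that this identification is compatible with the counit; this follows from the standard Beck--Chevalley formalism. The same computation, $p^*p_!Y\simeq \Delta^*\pi_2^*Y\simeq Y$, identifies the unit $Y\to p^*p_!Y$ with an equivalence, which gives essential surjectivity.

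The main obstacle is the careful identification of the unit and counit with the base-change maps. Concretely, we must check that the Beck--Chevalley transformation for the square above, composed with $\Delta^*$, recovers exactly the unit (respectively counit), rather than merely producing some abstract equivalence $p^*p_!\simeq \mathrm{id}$. We would handle this with the triangle identities for $\Delta^*\dashv\pi_1^*$. Alternatively, we could avoid it by invoking the general statement in \cite{CLL23} that $p^*$ is fully faithful whenever $p$ is a $(-1)$-truncated map of $G$-spaces, i.e.\ whenever $\Delta$ is an equivalence.

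For the symmetric monoidal claim, recall that the forgetful functor $\CAlg(\PrL)\to\PrL$ preserves limits. So if $\cat C$ lifts to $\CAlg(\PrL)$, the limit and the comparison functor $p^*$ lift automatically. An equivalence of underlying categories that is symmetric monoidal is then a symmetric monoidal equivalence.
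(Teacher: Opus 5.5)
Your route differs from the paper's, and for a \emph{family} $\cat F$ it is essentially correct. The paper covers by $Y=\coprod_{(H)\in\cat F}G/H$, which is not subterminal. It passes to the coproduct completion $\Set_{G,\cat F}$, proves that the \v{C}ech nerve $Y^{\times\bullet+1}$ is cofinal there (\cref{lem2}), and applies Lurie's comonadic Barr--Beck to $\res\colon\cat C(G)\to\cat C(Y)$, checking left adjointability of every square $\cat C(Y^{m+1})\to\cat C(Y^{m+2})$. You cover by $E\to *$ instead. Since $E\times E\simeq E$, its \v{C}ech nerve is constant, so Barr--Beck degenerates: despite your first sentence, you never use it, and one base-change square plus conservativity suffices. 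Your version avoids the cofinality lemma and the simplicial Beck--Chevalley bookkeeping, and it isolates the single geometric input, namely idempotence of $E$. The paper's version only manipulates orbits and finite products of $G$-sets, and it yields the $\mathrm{Tot}$ presentation.

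Your claim that closure under conjugation suffices is wrong. $E$ (value $*$ on $\cat F$, $\emptyset$ elsewhere) is a presheaf only if $\cat F$ is closed under subconjugation: a map $G/K\to G/H$ with $H\in\cat F\not\ni K$ would require a map $*\to\emptyset$. Only under such closure is $\colim_{\Orb_{G,\cat F}}G/H$ subterminal, which is exactly what makes $\pi_1\colon E\times E\to E$ an equivalence. No argument can repair this for arbitrary collections, because the statement then fails. Take $G=C_2\times C_2$, $\cat F=\{H_1,H_2\}$ two distinct subgroups of order $2$, and $\cat C=\mathbf{D}(k-)$ with $k\neq 0$. Each $\res^G_{H_i}$ is conservative. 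But $\Orb_{G,\cat F}\simeq B(G/H_1)\sqcup B(G/H_2)$, and each factor of the limit is $\mathbf{D}(kG)$, since $\colim_{B(G/H_i)}G/H_i$ is the universal space for the family $\{1,H_i\}$. So the limit is $\mathbf{D}(kG)\times\mathbf{D}(kG)$ and the comparison functor is the diagonal. The paper's proof tacitly needs a similar hypothesis: in \cref{lem2}, $Y\times Y$ must lie in $\Set_{G,\cat F}$, i.e.\ $\cat F$ must be closed under intersections of conjugates. You should therefore state your argument for families. For intersection-closed collections it still runs once you check that each comma category $G/K\downarrow\Orb_{G,\cat F}$ is empty or has binary products, so that the colimit is the universal space of the family generated by $\cat F$.

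You also do not need to match the base-change equivalence with the unit. If $L\dashv R$ and there is \emph{any} natural equivalence $RL\simeq\id$, then the unit is an equivalence; this is a classical lemma, checkable on homotopy categories. Applied to $p_!\dashv p^*$, this gives the unit. The triangle identity $p^*\epsilon\circ\eta p^*=\id_{p^*}$ together with conservativity of $p^*$ then gives the counit. Your alternative via \cite{CLL23} is misstated: for $(-1)$-truncated $p$ it is $p_!$ that is fully faithful. Full faithfulness of $p^*$ is exactly where hypothesis (c) enters; for $\cat F=\emptyset$, for instance, $\cat C(E)=0$. The symmetric monoidal step is fine as written.
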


We will split this result in several steps. 
\begin{Lem}\label{lem1}
     Write $\Set_{G, \cat F}$ for the coproduct completion of $\Orb_{G,\cat F}$ and consider the canonical functor $i \colon \Orb_{G,\cat F}^{\opname} \to \Set_{G,\cat F}^{\opname}$. Restriction along $i$ induces an equivalence
    \[
    \lim_{X \in \Set_{G,\cat F}^{\opname}} \cat C(X) \simeq \lim_{G/H \in \Orb_{G,\cat F}^{\opname}} \cat C(H).
    \]
\end{Lem}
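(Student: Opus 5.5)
The plan is to use the fact that $\cat C$ was extended in \cref{nota-C-orbG} to a limit-preserving functor $\cat S_G^{\opname}\to\PrL$. The value $\cat C(X)$ on a finite or infinite $G$-set $X=\coprod_j G/H_j$ with all $H_j\in\cat F$ is therefore the product $\prod_j \cat C(H_j)$, since $X$ is the coproduct of its orbits in $\cat S_G$. In other words, the restriction of $\cat C$ to $\Set_{G,\cat F}^{\opname}$ is the right Kan extension of its restriction along $i$. Concretely, I will show that for every $X\in\Set_{G,\cat F}$ the canonical map
\[
\cat C(X)\to \lim_{(G/H\to X)\in (\Orb_{G,\cat F})_{/X}^{\opname}} \cat C(H)
\]
is an equivalence. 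The indexing category $(\Orb_{G,\cat F})_{/X}$ decomposes as a disjoint union, over the orbits $G/H_j$ of $X$, of the slices $(\Orb_{G,\cat F})_{/G/H_j}$, because any map from an orbit to $X$ lands in exactly one orbit. Each of these slices has a terminal object, namely the identity of $G/H_j$. Hence the limit is $\prod_j \cat C(H_j)$, and it agrees with $\cat C(X)$ by the coproduct-to-product property above.

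Given this, the rest is a formal fact about Kan extensions. If $F\colon \Set_{G,\cat F}^{\opname}\to\PrL$ is the right Kan extension of $F\circ i$ along $i$, then $\lim F\simeq \lim F\circ i$. This holds because limits of right Kan extensions are computed by the original diagram (\cite[Lemma 4.3.2.7]{HTT}-type statement, or directly: $\lim$ is right Kan extension to the point, and right Kan extensions compose). The map inducing this equivalence is exactly restriction along $i$, which gives the claim.

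The only point needing care is that $i$ is fully faithful. This is clear since $\Set_{G,\cat F}$ is the coproduct completion of $\Orb_{G,\cat F}$, so the Kan extension condition only needs to be checked on objects not in the image, and the computation above covers all objects anyway. The step I expect to be the (mild) obstacle is making precise that the limit-preserving extension from \cref{nota-C-orbG} really sends coproducts of orbits in $\cat S_G$ to products in $\PrL$. This holds because coproducts in the presheaf category $\cat S_G$ are colimits, and the extension turns colimits into limits. One also has to check that $\Set_{G,\cat F}$ embeds into $\cat S_G$ compatibly with these coproducts.
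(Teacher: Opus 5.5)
Your proposal is correct and is essentially the paper's argument: the paper invokes the universal property of the coproduct completion, $\Fun^{\prod}(\Set_{G,\cat F}^{\opname},\PrL)\simeq\Fun(\Orb_{G,\cat F}^{\opname},\PrL)$, and immediately concludes that the map on limits is an equivalence. You make explicit the step the paper leaves implicit. You identify the product-preserving extension as the right Kan extension along $i$ via the slice decomposition, and then use that limits of right Kan extensions agree with limits of the original diagram.
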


\begin{proof}
By definition of coproduct completion, restriction along $i$ induces an equivalence
\[
\Fun^{\prod}(\Set_{G,\cat F}^{\opname}, \PrL)\xrightarrow{\sim}\Fun(\Orb_{G,\cat F}^{\opname}, \PrL)
\]
It then follows that the map on limits is an equivalence.
\end{proof}

 We now describe this limit as a totalization. 

 \begin{Lem}\label{lem2}
     Fix a set of representatives of $G$-conjugacy classes of subgroups of $\cat F$, and set 
     \[
     Y= \coprod_{(H) \in \cat F} G/H \in \Set_{G,\cat F}.
     \]
     The simplicial object  $Y^{\times \bullet +1}\colon \Delta^{\opname}\to\Set_{G,\cat F}$ is cofinal. In particular, there is an equivalence
     \[
     \lim_{X \in \Set_{G,\cat F}^{\opname}} \cat C(X)\simeq \mathrm{Tot}( \cat C(Y^{\bullet+1})).
     \]
 \end{Lem}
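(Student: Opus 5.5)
The cofinality claim reduces, via Quillen's Theorem A, to a contractibility statement about comma categories; the equivalence of limits then follows formally.

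Cofinality here means that the functor $Y^{\times\bullet+1}\colon \Delta^{\opname}\to \Set_{G,\cat F}$ is cofinal in the sense of \cite[Theorem 4.1.3.1]{HTT}, so that limits of functors on $\Set_{G,\cat F}^{\opname}$ may be computed after restriction to $\Delta$. By Quillen's Theorem A in its $\infty$-categorical form, it suffices to show that for every $X\in \Set_{G,\cat F}$ the $\infty$-category
\[
\Delta^{\opname}\times_{\Set_{G,\cat F}} (\Set_{G,\cat F})_{X/}
\]
is weakly contractible. An object of it is a pair $([n], X\to Y^{\times n+1})$.

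Because products commute with mapping in, this category is the category of simplices of the simplicial space (in fact simplicial set, up to the groupoidal structure of $\Orb$) $[n]\mapsto \Map(X, Y^{\times n+1}) \simeq \Map(X,Y)^{\times n+1}$. That simplicial object is the Čech nerve of the map $\Map(X,Y)\to \ast$. Its geometric realization is therefore contractible as soon as $\Map(X,Y)$ is nonempty, since the Čech nerve of a map with a section has an extra degeneracy. Weak contractibility of the category of simplices is equivalent to contractibility of this realization.

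The remaining input is that $\Map(X,Y)\neq\emptyset$ for every $X\in\Set_{G,\cat F}$. Writing $X$ as a coproduct of orbits $G/K$ with $K\in\cat F$, we have $\Map(X,Y)=\prod \Map(G/K,Y)$. For each such $K$, the subgroup $K$ is conjugate to a chosen representative $H$, so there is a $G$-map $G/K\to G/H\hookrightarrow Y$. A product of nonempty spaces is nonempty, but one should check the case where $X$ is an infinite coproduct; $\Set_{G,\cat F}$ is the coproduct completion, so it contains arbitrary coproducts, and choosing a map on each summand gives the map, using the axiom of choice. We also need $Y$ itself in $\Set_{G,\cat F}$, which holds because it is a coproduct of orbits with isotropy in $\cat F$, and we need $\Set_{G,\cat F}$ to have finite products. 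This last point is the only real subtlety: $G/H\times G/K$ decomposes via double cosets into orbits $G/(H\cap gKg^{-1})$, which lie in $\cat F$ only if $\cat F$ is closed under intersections. In our applications $\cat F$ is a family, hence closed under subgroups, so this holds. Otherwise I would interpret the products in $\cat S_G$ and note that the cofinality argument above only uses mapping spaces, so it goes through in $\cat S_G$ as well.

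Finally, the displayed equivalence follows by combining cofinality with the fact that $\cat C$, extended to $\Set_{G,\cat F}^{\opname}$, is a functor into $\PrL$. The limit over $\Set_{G,\cat F}^{\opname}$ is thus computed by the cosimplicial diagram $\cat C(Y^{\bullet+1})$, which is by definition $\mathrm{Tot}$. I expect the main obstacle to be stating Theorem A precisely for the variance used here, since we are dealing with functors out of $\Delta^{\opname}$ into $\Set$ and limits over the opposite, rather than the contractibility itself.
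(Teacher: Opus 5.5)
Your argument is correct and is essentially the paper's: the paper observes that every $X\in\Set_{G,\cat F}$ maps to $Y$ and invokes \cite[Proposition 6.28]{MNN2017}, and your Theorem~A reduction to the realization of the \v{C}ech nerve $\Map(X,Y)^{\times\bullet+1}$ of a nonempty space is exactly the content of that cofinality statement. Your caveat that $Y^{\times n+1}$ lies in $\Set_{G,\cat F}$ only when $\cat F$ is closed under intersections of conjugates is a genuine hidden hypothesis of the lemma, but drop the fallback of forming the products in $\cat S_G$: the resulting $\mathrm{Tot}$ is then a limit over a larger diagram and can differ from $\lim_{\Set_{G,\cat F}^{\opname}}\cat C$, so the closure hypothesis is really needed (for $G=C_2\times C_2$, $\cat F=\{C_2\times 1,\,1\times C_2\}$ and $\cat C=\mathbf{D}(k-)$, the category $\Orb_{G,\cat F}$ is a disjoint union of two copies of $BC_2$ and the limit is $\mathbf{D}(kG)\times\mathbf{D}(kG)$, while the $\mathrm{Tot}$ is $\mathbf{D}(kG)$).
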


 \begin{proof}
     Note that any $X \in \Set_{G,\cat F}$ admits a map $X \to Y$. Then apply \cite[Proposition 6.28]{MNN2017}.
 \end{proof}

\begin{Cons}\label{cons-left-adjointable}
 We observe that there is a canonical functor
 \[
 \res\colon \cat C(G) \to \prod_{(H) \in \cat F} \cat C(H)\eqqcolon \cat C(Y) 
 \] 
 which is induced by the restriction functors. The functor $\res$:
 \begin{enumerate}
     \item is conservative; this follows from \cref{hyp-functor}(c).
     \item it admits a left and right adjoint: this follows from \cite[Theorem B and Corollary 1.3]{HY} and the fact that the restriction functors $\res^G_H$ are assumed to have both a left and right adjoint.
     \item It defines an augmentation for the cosimplicial object $\cat C(Y^{\bullet +1})$, meaning that there is a commutative diagram 
 \[
 \begin{tikzcd}
     \cat C(G) \arrow[r,"\res"] \arrow[d,"\res"'] & \cat C(Y) \arrow[d,"d^1"]\\
     \cat C(Y) \arrow[r,"d^0"] & \cat C(Y^{\times 2 })
 \end{tikzcd}
 \]
 which follows from the functoriality of $\cat C$.
 \end{enumerate} 
 \end{Cons}

\begin{Prop}\label{thm-stmod-lim}
     The functor induced by $\res$ 
     \[
     \cat C(G)\xrightarrow{\sim} \mathrm{Tot}(\cat C(Y^{\bullet+1}))
     \]
     is an equivalence. 
 \end{Prop}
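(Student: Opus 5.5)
We will prove the equivalence with the comonadic form of the Barr–Beck–Lurie theorem (\cite[Corollary 4.7.5.3]{HA}) for the augmented cosimplicial object $\cat C(G)\to \cat C(Y^{\bullet+1})$ of \cref{cons-left-adjointable}. The criterion has two hypotheses. First, $\res\colon \cat C(G)\to \cat C(Y)$ must be conservative and preserve the relevant totalizations. Second, the augmented cosimplicial object must satisfy the Beck–Chevalley condition: every coface square of the form $\cat C(Y^{\times n})\to \cat C(Y^{\times n+1})$, together with the augmentation square, must be horizontally left adjointable.

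\emph{Conservativity and limits.} Conservativity is \cref{cons-left-adjointable}(a), which comes from \cref{hyp-functor}(c). By \cref{cons-left-adjointable}(b), $\res$ has both a left and a right adjoint. It therefore preserves all limits, and in particular the $\res$-split totalizations needed in \cite[Theorem 4.7.5.2]{HA}. This part needs nothing further.

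\emph{Beck–Chevalley.} Every coface map in $\cat C(Y^{\bullet+1})$ is induced by a projection $Y^{\times n+1}\to Y^{\times n}$ in $\cat S_G$. The augmentation is induced by $Y\to *$, where $*=G/G$. The squares we need are images of squares
\[
\begin{tikzcd}
Y^{\times n+1} \arrow[r]\arrow[d] & Y^{\times n}\arrow[d]\\
Y^{\times m+1}\arrow[r] & Y^{\times m}
\end{tikzcd}
\]
and these are pullback squares in $\cat S_G$ because they are products with a copy of $Y$. The same holds for the augmentation square: $Y\times Y$ is the pullback of $Y\to *\leftarrow Y$. Note that $Y$ and its powers are disjoint unions of orbits, but their isotropy may lie outside $\cat F$, since intersections of subgroups in $\cat F$ need not be in $\cat F$. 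For this reason we use \cref{rem-G-presentable}(b') instead of (b): left adjointability holds for all pullback squares of $G$-spaces. Recall that $\cat C$ was extended limit-preservingly to $\cat S_G^{\opname}$ in \cref{nota-C-orbG}. Applying (a') and (b'), the induced squares are horizontally left adjointable, and this is exactly the Beck–Chevalley condition. Dually, one can phrase the whole argument via the left adjoints $f_!$ and the monadic version.

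With both hypotheses checked, \cite[Corollary 4.7.5.3]{HA} shows that $\cat C(G)\to \mathrm{Tot}(\cat C(Y^{\bullet+1}))$ is an equivalence. The main obstacle is the Beck–Chevalley step. One must make sure the extension of $\cat C$ to $\cat S_G$ really sends these pullbacks of non-orbit $G$-sets to adjointable squares, which is where \cite{CLL23} is invoked. One must also check that the adjointability orientation (left versus right adjoints) matches the version of Barr–Beck–Lurie being applied. If the orientation is off, we will pass to right adjoints, which exist by \cref{cons-left-adjointable}(b) and presentability, and use the monadic form instead.
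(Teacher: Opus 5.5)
Your proposal is correct and follows the paper's proof. You verify the hypotheses of \cite[Corollary 4.7.5.3]{HA} using conservativity of $\res$, its adjoints, and left adjointability of the $d^0$-squares, which comes from \cref{rem-G-presentable}(b') because the underlying squares of $G$-spaces are pullbacks (your ``product with a copy of $Y$'' argument treats every $\alpha\in\Delta_+$ at once, whereas the paper checks the augmentation, faces and degeneracies separately). The only slip is the orientation: that corollary is the monadic version, pairing left adjointability with $\res$-split \emph{geometric realizations} rather than totalizations; $\res$ preserves these because it has a right adjoint and $\cat C(G)$ is presentable, so no switch of adjoints is needed.
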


 \begin{proof}
 We will verify the conditions of \cite[Corollary 4.7.5.3]{HA}. We have already seen that $\res$ is conservative and that it preserves all colimits. It is also clear that $\cat C(G)$ is presentable so (1) is satisfied. We only need to verify (2), that is for any $\alpha \colon [m] \to [n]$ in $\Delta_+$ the corresponding square
 \[
 \begin{tikzcd}
 \cat C(Y^{m+1}) \arrow[d,"\alpha"'] \arrow[r,"d^0"] & \cat C(Y^{m+2}) \arrow[d,"{[0]\star \alpha}"]\\
 \cat C(Y^{n+1}) \arrow[r,"d^0"] & \cat C(Y^{n+2})
 \end{tikzcd}
 \]
is left adjointable. This in turn will follow from \cref{rem-G-presentable} if we show that the corresponding square of $G$-spaces
\[
\begin{tikzcd}
 Y^{n+2} \arrow[d,"{([0]\star \alpha)^*}"']\arrow[r,"d_0"] & Y^{n+1} \arrow[d,"{\alpha^*}"] \\
Y^{m+2} \arrow[r,"d_0"] & Y^{m+1}.
\end{tikzcd}
\]
is a pullback. We can split this final claim, in two cases: if $\alpha$ is the unique map $[-1] \to [0]$ or if $\alpha$ is a map in $\Delta$. 

In the first case, we need to check that the square of $G$-spaces
\[
\begin{tikzcd}
Y^{\times 2} \arrow[r, "d_0"] \arrow[d,"d_1"'] & Y \arrow[d]\\
Y \arrow[r] & G/G
\end{tikzcd}
\]
is a pullback, which is clear. 

For the second case, we just need to check it for $\alpha$ a face $d^i$ and degeneracy $s^j$. In this case $[0]\star d^i=d^{i+1}$ and $[0]\star s^j=s^{j+1}$. The map induced by $d^i$ is projection onto all factors but avoiding $i+1$ and the map induced by $s^j$ is given by doubling the $j+1$ factor via a diagonal. Now is routine to check that the above square is a pullback.
 \end{proof}
 We have now all the ingredients to prove the main result of this section.

\begin{proof}[Proof of \cref{thm-descent}]
    For the first claim combine \cref{lem1}. \cref{lem2} and \cref{thm-stmod-lim}. For the second one, we note that if $\cat C$ lifts to a functor in $\CAlg(\PrL)$, then the restriction functors are automatically symmetric monoidal. 
\end{proof}

\subsection{Examples}\label{sec-descent-ex}
As an immediate consequence of \cref{thm-descent}, we recover the following result, which is by now folklore. Consider the functor from \cref{cor-funct-der}. 

\begin{Thm}\label{thm-descent-derivedcat}
Let $k$ be a commutative ring and let $G$ be a discrete group. Then there is a symmetric monoidal equivalence:
    \[
    \mathbf{D}(kG)\simeq \mathrm{Fun}(BG,\mathbf{D}(k))
    \] 
where the monoidal structure on the right-hand side is given pointwise.     
\end{Thm}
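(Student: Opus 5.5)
We apply \cref{thm-descent} to the functor $\mathbf{D}(k-)\colon \Orb_G^{\opname}\to\CAlg(\PrL)$ from \cref{cor-funct-der}, taking $\cat F=\{1\}$, the collection consisting only of the trivial subgroup. This collection is trivially closed under conjugation. Two things then have to be checked: that \cref{hyp-functor} holds, and that the resulting limit is $\Fun(BG,\mathbf{D}(k))$.

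\textbf{Step 1: hypotheses (a) and (c).} For a $G$-map $G/K\to G/H$, the functor $f^*$ is restriction along an injective homomorphism $K\to H$, possibly composed with a conjugation. Restriction is exact on modules, so it passes to derived categories. Its left adjoint is induction $kH\otimes_{kK}-$, which is also exact because $kH$ is free over $kK$; hence it descends to $\mathbf{D}$ as a left adjoint. For (c), note that $\res^G_1\colon \mathbf{D}(kG)\to\mathbf{D}(k)$ is conservative, since quasi-isomorphisms are detected on underlying complexes of $k$-modules.

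\textbf{Step 2: hypothesis (b), the Mackey formula.} This is the step I expect to need the most care, although it is standard. Given a pullback of orbits, the pullback $X$ is the finite or infinite disjoint union $\coprod_{HgL} G/(H\cap {}^gL)$ over the double cosets inside $K$. We must show that the Beck--Chevalley map
\[
\ind\res\Rightarrow \res\ind
\]
is an equivalence. Both sides are exact functors induced from underived functors on modules. At the module level, the classical Mackey isomorphism
\[
\res^K_H\ind^K_L M\cong\bigoplus_{HgL}\ind^H_{H\cap{}^gL}\,c_g^*\res^L_{H^g\cap L}M
\]
holds naturally, and it holds for arbitrary direct sums. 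Exactness of every functor involved means it passes to $\mathbf{D}$. The remaining point is compatibility: one has to confirm that the isomorphism equals the canonical mate transformation. This amounts to a check on generators, using that $\mathbf{D}$ sends coproducts of orbits to products.

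\textbf{Step 3: identifying the limit.} \cref{thm-descent} now gives a symmetric monoidal equivalence
\[
\mathbf{D}(kG)\simeq\lim_{\Orb_{G,\{1\}}^{\opname}}\mathbf{D}(k-).
\]
The category $\Orb_{G,\{1\}}$ has a single object $G/1$ with endomorphisms $G$ acting by right multiplication. Via \cref{rem-orbit-2-category}, there are no nontrivial $2$-morphisms because the isotropy is trivial, so this category is $BG$. Under $\kappa$ from \cref{lem-can-functor}, the functor restricted to $BG$ is constant at $\mathbf{D}(k)$: it sends $g$ to $c_g^*$ on the trivial group, which is the identity, and coherence data is trivial. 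A limit of a constant diagram indexed by $BG$ in $\CAlg(\PrL)$ is the cotensor $\Fun(BG,\mathbf{D}(k))$, and its symmetric monoidal structure is pointwise, because limits in $\CAlg(\PrL)$ are computed underlying.
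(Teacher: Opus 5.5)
Your proposal is correct and follows the paper's proof. You apply \cref{thm-descent} to $\mathbf{D}(k-)$ with $\cat F=\{1\}$, check that induction is an exact left adjoint satisfying the Mackey formula and that $\res^G_1$ is conservative, and then identify the limit over $\Orb_{G,\{1\}}^{\opname}\simeq BG$ of the constant diagram at $\mathbf{D}(k)$ with $\Fun(BG,\mathbf{D}(k))$, carrying the pointwise tensor product. The only difference is that you sketch the Beck--Chevalley verification and explain why $\kappa$ is trivial on $BG$, whereas the paper simply cites the Mackey formula from the literature.
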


\begin{proof} 
 It is well-known that the restriction functor admits a left adjoint which satisfy the Mackey formula, see for example \cite[Lemma 3.2]{MPB06}. Note that also that the restriction functor to the trivial group $\res^G_1$ is clearly conservative, hence  the functor $\mathbf{D}(k-)$ satisfies \cref{hyp-functor} for the family $\cat F=\{1\}$. By \cref{thm-descent} we obtain a symmetric monoidal equivalence    
 \[
 \mathbf{D}(kG) \simeq \lim_{G/H \in \Orb_{G,\{1\}}^{\opname}} \mathbf{D}(k)\simeq \lim_{BG}\mathbf{D}(k)= \mathrm{Fun}(BG, \mathbf{D}(k))
 \]
which gives us the desired result.
\end{proof}

We need some preparations in order to provide further applications of the main theorem of this section.  

\begin{Prop}\label{prop-detection-abelem}
    Let $k$ be as in \cref{hyp} and $G$ be a finite group. Let $\mathcal{E}(G)$ denote the family of elementary abelian subgroups of $G$. Then both collections  
    \[
    \{\res^G_H\colon \mathbf{K}(\mathrm{Proj}(kG))\to \mathbf{K}(\mathrm{Proj}(kE))\mid E\in \mathcal{E}(G)\}
    \] 
    and  
     \[
     \{\res^G_H\colon \mathbf{StMod}(kG)\to \mathbf{StMod}(kE)\mid E\in \mathcal{E}(G)\}
     \]
     are jointly conservative. Moreover, if $k$ is a field of positive characteristic $p$, then we can replace $\cat E(G)$ by the collection of elementary abelian $p$-subgroups of $G$. 
\end{Prop}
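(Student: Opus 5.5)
The plan is to reduce both detection statements to the stable module category, and then reduce that to the classical detection of projectivity on elementary abelian subgroups after base change to residue fields of $k$.

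\emph{Step 1: reducing $\mathbf{K}(\Proj(kG))$ to $\mathbf{StMod}(kG)$.} The trivial subgroup is elementary abelian, so we may argue exactly as in \cref{prop-kproj-conservativity}. If $\res^G_E X\simeq 0$ for all $E\in\mathcal E(G)$, then in particular $\res^G_1X\simeq 0$. Hence $X$ is acyclic and lies in $\mathbf{Ac}(\Proj(kG))\simeq\mathbf{StMod}(kG)$ by \cref{rem-ac=stmod}. Restrictions are compatible with this identification, so it suffices to prove the claim for $\mathbf{StMod}$.

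\emph{Step 2: the stable module category.} Since $G$ is finite, \cref{ex-gorestein} identifies $\mathbf{StMod}(kG)$ with the stable category of $k$-projective $kG$-modules. Let $M$ be such a module with $\res^G_E M$ projective over $kE$ for every $E\in\mathcal E(G)$; we must show that $M$ is $kG$-projective.

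We first reduce to the case where $k$ is a field. A $k$-projective $kG$-module $M$ is $kG$-projective if and only if it has finite projective dimension, because $\mathrm{Ext}^{>0}_{kG}(M,P)=0$ for projective $P$ by Gorenstein projectivity, and $k$ has finite global dimension. By a Chouinard-type criterion (Benson–Iyengar–Krause–Pevtsova, or Chouinard's original argument extended to Noetherian coefficients), $M$ is projective over $kG$ if and only if $\kappa(\mathfrak p)\otimes_k M$ is projective over $\kappa(\mathfrak p)G$ for every prime $\mathfrak p$ of $k$. To apply this, we localize at $\mathfrak p$ and use the regularity of $k$ (\cref{rem-regular-rings}): a Koszul complex on a regular system of parameters lets us pass between $k_{\mathfrak p}$ and $\kappa(\mathfrak p)$. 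Restriction commutes with base change, so the hypothesis descends to each residue field.

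\emph{Step 3: the field case.} Over a field $\kappa$ of characteristic $p>0$, we apply Chouinard's theorem, which states that a $\kappa G$-module is projective if and only if its restriction to every elementary abelian $p$-subgroup is projective. This theorem holds for arbitrary, infinitely generated, modules. This also gives the "moreover" clause. Over a field of characteristic $0$ every module is projective, so there is nothing to prove.

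\emph{Main obstacle.} The delicate point is Step 2: the passage from residue fields back to $k$ for infinitely generated modules. Our first attempt would be to cite the projectivity detection by residue fields for $k$-projective modules over group algebras of finite groups with Noetherian $k$ (as in work of Benson–Iyengar–Krause–Pevtsova on stratification over regular rings). Failing that, we would use the local-to-global principle in $\mathbf{StMod}(kG)$, viewed as $\mathbf{D}(k)$-linear, together with stratification by $\Spec k$ and the finite global dimension of $k$.
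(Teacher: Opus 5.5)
Your Step 1 matches the paper exactly. It reduces the $\mathbf{K}(\Proj(kG))$ statement to $\mathbf{StMod}(kG)$ using the trivial subgroup and the identification $\mathbf{Ac}(\Proj(kG))\simeq\mathbf{StMod}(kG)$. After that the two routes diverge.

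The paper never passes through residue fields. It observes that a Gorenstein projective module vanishes in $\mathbf{StMod}(kG)$ if and only if it is projective. It then applies Chouinard's theorem directly over $k$. Chouinard's theorem, in the form the paper relies on, detects projectivity of $RG$-modules over an arbitrary commutative ring $R$, including infinitely generated ones. So no hypothesis on $k$ is used at the detection step; regularity enters only in identifying $\mathbf{StMod}(kG)$ with a stable category of $k$-projective modules. The ``moreover'' clause then follows because $kE$ is semisimple when $p\nmid |E|$, so those subgroups impose no condition.

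Your route needs only the field case of Chouinard, which also gives the ``moreover'' clause directly. The price is a fibrewise detection theorem: an infinitely generated lattice over a regular ring is projective once all its fibres $\kappa(\mathfrak p)\otimes_k M$ are. That is a substantially deeper input than the statement requires, of the kind proved via local-to-global principles in the work of Barthel--Benson--Iyengar--Krause--Pevtsova.

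Be careful with how you justify that step. The Koszul-complex remark only gives the easy direction: projectivity of $M$ implies projectivity of $\kappa(\mathfrak p)\otimes_k M$. The converse, which is the one you actually need, is exactly where the local-to-global principle over $\Spec k$ (using finite Krull dimension) comes in. In a write-up you should cite a precise fibrewise detection result rather than the Koszul argument.

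With that citation your argument is valid, but it is more roundabout than the paper's one-line appeal to Chouinard over general coefficients.
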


\begin{proof}  
  Note that it is enough to verify the claim for this second collection of functors, since we may identify the stable category $\mathbf{StMod}(kG)$ with the subcategory $\mathbf{Ac}(\Proj(kG))$ of $\mathbf{K}(\Proj(kG))$, exactly as in \cref{prop-kproj-conservativity}. Recall that the homotopy category $\mathbf{StMod}(kG)$ coincides with the stable category of the Frobenius category $\mathrm{GP}(kG)$. In particular, a Gorenstein projective module is trivial in $\mathbf{StMod}(kG)$ if and only if it is projective. On the other hand, we can invoke Chouinard's theorem \cite{Chouinard} which tells us that projectivity of a $kG$-module is detected via restriction to all elementary abelian subgroups.

If $k$ is a field of positive characteristic $p$, we use the fact that the group algebra $kE$ is semisimple whenever $|E|$ is not divisible by $p$. Hence the only subgroups in $\cat E(G)$ that contribute non-trivially are the $p$-subgroups.
\end{proof}

\begin{Thm}\label{thm-descent-stmod}
Let $k$ be as in \cref{hyp}, let $G$ be a group of type $\mathrm{H}\Phi_k$, and let $\cat F$ be a collection of subgroups of $G$ containing the elementary abelian subgroups.  Then there is a symmetric monoidal equivalence:
    \[
    \mathbf{StMod}(kG)\simeq \lim_{G/H \in \Orb_{G,\cat F}^{\opname}} \mathbf{StMod}(kH).
    \]
\end{Thm}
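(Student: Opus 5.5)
We apply \cref{thm-descent} to the functor $\mathbf{StMod}(k-)\colon \Orb_G^{\opname}\to\CAlg(\PrL)$ of \cref{cor-funct-stmod}, with the collection $\cat F$. This requires verifying the three conditions of \cref{hyp-functor}. The symmetric monoidal part of the conclusion then comes for free from the second half of \cref{thm-descent}, since the functor lifts to $\CAlg(\PrL)$.

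Conditions (a) and (b) are the routine part. For (a), every map $G/K\to G/H$ in $\Orb_G$ factors as an isomorphism (given by conjugation $c_g$) followed by a map induced by an inclusion. For inclusions, \cref{prop-induction-stmod} supplies the left adjoint $\ind$. Note that subgroups of an $\mathrm{H}\Phi_k$-group are again $\mathrm{H}\Phi_k$, by restricting the action on the same complexes inductively. Conjugations act by equivalences, which trivially have adjoints.

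For (b), the Mackey formula is first checked at the level of the abelian categories $\Mod(kH)$. For a pullback $X=G/H\times_{G/K}G/L$, which decomposes as the coproduct $\coprod_{HgL} G/(H\cap {}^gL)$, the classical natural isomorphism
\[
\res^K_H\ind_L^K\cong\bigoplus_{HgL\subseteq K}\ind^H_{H\cap {}^gL}\,c_g^*\,\res^L_{H^g\cap L}
\]
is exactly the Beck--Chevalley map. All functors involved are exact, preserve projectives, and preserve Gorenstein projectives (as shown in \cref{prop-induction-stmod}). Hence they are left Quillen and already preserve weak equivalences between cofibrant objects. The isomorphism therefore descends to the $\infty$-categorical localizations, giving horizontal left adjointability. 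Alternatively, one can work in the model $\mathbf{Ac}(\Proj(k-))$ of \cref{rem-ac=stmod}, where the functors are applied degreewise and no deriving is needed.

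The main step is (c), joint conservativity of $\{\res^G_H\}_{H\in\cat F}$. By \cref{prop-conservativity for stab}, restriction to all finite subgroups of $G$ is jointly conservative. For a finite $H\subseteq G$, \cref{prop-detection-abelem} shows that restriction $\mathbf{StMod}(kH)\to\mathbf{StMod}(kE)$ to elementary abelian $E\subseteq H$ is jointly conservative. This uses that $\mathbf{StMod}(kH)$ for finite $H$ is the stable category of $k$-projective modules modulo projectives, together with Chouinard's theorem. Composing, $M\in\mathbf{StMod}(kG)$ vanishes if $\res^G_EM\simeq0$ for all elementary abelian $E\subseteq G$. Since $\cat F$ contains all of these, condition (c) holds.

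The point requiring the most care is the compatibility of restrictions through the intermediate finite subgroup, $\res^H_E\res^G_H\simeq\res^G_E$, together with the fact that "trivial in $\mathbf{StMod}$" means the same thing for finite $H$ via \cref{rem-homotopycat of stab}. Both follow from the functoriality in \cref{cor-funct-stmod}. With (a)--(c) established, \cref{thm-descent} yields the stated symmetric monoidal equivalence.
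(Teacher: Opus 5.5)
Your proposal is correct and follows the paper's proof. Both apply \cref{thm-descent} to the functor of \cref{cor-funct-stmod}, take the left adjoints from \cref{prop-induction-stmod}, deduce the Mackey formula from the module-level one via the Quillen adjunctions, and obtain joint conservativity by composing \cref{prop-conservativity for stab} with \cref{prop-detection-abelem}. You additionally spell out details the paper leaves implicit: factoring orbit maps through conjugations, subgroup-closure of $\mathrm{H}\Phi_k$, and the explicit double coset formula.
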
 

\begin{proof}
   The fact that the restriction functor admits a left adjoint is discussed in \cref{prop-induction-stmod}, and the Mackey formula follows again from the case of $kG$-modules (see for instance \cite[Lemma 3.2]{MPB06}) since the restriction-induction functors are Quillen functors on $kG$-modules. Conservativity of the restriction functors follows by combining \cref{prop-detection-abelem} and \cref{prop-conservativity for stab}.
\end{proof}

For the next result consider the functor from \cref{cor-functor-kproj}.
\begin{Thm}\label{thm-descent-kproj}
Let $k$ be as in \cref{hyp}, $G$ be a group of type $\Phi_k$, and let $\cat F$ be a collection of subgroups of $G$ containing the elementary abelian subgroups. Then there is an equivalence:
    \[
    \mathbf{K}(\Proj(kG))\simeq \lim_{G/H \in \Orb_{G,\cat F}^{\opname}} \mathbf{K}(\Proj(kH)).
    \]
\end{Thm}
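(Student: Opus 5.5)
We apply \cref{thm-descent} to the functor $\mathbf{K}(\Proj(k-))\colon \Orb_G^{\opname}\to\PrL$ of \cref{cor-functor-kproj}, so it suffices to verify the three conditions of \cref{hyp-functor} for the collection $\cat F$. The argument parallels the proof of \cref{thm-descent-stmod}.

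For condition (a), note that a $G$-map $G/K\to G/H$ is, up to the conjugation equivalences built into \cref{cons-funct-Kproj-KMod}, the composite of a conjugation and an inclusion $K\subseteq H$. Conjugations act by equivalences, so it suffices to treat inclusions. For these, \cref{prop-ind-res-kproj-mod} provides the left adjoint $\ind_K^H$, obtained by applying $kH\otimes_{kK}-$ degreewise, since induction preserves projectives.

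For condition (b), the Mackey formula holds already at the level of $\Mod(kG)$, and hence degreewise on chain complexes; see \cite[Lemma 3.2]{MPB06}. Since both induction and restriction preserve complexes of projectives, the Mackey isomorphism in $\mathbf{K}(\Mod(k-))$ restricts to the full subcategories $\mathbf{K}(\Proj(k-))$. The one point requiring care is that the Beck--Chevalley transformation of the square of $\infty$-categories is exactly the classical Mackey map. This follows because the functoriality is obtained from the strict $(2,1)$-functor on $\Mod(k-)$ by passing to the dg-nerve, so adjointability can be checked on homotopy categories, where it is the double coset formula. For a pullback $X$ of orbits, $\cat C(X)$ is a product over the double coset orbits, which matches the usual decomposition.

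Condition (c) is where the real work lies. We must show that $\{\res^G_E\}_{E\in\cat F}$ is jointly conservative, and it suffices to do this for elementary abelian $E$. Let $M\in\mathbf{K}(\Proj(kG))$ with $\res^G_E M\simeq 0$ for every elementary abelian $E\subseteq G$. For each finite subgroup $H\subseteq G$, the elementary abelian subgroups of $H$ are among these $E$, so \cref{prop-detection-abelem} applied to the finite group $H$ gives $\res^G_H M\simeq 0$. Then \cref{prop-kproj-conservativity} yields $M\simeq 0$, using that $G$ is of type $\Phi_k$. Note also that $\cat F$ is assumed to contain the elementary abelian subgroups and to be closed under conjugation, so these restrictions are among those indexed by $\cat F$.

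With all three conditions verified, \cref{thm-descent} gives the stated equivalence. I expect the only subtle step to be the identification of the adjointability in (b) with the classical Mackey formula. This reduces to the corresponding statement for $\mathbf{K}(\Mod(k-))$, since restricting to full subcategories closed under both adjoints preserves left adjointability.
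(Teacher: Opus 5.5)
Your proposal is correct and follows the paper's proof. You verify \cref{hyp-functor} for $\mathbf{K}(\Proj(k-))$ via \cref{prop-ind-res-kproj-mod} (left adjoint), the module-level Mackey formula (adjointability), and joint conservativity by applying \cref{prop-detection-abelem} to each finite subgroup and then \cref{prop-kproj-conservativity}, before invoking \cref{thm-descent}; your extra details, such as reducing $G$-maps to inclusions, the identity $\res^H_E\res^G_H\simeq\res^G_E$, and checking Beck--Chevalley on homotopy categories, only make explicit what the paper leaves implicit.
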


\begin{proof}
   The fact that the restriction functor admits a left adjoint is discussed in \cref{prop-ind-res-kproj-mod}, and the Mackey formula follows again from the case of $kG$-modules. Conservativity of the restriction functors follows by combining \cref{prop-detection-abelem} and \cref{prop-kproj-conservativity}.
\end{proof}

\begin{Rem}
    Let $k$ be a field of positive characteristic $p$. In light of \cref{prop-detection-abelem}, we may instead restrict attention to families of subgroups $\cat F$ that contain all elementary abelian $p$-subgroups of $G$ in order to obtain descent results analogous to those in \cref{thm-descent-kproj} and \cref{thm-descent-stmod}. In the finite case, this recovers results on the stable module category from \cite{Mat16}, and for groups of type $\Phi_k$ it recovers those of \cite{Gomez24}. We emphasize that the corresponding results for $\mathbf{K}(\mathrm{Proj}(kG))$ appear to be new.
\end{Rem} 

As an application of the previous result, we obtain that  $\mathbf{K}(\mathrm{Proj}(kG))$ is indeed symmetric monoidal. We need some preparations. Recall the functor $\mathbf{t}$ from \cref{recollement for kG}.

\begin{Rec}\label{rec-complete-res}
A \textit{complete resolution} of a $kG$--module $M$ is a morphism of complexes $\mathbf{t}M \to \mathbf{p}M$ such that $\mathbf{t}M$ is a totally acyclic complex of projective $kG$--modules, $\mathbf{p}M$ is a projective resolution of $M$, and the map $\mathbf{t}M \to \mathbf{p}M$ agrees with the identity in sufficiently low degrees. Moreover, complete resolutions are unique up to chain homotopy, and any morphism of projective resolutions induces, up to homotopy, a unique morphism between the corresponding complete resolutions; see \cite[Definition 3.8]{mazza2019stable} and the subsequent discussion.

In particular, if $k$ satisfies \cref{hyp} and $G$ is a group of type $\Phi_k$, then every $kG$--module admits a complete resolution; see \cite[Theorem 3.9]{mazza2019stable}.
\end{Rec}

\begin{Rec}\label{rec-monoidalunit-finite}
    Let $G$ be a finite group and $k$ be as in \cref{hyp}. In this case, the complete resolution $\mathbf{t}k\to \mathbf{p}k$ of the trivial module $k$ corresponds to the Tate resolution $\mathbf{t}k$ of $k$, a projective resolution of $\mathbf{p}k$ of $k$ and the chain map $\mathbf{t}k\to \mathbf{p}k$ is simply the identity map on non-positive degrees. From this, we deduce that the cofiber $\mathbf{i}k$ of $\mathbf{t}k\to \mathbf{p}k$ is precisely an injective resolution of $\mathbf{k}$ in the category of lattices $\mathrm{Mod}(G,k)$ (see \cite[Section 2]{BBIKP}). Moreover, $\mathbf{i}k$ is the monoidal unit of $\mathbf{K}(\mathrm{Proj}(kG))$; see \cite[Lemma 4.2]{BBIKP}.
\end{Rec}

\begin{Cor}\label{cor-kproj-ttcat}
   Let $k$ be as in \cref{hyp} and $G$ be a group of type $\Phi_k$. Then the tensor product $\otimes_k$ induces on  $\mathbf{K}(\Proj(kG))$ a symmetric monoidal structure where the monoidal unit is given by $\mathbf{i}k$, the cofiber of $\mathbf{t}k\to \mathbf{p}k$, the total resolution of the trivial $kG$--module $k$. 
\end{Cor}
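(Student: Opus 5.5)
We would prove the statement by showing that $\mathbf{i}k$ acts as a unit for $\otimes_k$ on $\mathbf{K}(\Proj(kG))$, detecting everything after restriction to finite subgroups. The finite case is \cref{rec-monoidalunit-finite}, and the descent equivalence of \cref{thm-descent-kproj} transports it.

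\textbf{Step 1: $\mathbf{i}k$ is an object of $\mathbf{K}(\Proj(kG))$.} By \cref{rec-complete-res}, $k$ has a complete resolution $\mathbf{t}k\to\mathbf{p}k$. Both complexes consist of projective $kG$-modules, so the cofiber $\mathbf{i}k$ lies in $\mathbf{K}(\Proj(kG))$. Moreover, for any $X\in\mathbf{K}(\Proj(kG))$ the tensor product $X\otimes_k\mathbf{i}k$ again lies in $\mathbf{K}(\Proj(kG))$, since this is a localizing tensor ideal of $\mathbf{K}(\Mod(kG))$.

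\textbf{Step 2: restriction to finite subgroups.} Fix a finite subgroup $H\subseteq G$. Restriction is exact and preserves projectives and totally acyclic complexes (\cref{rem-totally-acy}). It also preserves the condition of agreeing with the identity in low degrees. Hence $\res^G_H(\mathbf{t}k\to\mathbf{p}k)$ is a complete resolution of the trivial $kH$-module. By the uniqueness in \cref{rec-complete-res}, $\res^G_H\mathbf{i}k$ is homotopy equivalent to the unit $\mathbf{i}_Hk$ of $\mathbf{K}(\Proj(kH))$ from \cref{rec-monoidalunit-finite}.

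\textbf{Step 3: construct the unit map and check it is an isomorphism.} Let $X\in\mathbf{K}(\Proj(kG))$. We need a natural map $X\otimes\mathbf{i}k\to X$, or an inverse. The natural candidate is
\[X\otimes\mathbf{p}k\to X\otimes k=X,\]
which is a homotopy equivalence because $X$ is a complex of projectives; one checks this is a standard argument, since $X$ is K-flat-ish degreewise over $k$. This gives a map $X\simeq X\otimes\mathbf{p}k\to X\otimes\mathbf{i}k$, and hence a comparison map $X\to X\otimes\mathbf{i}k$.

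This comparison map is natural in $X$ and compatible with restriction by \cref{prop-res-motimesn}. After applying $\res^G_H$ it becomes the analogous map for $H$. That map is an isomorphism because $\mathbf{i}_Hk$ is the unit of $\mathbf{K}(\Proj(kH))$ and, by \cite[Lemma 4.2]{BBIKP}, the unit isomorphism is exactly of this form, namely $X\otimes\mathbf{t}k$ is acyclic-killed. By the joint conservativity of \cref{prop-kproj-conservativity}, the comparison map is an isomorphism in $\mathbf{K}(\Proj(kG))$.

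\textbf{Step 4: coherence.} The associativity and symmetry constraints are inherited from $\mathbf{K}(\Mod(kG))$, via its dg/$\infty$-categorical enhancement. The unit constraint is the map from Step 3, so we obtain a symmetric monoidal structure. Alternatively, one can note that the limit in \cref{thm-descent-kproj} is a limit of symmetric monoidal categories with symmetric monoidal transition functors. Then $\mathbf{K}(\Proj(kG))$ acquires the limit unit, which Step 2 identifies with $\mathbf{i}k$.

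\textbf{Main obstacle.} We expect the main obstacle to be Step 3: making the comparison map precise and natural. In particular, one has to justify that $X\otimes\mathbf{p}k\to X$ is an equivalence in $\mathbf{K}(\Proj(kG))$ and not merely a quasi-isomorphism. We would try to handle this by tensoring with the contractible-after-restriction cone. Its restrictions to finite $H$ vanish by the finite-group statement \cite[Lemma 4.2]{BBIKP}, and \cref{prop-kproj-conservativity} then concludes.
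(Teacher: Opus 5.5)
\textbf{Step 3 rests on a false claim.} Your Steps 1 and 2 agree with the paper, but Step 3, which you treat as the core of the argument, does not hold.

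The map $X\otimes\mathbf{p}k\to X\otimes k=X$ is \emph{not} an isomorphism in $\mathbf{K}(\Proj(kG))$ in general, already for $G$ finite. Your ``degreewise over $k$'' reasoning only shows that its cone $X\otimes\mathrm{cone}(\mathbf{p}k\to k)$ is contractible over $k$, i.e.\ after restriction to the trivial subgroup.

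Concretely, let $A$ be any acyclic complex of projective $kG$-modules. Then $A\otimes kG\cong\ind_1^G\res^G_1A$ is contractible, because an acyclic complex of projective $k$-modules is contractible when $k$ has finite global dimension. Hence $A\otimes P$ is contractible for every projective $P$. Therefore $A\otimes\mathbf{p}k$, being the homotopy colimit of the $A\otimes\sigma_{\geq -n}\mathbf{p}k$ along degreewise split inclusions, is zero.

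Take $A=X=\mathbf{t}k$ with, say, $G=C_p$ and $k=\mathbb{F}_p$. This gives $X\otimes\mathbf{p}k\simeq 0\neq X$. In other words, $\mathbf{p}k$ is the unit of $\mathbf{D}(kG)$, and tensoring with it annihilates the stable part $\mathbf{Ac}(\Proj(kG))\simeq\mathbf{StMod}(kG)$ of the recollement.

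The second leg fails too. The map $X\otimes\mathbf{p}k\to X\otimes\mathbf{i}k$ has fiber $X\otimes\mathbf{t}k$, which is nonzero, e.g.\ for $G$ finite with $\mathbf{StMod}(kG)\neq 0$ and $X=\mathbf{i}k$. So the zigzag defining your comparison map $X\to X\otimes\mathbf{i}k$ cannot be composed.

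Your suggested repair, checking after restriction to finite subgroups via \cite[Lemma 4.2]{BBIKP}, cannot work either. The counterexample already lives on a finite group; that lemma is about $\mathbf{i}k$, not $\mathbf{p}k$.

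\textbf{What saves the proof.} The second alternative in your Step 4 is precisely the paper's proof, and it makes Step 3 unnecessary. The argument runs as follows:
\begin{enumerate}
\item $\otimes_k$ is a non-unital symmetric monoidal structure on $\mathbf{K}(\Proj(kG))$.
\item Restriction preserves it (\cref{prop-res-motimesn}), so the descent equivalence of \cref{thm-descent-kproj} is non-unital symmetric monoidal.
\item The unit of the limit, namely the compatible family of units of the $\mathbf{K}(\Proj(kH))$, therefore supplies a unit for $\otimes_k$.
\end{enumerate}

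You should make two points explicit there. First, state this compatibility of restriction with $\otimes_k$; otherwise the transported structure need not refine $\otimes_k$. Second, the equivalences $\res^G_H\mathbf{i}k\simeq\mathbf{i}_Hk$ from Step 2 must be the canonical ones, induced by the identity of $k$ via uniqueness of complete resolutions, so that they are compatible with the transition functors. Knowing only that each restriction is abstractly isomorphic to the unit would not identify $\mathbf{i}k$ with the unit of the limit. Pointwise-trivial objects of a limit can be nontrivial twists; compare the $H^1$ term in \cref{thm-ses for Dperm}.
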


\begin{proof} 
Recall from \cref{def-kproj} that $\otimes_k$ induces a non-unital symmetric monoidal structure on $\mathbf{K}(\Proj(kG))$. Since the restriction functors preserve this tensor product (by \cref{prop-res-motimesn}), by the universal property of the limit, we obtain a non-unital symmetric monoidal functor 
\[
 \mathbf{K}(\Proj(kG))\xrightarrow{\sim}\lim_{G/H \in \Orb_{G,\cat F}^{\opname}} \mathbf{K}(\Proj(kH)).
\]
which is an equivalence by \cref{thm-descent-kproj}. Since limits of symmetric monoidal categories is again symmetric monoidal, we deduce that the non-unital symmetric monoidal structure $\otimes_k$ on $\mathbf{K}(\Proj(kG))$ refines to a symmetric monoidal structure. We now verify that the unit object is as claimed. 

Let $H$ be a finite subgroup of $G$ and let $\mathbf{t}_Hk\to \mathbf{p}_Hk$ be a complete resolution of the trivial $kH$--module $k$.  Since the restriction functor is exact and preserves projective $kG$--modules, we deduce that $\res^G_H(\mathbf{t}k\to \mathbf{p}k)$ is a complete resolution of $\res^G_Hk$. On the other hand, note that $\res^G_H k$  is simply the trivial $kH$--module. Hence the identity map $\res^G_Hk$ to $k$ induces a homotopy equivalence between $\res^G_H\mathbf{p}k$ and $\mathbf{p}_H k$, and so it does on complete resolutions (see \cref{rec-complete-res}). It follows that the cofiber of $\res^G_H(\mathbf{t}k\to \mathbf{p}k)$ is canonically homotopy equivalent to the cofiber of $\mathbf{t}_Hk\to \mathbf{p}_Hk$ which is the monoidal unit of $\mathbf{K}(\mathrm{Proj}(kH))$ (see \cref{rec-monoidalunit-finite}).  

In other words, we have verified that for any finite subgroup $H$ of $G$, $\res^G_H\mathbf{i}k$ is canonically isomorphic to the monoidal unit of $\mathbf{K}(\mathrm{Proj}(kH))$ via the induced chain map induced by the identity map from  $\res^G_H k$ to $k$. It follows that $(\res^G_H \mathbf{i}k)_{H}$ agrees with the monoidal unit of $\lim_{G/H \in \Orb_{G,\fin}^{\opname}} \mathbf{K}(\Proj(kH))$. It follows that $\mathbf{i}k$ must be the monoidal unit of $\mathbf{K}(\mathrm{Proj}(kG))$ by \cref{thm-descent-kproj}.
\end{proof}

Recall the functor from \cref{cor-funct-dperm}.
\begin{Thm}\label{thm-descent-permutation}
Let $k$ be a commutative Noetherian ring and let $G$ be a group admitting a finite-dimensional model for $\underline{E}G$. Then there is a symmetric monoidal equivalence:
    \[
    \DPerm^{\fin}(G, k)\simeq \lim_{G/H \in \Orb_{G,\fin}^{\opname}} \DPerm(H,k).
    \]
\end{Thm}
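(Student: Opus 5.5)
The plan is to apply \cref{thm-descent} to the functor $\DPerm^{\fin}(-,k)\colon \Orb_G^{\opname}\to \CAlg(\PrL)$ from \cref{cor-funct-dperm}, with $\cat F$ the family of finite subgroups of $G$. For a finite subgroup $H$, the category $\DPerm^{\fin}(H,k)$ coincides with Balmer--Gallauer's $\DPerm(H,k)$, as noted after the definition. So the limit produced by \cref{thm-descent} is exactly the one in the statement, and the equivalence is symmetric monoidal by the second part of that theorem. It therefore suffices to verify the three conditions of \cref{hyp-functor}.

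Condition (c), joint conservativity of $\{\res^G_H\}_{H\text{ finite}}$, is \cref{rem:conservativity}. For condition (a) we need a left adjoint of $f^*$ for every map $f\colon G/K\to G/H$ in $\Orb_G$. Such a map factors as a conjugation isomorphism followed by the map induced by an inclusion $K'\subseteq H$, so it suffices to treat restriction along inclusions.
\begin{itemize}
\item If $H$ is finite, the left adjoint is the induction functor of \cref{prop-ind-res-permutation-mod}, applied with $H$ in place of $G$.
\item For general $K\subseteq H\subseteq G$, we argue directly. Induction $\ind_K^H$ on $\mathbf{K}(\Mod)$ is left adjoint to restriction. It preserves coproducts and cones, and it sends $k(K/L)$ to $k(H/L)$. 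Hence it maps $\Loc(\mathrm{perm}^{\fin}(K,k))$ into $\Loc(\mathrm{perm}^{\fin}(H,k))$ and restricts to a left adjoint there.
\item We also need restriction to preserve the permutation localizing subcategories. This follows from the Mackey formula, since $\res^H_K k(H/L)$ is a sum of modules $k(K/(K\cap {}^hL))$ with finite isotropy.
\end{itemize}
Both adjoints are computed in $\mathbf{K}(\Mod(k-))$, which makes the compatibility with the $\infty$-categorical functor of \cref{cons-funct-Kproj-KMod} transparent.

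Condition (b), the Mackey formula, is where I expect the main care to be needed. A pullback of orbits $G/H\times_{G/K}G/L$ decomposes as $\coprod_{KgL}$ of orbits $G/(H\cap {}^gL)$, and we must check that the resulting square is left adjointable. My plan is to deduce this from the corresponding statement for $\mathbf{K}(\Mod(k-))$, or equivalently at the level of chain complexes of modules. There the Beck--Chevalley map $\ind\res\Rightarrow\res\ind$ is the classical Mackey isomorphism $\res^K_L\ind_H^K M\cong\bigoplus_{g}\ind^L_{L\cap {}^gH}\res^{{}^gH}_{L\cap{}^gH}\,{}^gM$, valid degreewise; this is the same input used in \cite[Lemma 3.2]{MPB06}. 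Since all four functors in the square restrict to the full subcategories $\DPerm^{\fin}$, and the mate there is computed by the same formula, left adjointability is inherited from the ambient one.

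The point to watch is that the possibly infinite coproduct over double cosets in $\cat C(X)=\prod\cat C(H\cap{}^gL)$ matches the direct sum in modules. I would handle this by noting that coproducts in $\DPerm^{\fin}$ are computed in $\mathbf{K}(\Mod)$ and that induction along the disjoint union is the sum of the inductions.
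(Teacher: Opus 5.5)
Your proposal is correct and follows the paper's proof. You apply \cref{thm-descent} with $\cat F$ the finite subgroups, taking (a) from the induction adjunction of \cref{prop-ind-res-permutation-mod}, (b) from the Mackey formula for $kG$-modules as in \cite[Lemma 3.2]{MPB06}, and (c) from \cref{rem:conservativity}. Your extra check, that induction and restriction along arbitrary (possibly infinite) inclusions $K\subseteq H$ preserve the localizing subcategories generated by finite permutation modules, is a useful detail: the paper leaves it implicit, since \cref{prop-ind-res-permutation-mod} is only stated for finite $H$.
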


\begin{proof}
   The fact that the restriction functor admits a left adjoint is discussed in \cref{prop-ind-res-permutation-mod}, and the Mackey formula follows again from the case of $kG$-modules, see \cite[Lemma 3.2]{MPB06}. Conservativity of the restriction functors follows from \cref{rem:conservativity}.
\end{proof}

\begin{Rem}
    We warn the reader that, as opposed to its counterparts $\mathbf{StMod}(kG)$ and $\mathbf{K}(\mathrm{Proj}(kG))$,  the derived category of finite permutation modules does not satisfy descent to elementary abelian subgroups. Indeed, this was already observed in \cite[Remark 6.4]{Gomez2025} for the group $G=C_4$. In this case, the restriction functor 
    \[
    \res^{C_4}_{C_2}\colon\DPerm^{\fin}(C_4, k)\to  \DPerm(C_2,k) 
    \]
    is not conservative. 
\end{Rem}

On the positive side, one may instead descend the derived category of finite permutation modules to the family of all subgroups of $p$-power order, allowing the prime $p$ to vary:

\begin{Cor}
  Let $k$ be a commutative Noetherian ring and let $G$ be a group admitting a finite-dimensional model for $\underline{E}G$. Let $\cat {F}$ denote the family of $p$-power order finite subgroups of $G$, allowing the prime $p$ to vary.
  Then there is a symmetric monoidal equivalence:
    \[
    \DPerm^{\fin}(G, k)\simeq \lim_{G/H \in \Orb_{G,\cat F}^{\opname}} \DPerm(H,k).
    \]
\end{Cor}

\begin{proof}
   We only need to verify that the family $\{\res^G_H\mid H \in \cat F\}$ is jointly conservative. Since $\{\res^G_H\mid H \subseteq G \; \mathrm{finite}\}$ is jointly conservative by \cref{rem-dperm is comp}, and $\res^G_H \circ \res^H_L \simeq \res^G_L$ for any chain of subgroups $L \subseteq H \subseteq G$, it follows that it suffices to prove the claim in the case where $G$ is finite. This is precisely \cite[Proposition 6.3]{Gomez2025}.
\end{proof}

\begin{Rem}
In particular, the previous result  may reduce, to some extend, the study of the category $\DPerm^{\fin}(G,k)$ to the case where $G$ is a finite $p$-group.    
\end{Rem}

\subsection{Groups acting on trees}

We now focus on groups acting on trees with finite isotropy. By Bass–Serre theory, such groups correspond to fundamental groups of graphs of finite groups \cite{DD89}, and we will use this perspective throughout this section. Let us begin with an example.

\begin{Exa}\label{Ex-Orb-SL2}
    Let $G=\mathrm{SL}_2(\mathbb Z)=C_6\ast_{C_2}C_4$. In this case, the orbit category $\Orb_{G,\fin}$ for the family of finite subgroups has a cofinal subcategory  whose objects are  $G/C_2$, $G/C_4$ and $G/C_6$, together  with two $G$-equivariant maps $G/C_2\to G/C_4$ and $G/C_2\to G/C_6$; see, for instance, \cite[Examples 2.6]{barcenas2023survey}. In particular, for $\mathrm{SL}_2(\Z)$ all the examples from the previous section admit a more appealing description as pullbacks.   
\end{Exa}

In fact, for the stable module category, one can generalize the previous example in a suitable sense. 

\begin{Exa}\label{Ex-stmod-trees}
   Let $G$ be a group acting on a tree with finite isotropy, and let $k$ be as in \cref{hyp}. Then the stable module category $\mathbf{StMod}(kG)$ satisfies descent with respect to the graph of groups associated to $G$. More explicitly, let $\Gamma_G \colon \Gamma \to \mathbf{Grp}$ denote the graph of groups associated to $G$. Then the restriction functors induce an equivalence of stable $\infty$-categories
    \[
    \mathbf{StMod}(kG)\xrightarrow[]{\simeq} \lim_{\sigma\in\Gamma} \mathbf{StMod}(kG_\sigma)
    \]
    where $G_\sigma$ denotes the value of $\Gamma_G$ at $\sigma\in \Gamma$. We refer to \cite[Section 3]{Gomez24} for further details. 
\end{Exa}

Note that in \cref{Ex-Orb-SL2}, this full subcategory is precisely the one associated with the graph of groups determining $\mathrm{SL}_2(\mathbb Z)$. Motivated by these examples, one may ask whether the other functors considered in the previous section also admit a descent description in terms of the graph of groups. We leave this as an open question.

\begin{question}
  Let $G$ be a group acting on a tree with finite isotropy. Under which additional assumptions on $G$ do the categories $\mathbf{K}(\Proj(kG))$ and $\mathbf{DPerm}^{\fin}(G,k)$ admit a description in terms of the associated graph of groups?
\end{question}

\begin{Rem}
    Of course, one should not expect this to hold in full generality. For instance, let $G = \mathbb{Z}$. In this case, $G$ is an HNN extension, and its associated graph of groups consists of a single vertex with a loop, with the trivial group assigned everywhere. In particular,
    \[
    \lim_{\sigma\in\Gamma} \mathbf{K}(\Proj(kG_\sigma)) \simeq \mathbf{D}(k)
    \]
    which does not agree with $\mathbf{K}(\Proj(kG))$.  
    \end{Rem}

\section{Finite permutation resolutions}\label{sec-perm-resolutions}

For finite groups and over a regular ring (which is assumed to be Noetherian, see \cref{rem-regular-rings}), we know by a result of Balmer-Gallauer that the homotopy category of projectives is a finite localization of the derived category of permutation modules. In this section, we show that a similar result also holds for groups with a finite-dimensional model for $\underline{E}G$.

\begin{Cons}
    We note that for a finite group $H$, we have a natural functor
    \[
    q_H \colon \DPerm(H,k)^c \to \mathbf{D}^b(\mod(kH))
    \]
    which is induced by the inclusion $\mathrm{perm}(H,k)\subseteq \mod(kH)$. In fact the above functors assemble into a natural transformation 
    \[
    (q \colon  \DPerm(-,k)^c \to \mathbf{D}^b(\mod(k-)) \in \Fun(\Orb_{G,\fin}^{\opname},\CAlg(\Cat^{\perf}_\infty)).
    \]
    Passing to the Ind-completion we obtain a natural transformation between big categories. If $k$ is regular, we know that  
    \[
    \Ind(\mathbf{D}^b(\mod(kH)))=\mathbf{K}(\Proj(kH))
    \]
    by \cite[Propositions 2.14 and 2.16]{BBIKP}. Therefore we have constructed a natural transformation 
    \[
    (q \colon  \DPerm(-,k) \to  \mathbf{K}(\Proj(k-))\in \Fun(\Orb_{G,\fin}^{\opname},\CAlg(\PrL)).
    \]
\end{Cons}
The starting point is the following result. 
\begin{Thm}[Balmer-Gallauer]\label{prop-radjoint-fincase}
    Let $H$ be a finite group and $k$ a regular  commutative ring. Then the canonical functor 
    \[
    q_H \colon \DPerm(H,k) \to \mathbf{K}(\Proj(kH))
    \]
    is a finite localization. 
\end{Thm}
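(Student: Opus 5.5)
This is attributed to Balmer–Gallauer, so the plan is to repackage \cite[Theorem 1.4]{balmer2023finite} in the present language rather than reprove it. I would first check that $q_H$ is a colimit-preserving functor between compactly generated categories that preserves compacts. By construction it is $\Ind$ of the exact functor $\DPerm(H,k)^c=\mathbf{K}^b(\mathrm{perm}(H,k)^\natural)\to \mathbf{D}^b(\mod(kH))$, and its target is identified with $\Ind(\mathbf{D}^b(\mod(kH)))=\mathbf{K}(\Proj(kH))$ via \cite[Propositions 2.14 and 2.16]{BBIKP}; this identification uses regularity of $k$. Being an Ind-extension, $q_H$ preserves colimits and compact objects, and so it has a right adjoint by presentability.

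Next I would reduce the statement to one about small categories. A functor $\Ind(F)$ with $F\colon \cat A\to\cat B$ exact between idempotent complete stable categories is a finite localization, meaning a Verdier quotient by the localizing subcategory generated by a set of compacts. This holds exactly when $F$ is, up to idempotent completion, a Verdier quotient $\cat A/\cat K\to \cat B$ with $\cat K=\ker F$, as in Neeman's localization theorem. So the remaining task is to show that
\[
\mathbf{K}^b(\mathrm{perm}(H,k)^\natural)/\ker(q_H)\longrightarrow \mathbf{D}^b(\mod(kH))
\]
is an equivalence up to idempotent completion, with $\ker(q_H)$ the thick subcategory of acyclic complexes of $\natural$-permutation modules.

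That last equivalence is the genuine content of \cite[Theorem 1.4]{balmer2023finite}, and it is where I expect the main obstacle to lie. It has two parts. Essential surjectivity up to retracts is the hard one: every finitely generated $kH$-module must be a retract of an object with a finite $\natural$-permutation resolution. Balmer–Gallauer obtain this by tensor induction from the trivial subgroup together with finite global dimension (regularity) of $k$. Full faithfulness of the quotient functor follows from a calculus-of-fractions argument, using that maps out of permutation modules into acyclic complexes are controlled by restriction to subgroups. I would cite these results instead of reproving them.

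Finally, I would record the upshot in the form used later: the kernel of $q_H$ is $\Loc\langle \ker(q_H)^c\rangle$, generated by compact acyclic permutation complexes, and the right adjoint of $q_H$ is fully faithful. Hence $q_H$ is a smashing, and in fact finite, localization, compatible with the symmetric monoidal structures because $q_H$ lives in $\CAlg(\PrL)$.
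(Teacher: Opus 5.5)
Your proposal is correct and follows the same route as the paper. The paper's proof is the one-line observation that $q_H$ is the Ind-completion of the functor $\mathbf{K}^b(\mathrm{perm}(H,k)^\natural)\to\mathbf{D}^b(\mod(kH))$ of \cite[Theorem 1.4]{balmer2023finite}, with the target identified with $\mathbf{K}(\Proj(kH))$ via \cite{BBIKP}, and hence a finite localization; you make explicit the Neeman--Thomason step that the paper leaves implicit (Ind of a Verdier quotient, up to idempotent completion, is a finite localization), and your sketch of Balmer--Gallauer's internal argument is not needed, since that theorem is cited.
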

\begin{proof}
    Our functor $q$ is precisely the ind-completion of the functor constructed in \cite[Theorem 1.4]{balmer2023finite}, and so it is a finite localization. 
\end{proof}

\begin{Cons}\label{cons-q}
    Let $G$ be a group admitting a finite-dimensional model for $\underline{E}G$ and let $k$ be as in \cref{hyp}. In this setting we have a functor 
    \[
    q_G \colon \DPerm^{\fin}(G,k) \to \mathbf{K}(\Proj(kG)) 
    \]
    which is defined via the following commutative square 
    \begin{equation}\label{def-qG}
    \begin{tikzcd}
         \DPerm^{\fin}(G,k)\arrow[r, "q_G"] \arrow[d, "\res"',"\sim"] & \mathbf{K}(\Proj(kG)) \arrow[d,"\res", "\sim"']\\
        \lim_H\DPerm^{\fin}(H,k) \arrow[r,"\lim_H q_H"] & \lim_H \mathbf{K}(\Proj(kH)) 
    \end{tikzcd}
    \end{equation}
    where we used our descent results \cref{thm-descent-kproj} and \cref{thm-descent-permutation}. 
\end{Cons}

With this result in hands, we can deduce a similar statement for more general groups. 

\begin{Thm}\label{thm-perm-localization-kproj}
    Let $G$ be a group admitting a finite-dimensional model for $\underline{E}G$ and $k$ as in \cref{hyp}. Then the canonical functor 
    \[
    q_G \colon \DPerm^{\fin}(G,k) \to \mathbf{K}(\Proj(kG))
    \]
    is a finite localization. 
\end{Thm}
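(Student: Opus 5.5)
We will first show that $q_G$ is a smashing localization with a fully faithful right adjoint, and then identify its kernel as generated by a set of compact objects of $\DPerm^{\fin}(G,k)$.

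\emph{Step 1: $q_G$ is a localization.} By \cref{prop-radjoint-fincase}, each $q_H$ is a finite localization. So $q_H$ has a fully faithful right adjoint $r_H$, and $q_H$ preserves compact objects. By the construction of $q$ as a natural transformation in $\CAlg(\PrL)$, the $q_H$ are compatible with restrictions. Since the kernel of each $q_H$ is a tensor ideal, $r_H q_H \simeq A_H\otimes -$ for an idempotent algebra $A_H = r_H(\unit)$. Two facts make these assemble. Restriction $\res^H_L$ is symmetric monoidal and sends the kernel generators to kernel generators, and the finite localizations commute with restriction. Hence $\res^H_L A_H \simeq A_L$, so the squares defining the right adjoints are right adjointable. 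A limit of localizations along right-adjointable transition maps is again a localization, with right adjoint computed levelwise. Therefore $\lim_H q_H$, and hence $q_G$ by \eqref{def-qG}, has a fully faithful right adjoint $r_G$. Moreover, $r_Gq_G \simeq A\otimes -$, where $A\in\DPerm^{\fin}(G,k)$ corresponds to the compatible family $(A_H)_H$. So $q_G$ is a smashing localization, and its kernel $\cat K$ is the localizing tensor ideal of objects $M$ with $\res^G_H M\in\ker(q_H)$ for all finite $H$.

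\emph{Step 2: the kernel is compactly generated.} For each finite $H$, write $\ker(q_H) = \Loc\langle \cat S_H\rangle$ with $\cat S_H$ a set of compact objects of $\DPerm(H,k)$. By Balmer–Gallauer, these can be taken to be bounded complexes of permutation modules that are acyclic (homology vanishes), for example cones of maps whose images in $\mathbf{D}^b(\mod(kH))$ are zero. Consider the set
\[
\cat S=\{\ind_H^G s \mid s\in\cat S_H,\ H\subseteq G \text{ finite}\}.
\]
These objects are compact in $\DPerm^{\fin}(G,k)$ by \cref{rem-dperm is comp}, since induction preserves finite permutation modules and bounded complexes. They also lie in $\cat K$. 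Indeed, by the Mackey formula, $\res^G_L\ind^G_H s$ is a sum of terms $\ind^L_{L\cap {}^gH}\res\, c_g^* s$. These lie in $\ker(q_L)$, because the $q$'s commute with restriction and with induction; the latter holds since induction is left adjoint to restriction, which is compatible with $q$, and the kernels are closed under induction, being restriction-stable tensor ideals generated compatibly. So $\Loc\langle\cat S\rangle\subseteq\cat K$.

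For the reverse inclusion, take $M\in\cat K$ with $\Hom(\cat S, M)=0$. By adjunction, $\res^G_H M\in \cat S_H^{\perp}$ for every finite $H$. Since $\res^G_H M$ also lies in $\ker(q_H)=\Loc\langle\cat S_H\rangle$, we get $\res^G_H M\simeq 0$. By \cref{rem:conservativity}, $M\simeq 0$. It follows that $\cat K=\Loc\langle\cat S\rangle$ is generated by compact objects of $\DPerm^{\fin}(G,k)$. Combined with Step 1, this shows $q_G$ is a finite localization.

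The main obstacle is Step 1: checking that the right adjoints $r_H$ commute with restriction, equivalently that $\res^H_L(A_H)\simeq A_L$, which is what makes the limit of localizations a localization. I would first try to prove this via the explicit description of the kernel in \cite{balmer2023finite}. There, $\ker(q_H)$ is generated by acyclic bounded permutation complexes, which are preserved by restriction, so restriction maps $\ker(q_H)$ into $\ker(q_L)$. It also maps $\ker(q_H)^\perp$ into $\ker(q_L)^\perp$: restriction has induction as a left adjoint, and induction preserves acyclic permutation complexes. Together these give the compatibility of the idempotent triangles, and hence of the $A_H$.
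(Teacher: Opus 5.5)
Your argument is correct, but it takes a different route from the paper in both steps.

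\textbf{How the paper argues.} The paper takes horizontal fibres of \eqref{def-qG} to get $\Ker(q_G)\simeq\lim_H\Ker(q_H)$. It concludes that this is a smashing ideal by citing that smashing ideals commute with limits. It then proves compact generation by applying \cref{prop-compactgen} to the projections $p_H\colon\lim_H\Ker(q_H)\to\Ker(q_H)$. The left adjoints of these projections are produced abstractly, from the partially lax limit results of \cite{Sil} and the adjoint functor theorem.

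\textbf{How you argue.} In Step 1 you check the Beck--Chevalley condition for the right adjoints $r_H$ directly. Restriction preserves $\Ker(q_H)^\perp$ because its left adjoint, induction, sends bounded acyclic $\natural$-permutation complexes to such complexes, and these are the compact generators of $\Ker(q_L)$. Hence $\lim_H q_H$ is a localization whose right adjoint is computed levelwise. In Step 2 you replace the abstract left adjoints $(p_H)_!$ by the concrete functors $\ind^G_H$, and use the Mackey formula to place $\ind^G_H s$ in the kernel.

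\textbf{What each route buys.} Your route gives an explicit set of compact generators of $\Ker(q_G)$: induced bounded acyclic complexes of finite $\natural$-permutation modules. It also makes explicit the compatibility $\res^H_L(A_H)\simeq A_L$ of the idempotent algebras. This compatibility is what is needed for $(\Ker q_H)_H$ to define a point of the limit of smashing ideals, and the paper's appeal to that limit statement needs it without spelling it out. The paper's route is shorter and purely formal once that compatibility is granted.

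\textbf{Points to tighten.}
\begin{itemize}
\item In Step 1, the sentence ``the finite localizations commute with restriction; hence $\res^H_L A_H\simeq A_L$'' assumes what it is meant to prove. Your final paragraph is the actual proof and should replace it.
\item In Step 2 you do not need $q$ to commute with induction. You only need $\ind^L_{L'}(\Ker q_{L'})\subseteq\Ker q_L$ for finite $L'\subseteq L$. This is the same fact about acyclic permutation complexes, combined with $\Ker q_{L'}$ being generated by its compact objects (\cref{prop-radjoint-fincase}). The phrase ``being restriction-stable tensor ideals generated compatibly'' is not an argument.
\item The equivalence $r_Hq_H\simeq A_H\otimes-$ holds because $q_H$ is a finite localization of a rigidly-compactly generated category. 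It does not follow merely from the kernel being a tensor ideal.
\item ``Cones of maps whose images in $\mathbf{D}^b(\mod(kH))$ are zero'' should read ``cones of maps whose images are isomorphisms''. Alternatively, simply take all bounded acyclic complexes of $\natural$-permutation modules.
\end{itemize}
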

\begin{proof}
   Consider the commutative square \eqref{def-qG}. By taking horizontal fibers and noting that fibers commutes with limits, we obtain an equivalence 
   \[
   \Ker(q_G)\simeq \lim_H \Ker(q_H).
   \]
   \cref{prop-radjoint-fincase} implies that $\Ker(q_H)$ is a smashing ideal for any finite subgroup $H\subseteq G$. Since taking smashing ideals commutes with limits by \cite[Corollary 9.9]{descent}, we deduce that $\Ker(q_G)$ is a smashing ideal too, and so $q_G$ is a smashing localization. We now verify that it is a finite localization. 
   Since the inclusion $\Ker(q_G) \subseteq \DPerm^{\fin}(G,k)$ preserves compact objects (because $q_G$ is a smashing localization), it will be enough to verify that the kernel is compactly generated. To this end, we apply \cref{prop-compactgen} to the projection functors 
   \[
   \{p_H \colon \lim_H \Ker(q_H) \to \Ker(q_H)\}_{H}.
   \]
   Condition (c) of \cref{prop-compactgen} is satisfied by construction, and (b) holds by \cref{prop-radjoint-fincase}. Thus, the only condition that requires discussion is (a). For this, we apply \cite[Proposition 3.14]{Sil} to the functor 
   \[
   \Orb_{G,\fin}^{\opname}\to \CAlg(\PrL), \quad G/H \mapsto \Ker(q_H)
   \]
   with marking $\mathcal{W}=\Orb_{G,\fin}^{\opname}$ so that the partially lax limit agrees with the limit. Note that the proposition applies since the transition functors (which in our case are given by the restriction functors) preserves all limits. It then follows from the cited result that $\lim_H \Ker(q_H)$ admits all limits, and the discussion in \cite[Remark 3.15]{Sil} shows that these limits are computed pointwise. All together this implies that the projection functors $p_H$'s preserve limits and so by the adjoint functor theorem they admit a left adjoint, giving (a).  
\end{proof}
 As a consequence of our work, we obtain the following generalization of \cite{balmer2023finite}:

\begin{Cor}\label{coro-finite-perm-res}
     Let $G$ be a group admitting a finite-dimensional model for $\underline{E}G$ and $k$ as in \cref{hyp}. Then any  $kG$--module $M$ of type $FP_\infty$ is a retract of a $kG$--module that has a finite resolution by finite $\natural$-permutation $kG$-modules; that is,  modules in $\mathrm{perm}^{\fin}(G,k)^\natural$. 
\end{Cor}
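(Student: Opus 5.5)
The plan is to pass to compact objects in the finite localization of \cref{thm-perm-localization-kproj} and use the standard fact that compacts of a finite localization are, up to retracts, images of compacts. Since $q_G$ is a finite localization, Neeman's localization theorem gives that the induced functor
\[
\DPerm^{\fin}(G,k)^c/\Ker(q_G)^c \longrightarrow \mathbf{K}(\Proj(kG))^c
\]
is an equivalence up to idempotent completion. Consequently, every compact object of $\mathbf{K}(\Proj(kG))$ is a direct summand of $q_G(P)$ for some compact $P\in\DPerm^{\fin}(G,k)^c$. By \cref{rem-dperm is comp}, such a $P$ may be taken to be a bounded complex in $\mathbf{K}^b(\mathrm{perm}^{\fin}(G,k)^\natural)$.

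Next, let $M$ be of type $FP_\infty$. By \cref{cor-compact-Kproj}, $M$ viewed in $\mathbf{D}^b(\mod_\infty(kG))$ corresponds to a compact object of $\mathbf{K}(\Proj(kG))$, namely (a shift of) $\mathbf{i}$ applied to a $\mathrm{Gp}$-resolution. So there are a bounded complex $P$ of finite $\natural$-permutation modules and some $N$ with $q_G(P)\simeq M\oplus N$ in $\mathbf{K}(\Proj(kG))^c\simeq \mathbf{D}^b(\mod_\infty(kG))$. Here one must check that $q_G$ restricted to compacts is the obvious functor regarding a bounded complex of finite permutation modules as an object of $\mathbf{D}^b(\mod_\infty(kG))$. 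Note that $k(G/H)$ with $H$ finite is of type $FP_\infty$, since $kH$-modules of finite type are, $k$ being Noetherian. This compatibility is the step I expect to be the main obstacle, because $q_G$ is defined abstractly as a limit. I would verify it on generators $k(G/H)=\ind_H^G k$. For this I would show that $q$ commutes with induction, using that $q$ is a natural transformation whose components are compatible with the left adjoints via the Mackey/adjointability of the squares. It then reduces to $q_H(k)$ being the image of the trivial module, which is the Balmer–Gallauer description in \cref{prop-radjoint-fincase}.

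Finally, I would convert the derived statement into a module statement. The complex $P$ is bounded, and its image in $\mathbf{D}^b(\mod_\infty(kG))$ has cohomology concentrated in degree $0$ up to the summand $N$. To get an honest resolution, I would follow Balmer–Gallauer's trick: truncate and use the finite global dimension of $k$ together with the finite-dimensional $\underline{E}G$ to ensure $P$ can be modified, by adding contractible cones and splicing, into a complex $0\to P_n\to\cdots\to P_0\to M'\to 0$ that is exact with $M'\simeq M\oplus M''$. Concretely, since $M$ is a retract of $P$ in the derived category, and $\mathbf{D}^b(\mod_\infty(kG))\to\mathbf{D}(kG)$ is fully faithful on these objects, I would replace $N$ by $N\oplus\Sigma N$-type corrections (Eilenberg swindle on the finite level, e.g. $M\oplus N$ with $N$ realized by $P\otimes$ a shift) so that the total complex has homology only in degree zero. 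Its degree-zero homology is then $M\oplus M''$, which is the desired module with a finite resolution by modules in $\mathrm{perm}^{\fin}(G,k)^\natural$.
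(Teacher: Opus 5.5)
Your first two paragraphs are the paper's proof. You apply Neeman--Thomason to the finite localization $q_G$ of \cref{thm-perm-localization-kproj}, and you identify the compacts via \cref{rem-dperm is comp} and \cref{cor-compact-Kproj}. This makes $M$ a retract in $\mathbf{D}^b(\mathrm{mod}_\infty(kG))$ of a bounded complex $P$ of modules in $\mathrm{perm}^{\fin}(G,k)^\natural$. The paper stops here and reads off the statement from the inclusion $\mathrm{mod}_\infty(kG)\subseteq\mathbf{D}^b(\mathrm{mod}_\infty(kG))$. It takes for granted that $q_G^c$ is the evident functor, so your concern about that is legitimate. The cleanest way to settle it is to compare functors rather than values on the generators $k(G/H)$: check that the evident comparison functor commutes with restriction to finite subgroups and agrees with $q_H$ there, then use the universal property of the limit in \cref{cons-q}.

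The genuine gap is your last paragraph. Write $q_G(P)\simeq M\oplus N$. None of the operations you list can remove the homology of $N$ in nonzero degrees:
\begin{itemize}
\item adding contractible complexes does not change homology;
\item adding copies of $\Sigma N$ only adds homology;
\item an Eilenberg swindle needs infinite direct sums, which do not exist in $\mathbf{D}^b(\mathrm{mod}_\infty(kG))$ and would destroy finiteness anyway;
\item neither the finite-dimensionality of $\underline{E}G$ nor the global dimension of $k$ has any bearing on this.
\end{itemize}
The tool that does concentrate homology in degree $0$ is Thomason's classification of dense triangulated subcategories. The essential image of the Verdier quotient is $\{X : [X]\in A\}$ for a subgroup $A\subseteq K_0(\mathbf{D}^b(\mathrm{mod}_\infty(kG)))$ containing $[kG]$. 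A sequence $0\to\Omega M\to F\to M\to 0$ with $F$ finitely generated free then gives $[M\oplus\Omega M]=[F]\in A$. Hence $q_G(P)\simeq M\oplus\Omega M$ for some bounded complex $P$ of finite $\natural$-permutation modules.

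Even so, $P$ may have nonzero terms in negative homological degrees, and these cannot be spliced away formally. The differential onto the lowest term is surjective but need not split, because $\natural$-permutation modules are not projective. If it did split you could cancel it, since the kernel would again be $\natural$-permutation. So getting an honest resolution $0\to P_n\to\cdots\to P_0\to M\oplus M'\to 0$ needs a real additional argument. Your sketch does not provide one, and the paper's one-line conclusion treats this step as immediate.
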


\begin{proof}
The finite localization functor $q_G$ from  \cref{thm-perm-localization-kproj} induces a functor on compact objects, which, thanks to \cref{cor-compact-Kproj} and \cref{rem-dperm is comp}, we can rewrite as 
\[
q_G^c\colon \mathbf{K}^b(\mathrm{perm}^{\fin}(G,k)^\natural)  \to \mathbf{D}^{b}(\mathrm{mod}_\infty(kG)).
\]
By Neeman–Thomason localization theorem \cite[Theorem 2.1]{Neeman92}, this map is a Verdier quotient up to idempotent completion:
\[
\left(\frac{\mathbf{K}^b(\mathrm{perm}^{\fin}(G,k)^\natural)}{\text{acyclic cxs}} \right)^\natural\xrightarrow{\sim} \mathbf{D}^{b}(\mathrm{mod}_\infty(kG)).
\]
Combining this with the canonical inclusion
\[
\mathrm{mod}_\infty(kG) \subseteq \mathbf{D}^{b}(\mathrm{mod}_\infty(kG))
\]
gives the claim. 
\end{proof}

We conclude this section with the following open question and a related remark.

\begin{question}\label{ques-1}
Let $G$ be a group admitting a finite-dimensional model for $\underline{E}G$, and let $k$ be a field of positive characteristic. Does every $kG$--module of type $FP_\infty$ admit a finite resolution by finite $\natural$-permutation $kG$--modules?
\end{question}

\begin{Rem}
For finite groups over a field of positive characteristic, Balmer--Gallauer proved in \cite{balmer2023finite} that the idempotent completion of the Verdier quotient associated to the functor $q^c_G$ is unnecessary. Consequently, one obtains a positive answer to \cref{ques-1} in this case.

It is unclear whether the same statement holds for infinite groups, and it is not evident how to adapt the methods of \textit{loc. cit.} to our setting (one of the problem being that $K_0(\mod_\infty(kG))$ is not a ring in general). In particular, we are not aware of analogous $K$-theoretic properties being established for infinite groups.
\end{Rem}

\section{Picard groups of permutation modules}\label{sec-picard}

In this section we give some computations of the Picard group of permutation categories associated to groups with a finite-dimensional model for the classifying space for proper actions with special focus on groups acting on trees with finite isotropy. 

Let $\cat C$ an arbitrary symmetric monoidal $\infty$-category. We briefly recall some facts about the Picard spectrum associated to $\cat C$. For further details, we refer to \cite{MS16}. 

\begin{Rec}
    Recall that the \textit{Picard group $\mathbf{Pic}(\cat C)$} of $\cat C$ is the the group of iso-classes of invertible objects with multiplication given by the tensor product. An object $x$ in $\cat C$ is \textit{invertible} if there is $y$ in $\cat C$ such that $x\otimes y\simeq \mathbb 1$. In fact, one can associate a connective  spectrum $\mathrm{pic}(\cat C)$ which has $\mathbf{Pic}(\cat C)$ as one of its homotopy groups. Explicitly, there is a functor 
     \[
     \mathrm{pic}(-)\colon \mathrm{Cat}^\otimes\to \Sp_{\geq0} 
     \]       
given as the composition $\Omega ^\infty \mathrm{Pic}$ where $\mathrm{Pic}(\cat C)$ is given as the space of invertible objects in $\cat C$ and equivalences between them which is a group-like space. In particular, we have that 
\[
\pi_i\mathrm{pic}(\cat C) = \left\{
        \begin{array}{ll}
           
            \mathbf{Pic}(\cat C) & \textrm{ if } n=0\\
            (\pi_0 \mathrm{Hom}_{\cat C}(\mathbb{1},\mathbb{1}))^\times & \textrm{ if } n=1 \\
            \pi_{i-1} \mathrm{Hom}_{\cat C} (\mathbb{1},\mathbb{1}) & \textrm{ if } n\geq 2.
        \end{array}
    \right.
\]
\end{Rec}

\begin{Rem}\label{rem-spectral sequence for lim}
    Our interest in the functor  $\mathrm{pic}$ lies in the fact that it commutes with limits and filtered colimits; see \cite[Proposition 2.2.3]{MS16}. In particular, given a functor  $\cat C_\bullet\colon  I^{\opname} \to\mathrm{Cat}^\otimes$, there is a Bousfield-Kan spectral sequence 
\begin{equation}\label{spectral sequence for Pic}
E^{p,q}_2=H^p(I;\pi_q \mathrm{pic}(\cat C_\bullet ))\Rightarrow \pi_{q-p}\left(\varprojlim \mathrm{pic}\left(\lim_I \cat C_\bullet\right)\right).    
\end{equation}  
 In particular, this spectral sequence computes the Picard group of $\lim_I \cat C_\bullet$. Note that this spectral sequence has no convergence issues as the classical 
 Bousfield-Kan spectral sequence for homotopy limits since $\mathrm{pic}$ is connective; see \cite[Section 1.2.2]{HA}. 
\end{Rem}

\begin{Cor}\label{thm-ses for Dperm}
    Let $G$ be a group with finite-dimensional model for $\underline{E}G$ and $k$ be commutative Noetherian ring. Consider the functor  
    \[
    \DPerm^{\fin}_\bullet\colon \Orb_{G,\fin}^{\opname} \to \CAlg(\PrL)
    \]
    introduced in \cref{cor-funct-dperm}.  Then there is an exact sequence of abelian groups
    \begin{center}
    \adjustbox{scale=0.80,center}{%
    \begin{tikzcd}
0\arrow[r] & H^1(\Orb_{G,\fin};\pi_1 \mathrm{pic}\DPerm^{\fin}_\bullet) \arrow[r]
&   \mathbf{Pic} (\DPerm^{\fin}(G,k)) \arrow[d, phantom, ""{coordinate, name=Z}]
\arrow[d,
rounded corners,"",
to path={ -- ([xshift=4ex]\tikztostart.east)
 |- (Z) [near start]\tikztonodes
-|([xshift=-4ex]\tikztotarget.west)
-- (\tikztotarget)}] &
 \\
& &  H^0(\Orb_{G,\fin};\pi_0 \mathrm{pic}\DPerm^{\fin}_\bullet) \arrow[r] & H^2(\Orb_{G,\fin};k^\times) .
\end{tikzcd}}
\end{center}  
\end{Cor}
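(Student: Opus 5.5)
By \cref{thm-descent-permutation}, $\DPerm^{\fin}(G,k)$ is symmetric monoidally equivalent to $\lim_{G/H\in\Orb_{G,\fin}^{\opname}}\DPerm^{\fin}(H,k)$. Since $\mathrm{pic}$ commutes with limits (\cref{rem-spectral sequence for lim}), we obtain
\[
\mathrm{pic}(\DPerm^{\fin}(G,k))\simeq \lim_{\Orb_{G,\fin}^{\opname}} \mathrm{pic}(\DPerm^{\fin}_\bullet).
\]
The plan is to extract the low-degree part of the Bousfield--Kan spectral sequence \eqref{spectral sequence for Pic} for this limit, with $I=\Orb_{G,\fin}$. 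Here $E_2^{p,q}=H^p(\Orb_{G,\fin};\pi_q\mathrm{pic}\DPerm^{\fin}_\bullet)$, and we want the abutment in total degree $q-p=0$, which is $\mathbf{Pic}(\DPerm^{\fin}(G,k))$.

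In total degree $0$ only the terms $E^{0,0}$, $E^{1,1}$, $E^{2,2}$, $\dots$ contribute. The first step is to identify the relevant coefficient systems:
\begin{itemize}
\item $\pi_0\mathrm{pic}$ is $H\mapsto \mathbf{Pic}(\DPerm^{\fin}(H,k))$;
\item $\pi_1\mathrm{pic}$ is $H\mapsto \pi_0\End(\unit)^\times$;
\item $\pi_q\mathrm{pic}$ for $q\ge 2$ is $H\mapsto\pi_{q-1}\End(\unit)$.
\end{itemize}
For a finite group $H$ the unit of $\DPerm(H,k)$ is the trivial module $k$, which lies in $\mathrm{perm}(H,k)$. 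Hence $\End(\unit)$ is computed in the homotopy category as chain maps $k\to k$ up to homotopy. This gives $\pi_0\End(\unit)=\Hom_{kH}(k,k)=k$, and $\pi_{i}\End(\unit)=\Hom_{\mathbf K}(k,k[-i])=0$ for $i\neq 0$, since $k$ is a complex concentrated in a single degree. Thus $\pi_1\mathrm{pic}$ is the constant coefficient system with value $k^\times$, with transition maps the identity because restriction preserves the unit. Moreover $\pi_q\mathrm{pic}=0$ for $q\ge2$. I expect this vanishing to be the key point to check carefully.

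With $\pi_q\mathrm{pic}=0$ for $q\ge 2$, the $E_2$-page is concentrated in the rows $q=0$ and $q=1$. It is a standard two-row spectral sequence: the only possible differential is $d_2\colon E_2^{p,0}\to E_2^{p+2,1}$, and $E_3=E_\infty$. In total degree $0$ the filtration therefore has exactly two pieces. The subgroup is $E_\infty^{1,1}=E_2^{1,1}=H^1(\Orb_{G,\fin};\pi_1\mathrm{pic})$, which no differential can hit or leave. The quotient is $E_\infty^{0,0}=\ker\bigl(d_2\colon E_2^{0,0}\to E_2^{2,1}=H^2(\Orb_{G,\fin};k^\times)\bigr)$. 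Splicing these gives
\[
0\to H^1(\Orb_{G,\fin};\pi_1\mathrm{pic})\to \mathbf{Pic}(\DPerm^{\fin}(G,k))\to H^0(\Orb_{G,\fin};\pi_0\mathrm{pic})\xrightarrow{d_2} H^2(\Orb_{G,\fin};k^\times),
\]
which is exactly the claimed sequence.

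The main obstacle is bookkeeping. First, one must make sure the indexing and the convergence statement of \cref{rem-spectral sequence for lim} give a genuine exact sequence and not merely a filtration in which some terms are only determined up to extension. Since $\mathrm{pic}$ is connective and we are in total degree $0$, this should be fine, as the fringe effects of the Bousfield--Kan spectral sequence only occur in negative total degree. Second, one must justify the vanishing of $\pi_{i}\End(\unit)$ for $i\neq0$ directly in the homotopy category. If the two-row collapse turned out to fail, the fallback would be to use the general fact that the first differential out of $E^{0,0}$ is $d_2$ into $E^{2,1}$. This by itself still gives the stated four-term sequence, because any $E_r^{r,r}$ with $r\ge 3$ would be a subquotient of $E_2^{r,r-1}$, which vanishes once $\pi_q\mathrm{pic}=0$ for $q\ge 2$.
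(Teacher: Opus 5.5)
Your proposal is correct and follows the paper's proof. The paper also uses descent via \cref{thm-descent-permutation} and the Bousfield--Kan spectral sequence for $\mathrm{pic}$ of the limit, together with the vanishing of $\pi_q\mathrm{pic}\DPerm^{\fin}_\bullet$ for $q\ge 2$ (the unit $k$ has no nonzero maps in nonzero degrees in the homotopy category), which leaves $d_2\colon E_2^{0,0}\to E_2^{2,1}$ as the only relevant differential. Your checks of this vanishing and of the constancy of the $k^\times$ coefficient system are more explicit than the paper's; the closing ``fallback'' is unnecessary, since the two-row collapse already holds, and slightly misindexed, since the targets of $d_r$ out of $E_r^{0,0}$ are $E_r^{r,r-1}$.
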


\begin{proof}
    By \cref{thm-descent-permutation} we have that 
    \[
    \DPerm^{\fin}(G,k)\simeq \lim_{\Orb_{G,\fin}^{\opname}}\DPerm^{\fin}_\bullet
    \]
    and hence we obtain a spectral sequence which computes the Picard group of $\DPerm^{\fin}(G,k)$; see \cref{rem-spectral sequence for lim}. Then result now follow by noticing that the presheaf 
    \[
    \pi_j \mathrm{pic}\DPerm^{\fin}_\bullet\colon \Orb_{G,\fin}^{\opname} \to \mathrm{Ab}
    \]
    is trivial for all $j>1$. Indeed, notice that there are no non-trivial higher extensions of the monoidal unit of $\DPerm^{\fin}(H,k)$ for any finite subgroup $H\subseteq G$.  Hence the only possible non trivial differential that is relevant for the Picard group is $d^{0,0}_2\colon E^{0,0}\to E^{2,1}$ which gives  us the result. 
\end{proof}

\begin{Rem}
    We stress that the Picard group of $\DPerm^{\fin}(G,k)$ for $G$ finite and $k$ a field is known; see \cite{Mil25}. Hence the previous result can be used to give  explicit computations in this context as we will see. 
\end{Rem}

\begin{Exa}
    Let $G=\mathrm{SL}_2(\mathbb Z)$, and $k$ be a field. Recall from \cref{Ex-Orb-SL2}, that the category $\mathbf{Orb}_{\mathrm{SL}_2(\mathbb Z),\fin}$ has three elements, namely $\mathrm{SL}_2(\mathbb Z)/C_6$, $\mathrm{SL}_2(\mathbb Z)/C_4$ and $\mathrm{SL}_2(\mathbb Z)/C_2$, and two equivariant maps $\mathrm{SL}_2(\mathbb Z)/C_2\to \mathrm{SL}_2(\mathbb Z)/C_6$ and $\mathrm{SL}_2(\mathbb Z)/C_2\to \mathrm{SL}_2(\mathbb Z)/C_4$. Moreover, the presheaf \[
    \pi_1 \mathrm{pic}\DPerm^{\fin}_\bullet\colon \Orb_{\mathrm{SL}_2(\mathbb Z),\fin}^{\opname} \to \mathrm{Ab}
    \] 
    is constant with value $k^\times$. It follows that  
    \[
    H^i(\Orb_{\mathrm{SL}_2(\mathbb Z),\fin};\pi_1 \mathrm{pic}\DPerm^{\fin}_\bullet)=H^i(|\Orb_{\mathrm{SL}_2(\mathbb Z),\fin}|; k^\times)=0
    \]
    for all $i>0$,  using that the nerve of the orbit category has the homotopy type of a segment. 
    Hence, we obtain an equivalence of abelian groups 
    \[
    \mathbf{Pic}(\DPerm^{\fin}(\mathrm{SL}_2(\mathbb Z),k))\simeq  H^0(\Orb_{\mathrm{SL}_2(\mathbb Z),\fin};\pi_0 \mathrm{pic}\DPerm^{\fin}_\bullet)
    \]
    where the right hand side is just the limit of the presheaf diagram. 
    We can now explicitly compute this limit using  \cite[Theorem 6.5]{miller2024endotrivial} and \cite[Theorem 6.2]{Mil25}: 
    \begin{enumerate}
        \item If $k$ has characteristic coprime to $2$ and $3$, then 
        \[
        \mathbf{Pic}(\DPerm^{\fin}(H,k))\simeq \langle k[1]\rangle\simeq \mathbb{Z}
        \]
        for any finite subgroup $H\subseteq \mathrm{SL}_2(\mathbb Z)$. Hence $\mathbf{Pic}(\DPerm^{\fin}(H,k))\simeq \mathbb Z$. 
        \item If $k$ has characteristic 2, then 
        \[
\mathbf{Pic}( \DPerm^{\fin}(H,k)) \simeq \left\{
        \begin{array}{ll}
           
            \mathbb{Z}^2=\langle k[1], u_2 \rangle & \textrm{ if } H=C_2 \\
            \mathbb{Z}^2=\langle k[1], \mathrm{Infl}_{C_6/C_3}^{C_6} u_2 \rangle & \textrm{ if } H=C_6 \\
             \mathbb{Z}^2=\langle k[1], \mathrm{Infl}_{C_4/C_2}^{C_4} u_2, \rangle & \textrm{ if } H=C_4.
        \end{array}
    \right.
\]
where $u_2$ is the complex of $kC_2$--modules given as  the brutal truncation in degree 1 of the minimal resolution of $k$ by free $kC_2$--modules. Moreover, note that 
\[
\res^{C_6}_{C_2}\mathrm{Infl}_{C_6/C_3}^{C_6} u_2=u_2 \, \mbox{ and } \, \res^{C_4}_{C_2}\mathrm{Infl}_{C_4/C_2}^{C_4} u_2=k[1].
\]
We deduce that $\mathbf{Pic}( \DPerm^{\fin}(\mathrm{SL}_2(\mathbb{Z}),k))\simeq \mathbb{Z}^2$. 
\item If $k$ has characteristic 3, we have that 
   \[
\mathbf{Pic}( \DPerm^{\fin}(H,k)) \simeq \left\{
        \begin{array}{ll}
           
            \mathbb{Z}=\langle k[1]\rangle & \textrm{ if } H=C_2,C_4 \\
             \mathbb{Z}^2\oplus \mathrm{Hom}(C_6,k^\times),  & \textrm{ if } H=C_6.
        \end{array}
    \right.
\]
where the generators of the torsion-free part in the last case correspond to $\langle k[1], \mathrm{Infl}_{C_3}^{C_6} u_3\rangle$ with $u_3$ the complex of $kC_3$--modules given as  the brutal truncation in degree 2 of the standard resolution of $k$ by free $kC_3$--modules and  $\Hom(C_6,k^\times)$ identifies with the one-dimensional representations. 
Note that 
\[
\res^{C_6}_{C_2}\mathrm{Infl}_{C_6/C_2}^{C_6} u_3=k[2].
\]
We deduce that $\mathbf{Pic}( \DPerm^{\fin}(\mathrm{SL}_2(\mathbb{Z}),k))\simeq \mathbb{Z}^2\oplus \mathrm{Hom}(C_6,k^\times)$.
    \end{enumerate}
\end{Exa}

A key input in the previous computation is the fact that the orbit category of  $\mathrm{SL}_2(\mathbb Z)$ for the family of finite subgroups is simply connected. Let us record this observation. 

\begin{Cor}\label{cor-pic for trees}
    Let $G$ be a group with a finite-dimensional model for $\underline{E}G$, and $k$ be a commutative ring. If $\mathrm{Orb}_{G,\fin}$ is acyclic, 
    then there is an isomorphism of abelian groups
    \[
    \mathbf{Pic}(\DPerm^{\fin}(G,k))\simeq \lim_{G/H \in \Orb_{G,\fin}^{\opname}}\mathbf{Pic}(\DPerm^{\fin}(H;k)).
    \]
\end{Cor}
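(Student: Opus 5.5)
The plan is to specialize the exact sequence of \cref{thm-ses for Dperm} and show that both flanking cohomology groups vanish when $\Orb_{G,\fin}$ is acyclic. By \cref{thm-descent-permutation} we have $\DPerm^{\fin}(G,k)\simeq \lim_{\Orb_{G,\fin}^{\opname}} \DPerm^{\fin}_\bullet$. The Bousfield--Kan spectral sequence of \cref{rem-spectral sequence for lim} then gives the exact sequence
\[
0\to H^1(\Orb_{G,\fin};\pi_1\mathrm{pic}\DPerm^{\fin}_\bullet)\to \mathbf{Pic}(\DPerm^{\fin}(G,k))\to H^0(\Orb_{G,\fin};\pi_0\mathrm{pic}\DPerm^{\fin}_\bullet)\to H^2(\Orb_{G,\fin};\pi_1\mathrm{pic}\DPerm^{\fin}_\bullet).
\]
Here $H^0$ of a presheaf on $\Orb_{G,\fin}$ is its limit, namely $\lim_{G/H}\mathbf{Pic}(\DPerm^{\fin}(H,k))$. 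So it suffices to show that $H^1$ and $H^2$ of the coefficient system $\pi_1\mathrm{pic}\DPerm^{\fin}_\bullet$ vanish.

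The key step is to identify this coefficient system as a constant presheaf. For a finite subgroup $H$, we have $\pi_1\mathrm{pic}\DPerm^{\fin}(H,k)=\pi_0\End(\unit)^\times$. The unit is $k$ in degree zero and the category sits inside $\mathbf{K}(\Mod(kH))$, so $\pi_0\End(\unit)=\End_{kH}(k)=k$. The transition maps are restrictions, and these are symmetric monoidal, so they send the unit to the unit and induce the identity on $k^\times$. Conjugation $2$-morphisms act trivially on scalars. Hence the presheaf is the constant presheaf $\underline{k^\times}$.

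For a constant coefficient system, the cohomology of the diagram $\Orb_{G,\fin}^{\opname}$ (higher derived limits) agrees with singular cohomology of the classifying space $|\Orb_{G,\fin}|$ with coefficients in $k^\times$. This uses the standard identification of $\lim^p$ of a constant functor with $H^p(|N I|;A)$, which is the same fact invoked in the $\mathrm{SL}_2(\mathbb Z)$ example. I read ``acyclic'' as $\tilde H^*(|\Orb_{G,\fin}|;\bbZ)=0$. The universal coefficient theorem then gives $H^i(|\Orb_{G,\fin}|;k^\times)=0$ for $i>0$. With $H^1=H^2=0$, the exact sequence collapses to the desired isomorphism.

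The main obstacle is to justify carefully that the coefficient system really is constant, and in particular that the conjugation $2$-cells act trivially on $\pi_1$. I would argue this by noting that they act on $\End(\unit)$ through monoidal natural isomorphisms, and the unit endomorphism ring is central and fixed by such isomorphisms. Beyond this, the proof is a formal consequence of \cref{thm-ses for Dperm}.
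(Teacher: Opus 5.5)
Your proposal is correct and matches the paper's own argument: apply the exact sequence of \cref{thm-ses for Dperm}, identify $\pi_1\mathrm{pic}\DPerm^{\fin}_\bullet$ with the constant presheaf $k^\times$, and use $H^i(\Orb_{G,\fin};k^\times)\cong H^i(|\Orb_{G,\fin}|;k^\times)=0$ for $i=1,2$ to kill both flanking terms. Two small precisions: the transition functors act trivially on $\End(\unit)=k$ because they are $k$-linear (being symmetric monoidal would not by itself force this), and for the universal coefficient step ``acyclic'' should mean $\tilde H_*(|\Orb_{G,\fin}|;\bbZ)=0$, i.e.\ vanishing integral homology rather than cohomology.
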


\begin{proof}
    In view of \cref{thm-ses for Dperm} and the definition of $H^0$, we only need to verify that 
     \[
    H^i(\Orb_{G,\fin},\pi_1 \mathrm{pic}\DPerm^{\fin}_\bullet)=0 \; \mbox{ for } i=1,2. 
    \]
    But this follows since the presheaf  \[
    \pi_1 \mathrm{pic}\DPerm^{\fin}_\bullet\colon \Orb_{G,\fin}^{\opname} \to \mathrm{Ab}
    \] 
    is constant with value $k^\times$; and hence  
    \[
     H^i(\Orb_{G,\fin},\pi_1 \mathrm{pic}\DPerm^{\fin}_\bullet)\cong  H^i(|\Orb_{G,\fin}|,k^\times)
    \]
    which is trivial by our assumption.  Then the claim follows. 
\end{proof}

\section{Galois groups and separable algebras}\label{sec-galois}

In this section, we showcase how to use our descent result for the stable module category to compute its Galois group and to classify separable commutative algebras. This is not intended to be a comprehensive discussion, but rather to provide an indication of the types of calculations that can be performed using our methods. 
We start by recalling some background from \cite{Mat16} and \cite{NaumannPol}.
\begin{Rec}\label{rec-galois}
    Let $\cat C$ be a presentably symmetric monoidal stable $\infty$-category. Associated to $\cat C$ there are categories 
    \[
    \CAlg^{\mathrm{cov}}(\cat C) \subseteq \CAlg^{\mathrm{wcov}}(\cat C) \subseteq \CAlg^{\mathrm{sep},\mathrm{f}}(\cat C^{\dual})
    \]
    of finite covers, weak finite covers, and separable algebras of finite degree with underlying dualizable object. 
    If $\pi_0(\unit_{\cat C})$ is indecomposable, then the categories of (weak) finite covers are governed by profinite groups $\pi_{\leq 1}(\cat C)$ and $\pi_{\leq 1}^{\mathrm{weak}}(\cat C)$ in the sense that there are equivalences
    \[
    \CAlg^\mathrm{cov}(\cat C)^{\opname}= \Fin_{\pi_1(\cat C)}^{\mathrm{cts}} \quad \mathrm{and}\quad  \CAlg^\mathrm{wcov}(\cat C)^{\opname}= \Fin_{\pi_1^{\mathrm{wcov}}(\cat C)}^{\mathrm{cts}}.
    \]
    These profinite groups are refer to as the Galois group and the weak Galois groups of $\cat C$.
    We also recall that if $\unit \in \cat C$ is compact, then there is no difference between weak covers and finite covers. We can use our descent result to access these categories because the functors 
    \[
    \CAlg^{\mathrm{wcov}}(-) \quad \mathrm{and}\quad \CAlg^{\mathrm{sep}}((-)^{\dual})
    \]
    commutes with all limits. 
\end{Rec}
\begin{Exa}
    Let $k$ be a field of char $p$ and let $G$ a group of type $\Phi_k$. In $\mathbf{StMod}(kG)$ we can identify $\pi_0(\unit)$ with the Ikenaga-Farrel-Tate cohomology $\widehat{H}^0(G;k)$. This is in general not indecomposable, but it is in many cases of interest. For instance it is for groups such as $\mathrm{SL}_2(\Z)$ and $\Z/p^\infty$ by \cite[Theorem 5.5]{mazza2019stable}. 
\end{Exa}

\begin{Exa}
    Consider the group $G=\mathrm{SL}_2(\Z)=C_6 \ast_{C_2} C_4$. By \cref{Ex-stmod-trees} and \cref{Ex-Orb-SL2}, there is a pullback 
    \[
    \begin{tikzcd}
         \mathbf{StMod}(k \mathrm{SL}_2(\mathbb Z)) \arrow[r] \arrow[d] & \mathbf{StMod}(k C_6) \arrow[d]\\
         \mathbf{StMod}(kC_4) \arrow[r] & \mathbf{StMod}(k C_2).
    \end{tikzcd}
    \]
    Over a separably closed field $k$ of char $p$, we can apply \cite[Proposition 14.11]{NaumannPol} and deduce
    \begin{align*}
            \CAlg^{\mathrm{cov}}(\mathbf{StMod}(k C_6))&=\CAlg^{\mathrm{sep},\mathrm{f}}(\mathbf{StMod}(k C_6)^{\dual})\\
            \CAlg^{\mathrm{cov}}(\mathbf{StMod}(k C_4))&=\CAlg^{\mathrm{sep},\mathrm{f}}(\mathbf{StMod}(k C_4)^{\dual})\\
            \CAlg^{\mathrm{cov}}(\mathbf{StMod}(k C_2))&=\CAlg^{\mathrm{sep},\mathrm{f}}(\mathbf{StMod}(k C_2)^{\dual}).\\
        \end{align*}
    Note that there is no difference between finite covers and separable commutative algebras. Since this condition is stable under pullback, we get that 
    \[
    \CAlg^{\mathrm{cov}}( \mathbf{StMod}(k G))=\CAlg^{\mathrm{sep},\mathrm{f}}( \mathbf{StMod}(kG)^{\dual}).
    \]
    Therefore, we just need to classify the finite covers:
    \begin{enumerate}
        \item If char $k=3$, then $\mathbf{StMod}(kC_4)=\mathbf{StMod}(kC_2)=0$ from which we conclude that 
        \[
        \CAlg^{\mathrm{cov}}( \mathbf{StMod}(kG)))=\CAlg^{\mathrm{cov}}(\mathbf{StMod}(k C_6))=\Fin_{C_2}
        \]
        using \cite[Theorem 14.14]{NaumannPol}. In other words, the Galois group is $C_2$. 
        \item If char $k=2$, then using \cite[Theorem 14.14]{NaumannPol} again we find that 
        \begin{align*}
            \CAlg^{\mathrm{cov}}(\mathbf{StMod}(k C_6))& =\Fin_{C_3}\\
            \CAlg^{\mathrm{cov}}(\mathbf{StMod}(k C_4))& =\Fin_{C_2}\\
            \CAlg^{\mathrm{cov}}(\mathbf{StMod}(k C_2))&=\Fin\\
        \end{align*}
        Since taking finite covers preserves finite limits, we find that 
        \[
        \CAlg^{\mathrm{cov}}( \mathbf{StMod}(k G))=\Fin_{C_3} \times_{\Fin} \Fin_{C_2}.
        \]
        By the Galois correspondence \cite[Theorem 5.36 ]{Mat16}, we can translate this pullback to a pushout of profinite groups $C_3 \cup_1 C_2$: this is the profinite completion of $\mathrm{PSL}_2(\Z)=C_2 \ast C_3$. Applying again the Galois correspondence, we find that 
         \[
         \CAlg^{\mathrm{cov}}( \mathbf{StMod}(k G))^{\opname}=\Fin^{\mathrm{cts}}_{\widehat{\mathrm{PSL}_2(\Z)}}.
         \]
    \end{enumerate}
\end{Exa}

\begin{Exa}
    Let $k$ be a separably closed field of characteristic $p$ and consider the group $\Z/p^\infty$. Then by \cref{Ex-stmod-trees} we can write
    \[
    \mathbf{StMod}(k\Z/p^\infty )=\lim_n \mathbf{StMod}(k\Z/p^n ).
    \]
    For the terms in the limits there is no difference between weak finite covers and separable algebras of finite degree with dualizable underlying object (see \cite[Proposition 14.11]{NaumannPol}), so the same will hold for the limit (see \cite[Theorem 9.7]{NaumannPol}). Moreover we know by \cite[Theorem 9.18]{Mat16} that the weak Galois group of $\mathbf{StMod}(k\Z/p^n )$ is given by $\Z/p^{n-1}$. By \cite[Corollary 7.3]{Mat16}  the weak Galois group of $\mathbf{StMod}(k\Z/p^\infty )$ is the colimit in profinite groups of the sequence:
    \[
    \Z/p \to \Z/p^2 \to \Z/p^3 \to \ldots
    \]
    that agrees with the profinite completion of the Pr\"ufer group $\widehat{\Z/p^\infty}$, that is zero! It then follows that the Galois group is also trivial. Finally, we conclude that the only separable dualizable algebras of finite degree are the finite products of the unit.  
\end{Exa}

\bibliographystyle{alpha}
\bibliography{reference}

\end{document}